\documentclass[letterpaper, DIV=17]{scrartcl}

\usepackage{amssymb, amsthm, mathtools,bbm}
\usepackage[square,sort,comma,numbers]{natbib}
\usepackage{comment}

\usepackage[none]{hyphenat}
\usepackage[english]{babel}
\usepackage{enumerate}
\usepackage{graphicx}%
\usepackage{subcaption}
\usepackage{dsfont}%
\usepackage{float}%
\usepackage{hyperref}
\usepackage{color}
\usepackage[utf8]{inputenc}
\usepackage{graphicx}
\usepackage{verbatim} 
	\usepackage{hyperref}
	\usepackage{comment}
	\usepackage{xfrac} 
	\usepackage{tensor} 
	\usepackage{listings}  

	\usepackage{pgfplots}              
	\pgfplotsset{compat=newest}
	\usepgfplotslibrary{fillbetween}

	\usepackage[title]{appendix}

	\usepackage[shortcuts]{extdash}  
	
	\newtheorem{theorem}{Theorem}[section]  
	\newtheorem{cor}[theorem]{Corollary}
	\newtheorem{prop}[theorem]{Proposition}
	\newtheorem{lemma}[theorem]{Lemma}
	\newtheorem{conjecture}[theorem]{Conjecture}

	\numberwithin{equation}{section}
	\theoremstyle{definition}

	\newcommand{\E}{\mathbb E}
	\newcommand{\Prob}{\mathbb P}
	
	\newcommand{\N}{\mathbb N}
	
	\newcommand{\Z}{\mathbb Z}
	\newcommand{\R}{\mathbb R}

	\newcommand{\dist}{\operatorname{dist}}
	\newcommand{\distZ}[1]{\left\|#1\right\|}

\begin{document}
		\title{Digit Mixing under Polynomial Maps}
		
		\author{Chokri Manai$^{1}$ \\
			\small $^1$ Courant Institute, New York University, USA \\[-.5ex]
			}
		
		\date{\small \today \\[-.5cm]}
		\maketitle		
		
		\begin{abstract}
		Let $X=\sum_{n\geq1}\xi_n2^{-n} $ be a random number where we model  the digits $\xi_n$ as independent Bernoulli random variables with possibly non-identical parameters $p_n=\mathbb{P}(\xi_n=1)$.  For any polynomial $P\in\mathbb{R}[X]$ with degree $d\geq2$, we prove almost sure absolute normality of $P(X)$ under the condition $p_n(1-p_n)\geq (\log n)^{\Gamma(d)} n^{-(d-1)/d}$ for a suitable constant $\Gamma(d)$ depending only on the degree $d$. Our analysis reveals the sharp power law $n^{-(d-1)/d}$, which is suggested by an elementary heuristics regarding carrier interactions. Our results establish a transition as we further show the pure critical power law is insufficient, but the precise critical window remains an interesting open problem. As far as we know, this is the first sharp result on digit mixing. We complement our main results by structurally convenient summability criteria, which turns out to be sharp at least for $X^2$, and we formulate a more general conjecture for higher degrees.  Our  proofs rely on Fourier decay estimates which we obtain by probabilistic argument involving conditioning and non-resonancy estimates combined with a subtle triangularization argument. \\
			\noindent
			{\small \textbf{Keywords:} Digit mixing,  normal numbers, Fourier decay estimates  \\
				\noindent
				\textbf{MSC:}  11K16; 11K06; 42A38; 60F15}
		\end{abstract}
		
		\section{Introduction}
		
		A real number $y$ is normal in an integer base $b\ge2$ if every block of $k$ base-$b$ digits occurs in the expansion of $y$ with asymptotic frequency $b^{-k}$. It is absolutely normal if it is normal in every integer base.  Borel's classic theorem states that   (Lebesgue-)almost every real number is absolutely normal, but proving normality for explicitly given numbers remains notoriously difficult \cite{Borel09}.  In particular, Borel's intriguing conjecture that every irrational algebraic number is normal remains widely open. We find it a fruitful research direction to establish a probabilistic version of Borel's conjecture. Consider a random number $X$ which is "almost" an integer in the sense that non-zero digits are very rare. Borel's conjecture suggests that for example $\sqrt{X}$ should still be almost surely normal. Heuristically, we expect rapid digit mixing under algebraic roots which ``forget"" the rigid integer input. In this work, we restrict ourselves to the simpler, but already very rich, case of polynomial maps for which we give a rigorous foundation of the digit mixing phenomenon.
		
		We consider the following probabilistic model. Let
		\begin{equation} X=\sum_{n\ge1}\xi_n2^{-n}
		\end{equation}
		where the digits \(\xi_n\) are independent Bernoulli random variables with possibly non-identical parameters \(p_n=\Prob(\xi_n=1)\). For convenience, our probabilistic model starts from a binary expansion, but our method applies to analogous construction in different bases, too. Moreover our analysis translates to the bulk digit distribution  of large random integers.  Note that if the digits are asymptotically biased $\lim_{N \to \infty} \frac1N \sum_{n = 1}^{N} p_n \neq \frac12$, then $X$ is not normal in base 2. Nevertheless nonlinear operations should create carries and mix the digits.
		For a polynomial $P$ of degree $d \geq 2$, one might expect that the leading term 
		$$ X^d = \sum_{n = 1}^{\infty} \sum_{\substack{1\leq i_1, \cdots, i_d\\ i_1 + \cdots + i_d=n}} \xi_{i_1} \cdots\xi_{i_d} 2^{-n} $$
		dominates the mixing behavior.  $X^d$ can only be normal, if the $n$-level interaction $\sum_{\substack{1\leq i_1, \cdots, i_d\\ i_1 + \cdots + i_d=n}} \xi_{i_1} \cdots\xi_{i_d}$ is typically non-zero. Since there are order $n^{d-1}$ terms, this suggests that a transition occurs at the power law $p_n \sim n^{-\frac{d-1}{d}}$. Our main results below, Theorem~\ref{thm:main} and Theorem~\ref{thm:necessary}, largely confirm this picture. 
		
		Let us introduce $v_n := p_n(1-p_n) \in [0, 1/4]$ measuring the randomness of the $n$-th digit.

		\begin{theorem}
			\label{thm:main}
			Let \(P\in\R[x]\) have degree \(d\ge2\).
			There is a constant $\Gamma_0=\Gamma_0(d)$ only depending on the degree of $P$ such that the following holds. Suppose that for all sufficiently large $n$,
			\begin{equation}\label{eq:sharp}
				v_n\ge (\log n)^{\Gamma_0} n^{-(d-1)/d}.
			\end{equation}
			Then $P(X)$ is  almost surely absolutely normal.
		\end{theorem}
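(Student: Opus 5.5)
We reduce absolute normality to an exponential-sum estimate via the Davenport--Erd\H{o}s--LeVeque criterion. Fix a base $b\ge2$ and a nonzero integer $h$. It suffices to show that for $Y=P(X)$ we have
\[
\sum_{N\ge1}\frac1N\,\E\Bigl|\frac1N\sum_{m=1}^N e(hb^mY)\Bigr|^2<\infty .
\]
This follows from a Fourier decay bound of the form
\[
\bigl|\E\, e(\lambda P(X))\bigr|\le C(\log|\lambda|)^{-1-\delta}
\]
for large $|\lambda|$, applied with $\lambda=h(b^m-b^{m'})$. Expanding the square gives
\[
\frac1{N^2}\sum_{m,m'}\E\, e\bigl(h(b^m-b^{m'})P(X)\bigr),
\]
and the diagonal contributes $1/N$. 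A decay rate of $(\log|\lambda|)^{-1-\delta}\approx |m-m'|^{-1-\delta}$ makes the off-diagonal sum $O(N^{1-\delta'})$ or better, so the DEL series converges. Countably many pairs $(b,h)$ then give almost sure absolute normality. The whole task is therefore a uniform Fourier decay estimate for $P(X)$ at frequency $\lambda$, with polylogarithmic, or even better, decay in $\log\lambda$.

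To get the decay, we scale $\lambda\approx 2^{L}$ and use conditioning. Split the digits into the block $I=\{n:n\le L/d\}$ of "low" indices and the rest. The idea is to freeze all digits except a sparse random family. Write $X=A+\sum_{j\in J}\xi_j2^{-j}$, where $J$ is a chosen set of indices near scale $L/d$ and $A$ is independent of $(\xi_j)_{j\in J}$. Expanding $\lambda P(X)$, the relevant phase modulo $1$ is governed by the degree-$d$ multilinear terms
\[
\lambda\, c_d\sum_{j_1,\dots,j_d\in J}\xi_{j_1}\cdots\xi_{j_d}\,2^{-(j_1+\cdots+j_d)},
\]
which have magnitude order one exactly when $j_1+\dots+j_d\approx L$, together with lower-order multilinear terms carrying coefficients depending on $A$.

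We then apply a decoupling step: iterated Cauchy--Schwarz (van der Corput / Gowers-type $d$-fold differencing) in the independent variables of $d$ disjoint sub-blocks $J_1,\dots,J_d$. This kills all lower-degree terms and reduces $|\E\, e(\lambda P(X))|^{2^{d-1}}$ to an average of
\[
\prod_{k}\E\, e\bigl(\lambda c_d 2^{-\sum_k j_k}\,\eta_{j_1}\cdots\eta_{j_d}\bigr),
\]
where the $\eta$ are symmetrized digit differences. Each difference is nonzero with probability $2v_j$.

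The non-resonance input is that, for index tuples with $\sum_k j_k$ in a window of width $O(1)$ around $L-\log_2|\lambda c_d|$, the phase $\lambda c_d 2^{-\sum j_k}$ lies at distance $\gg1$ from $\mathbb Z$. Each such active tuple then contributes a factor $1-c\,v_{j_1}\cdots v_{j_d}$. The number of admissible tuples with indices near $n\approx L/d$ is about $n^{d-1}$, so the product is roughly
\[
\exp\bigl(-c\,n^{d-1}v_n^d\bigr)\le \exp\bigl(-c(\log n)^{d\Gamma_0}\bigr),
\]
using $v_n^d\ge(\log n)^{d\Gamma_0}n^{-(d-1)}$. This is far better than needed. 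The logarithmic losses from Cauchy--Schwarz, from choosing windows, and from the differencing are absorbed into $\Gamma_0$.

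The main obstacle is the triangularization: organising the tuples $(j_1,\dots,j_d)$ so that, after conditioning, one can genuinely extract \emph{independent} factors. A product over overlapping tuples is not a product of independent expectations. The plan is to select indices in a staircase pattern. Pick $j_1<\dots$ in disjoint dyadic sub-blocks, and condition on all but one block at a time, so that the phase is linear in the remaining variables with coefficient $\lambda c_d 2^{-j}\prod(\text{frozen } \eta)$. Lower bound the number of frozen configurations that are non-resonant using a second-moment (Paley--Zygmund) count of active tuples. Ensuring that enough of these linear coefficients avoid $\mathbb Z$, uniformly in the unknown fractional structure of $\lambda c_d$, is where care is needed. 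We would handle this by allowing a shift of $O(\log n)$ in the target sum window, which explains the power of $\log n$ in \eqref{eq:sharp}.
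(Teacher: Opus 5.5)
Your Davenport--Erd\H{o}s--LeVeque reduction and the $d$-fold Cauchy--Schwarz differencing, which removes the lower-degree terms, match the paper. The gap is the step you flag as the main obstacle, and the fix you propose does not address it.

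After differencing, the phase is $\sum_{i_1\in J_1,\dots,i_d\in J_d}\lambda c_d2^{-(i_1+\cdots+i_d)}\eta^{(1)}_{i_1}\cdots\eta^{(d)}_{i_d}$. Its expectation does not factor over tuples, as you note, so $\exp(-c\,n^{d-1}v_n^d)$ is only a heuristic. Freeze all groups but one. The coefficient of a remaining variable $\eta^{(d)}_{i}$ is then a sum over all active partial tuples. Those with total sum below $s$ contribute terms of size about $2^{s-(i_1+\cdots+i_{d-1}+i)}\gg1$, and you have no control over their fractional parts. This is the real source of resonance. The fractional structure of $\lambda c_d$ is harmless: choosing $s$ with $|\lambda c_d|2^{-s}\in[1/32,1/4)$ makes the exact-sum coefficient non-resonant automatically. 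Shifting the target window by $O(\log n)$ does nothing about the large cross terms. So the claim that many linear coefficients stay away from $\Z$ is unproved, and the Fourier bound with it.

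The paper resolves this as follows.
\begin{enumerate}
\item It writes $\xi_j=\chi_j\eta_j+(1-\chi_j)\zeta_j$, with sparse selectors $\chi_j$ of parameter $\approx(\log j)^{\Gamma}j^{-(d-1)/d}$ and fair bits $\eta_j$.
\item It builds $L=\lfloor(\log n)^2\rfloor$ blocks of disjoint intervals $I_{j,k}$ of width $H\asymp n/(\log n)^{2R}$, with staggered offsets $kW_j$.
\item A second-moment argument shows that, off an event of probability $O((\log n)^{-M})$, every block contains a fully selected tuple with sum exactly $s$.
\item The offsets force every lexicographically future mixed tuple, $(k_1,\dots,k_{d-1})\succ(k_d,\dots,k_d)$, to have sum $\ge s+c_dH$. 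Those coefficients are $O(2^{-c_dH})$ and are discarded perturbatively.
\item Conditioning on everything except the $dL$ fair bits of the chosen tuples and differencing leaves a triangular form. At level $r$ the coefficient of block $k$ is $R_k+U_{r+1,k}D_k$, where $\|D_k\|\ge\delta$ and $R_k$ contains all the huge past terms but depends only on already revealed variables.
\item One of $R_k$, $R_k\pm D_k$ lies at distance $\ge\delta/3$ from $\Z$. Hence each block survives with conditional probability $\ge1/4$ whatever $R_k$ is. Hoeffding then propagates a positive fraction of good blocks through all levels, giving $e^{-cL}$.
\end{enumerate}
The $(\log n)^{\Gamma_0}$ is spent keeping $\E Y_k\asymp(\log n)^{\Gamma d-2R(d-1)}\to\infty$ despite the shrunken widths $H$.

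This method yields only $(\log n)^{-M}$, with $n\approx\log_b|\lambda|$. That is far weaker than your target $(\log|\lambda|)^{-1-\delta}$, which a Chebyshev or Paley--Zygmund bound on the bad event does not give. It still suffices, because the bound depends only on the larger index. Hence $\E|S_N|^2\ll N+N^2(\log N)^{-M}$, which is summable against $N^{-3}$ once $M>1$. Incidentally, $\log|\lambda|\approx\max(m,m')\log b$, not $|m-m'|$.
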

		
		Very naturally, we need to exclude linear polynomials $ax +b$. The lack of a nonlinear term makes the mixing behavior coefficient dependent and this is a completely different phenomenon. The proof of Theorem~\ref{thm:main} can be found in Section~\ref{sec:proof2}. We rely on the common approach to derive a decay estimate for the Fourier transform of $P(X)$, which implies normality via Weyl's criterion. Compared to analytic methods in the literature, our proof relies on probabilistic ideas which seem to suit this problem particularly well. The main technical challenge is to control the higher dimensional interactions, which are per se highly correlated.  We address this issue by a subtle triangular decomposition. This allows us to proceed via an iterative conditioning argument which reveals the desired Fourier cancellations.  Theorem~\ref{thm:main} meets the expected exponent $-(d-1)/d$, but it is natural to wonder whether the logarithmic term in \eqref{eq:sharp} is also sharp. Our proof requires an exponent $\Gamma_0$ which grows linearly in the degree $d$. This is most likely an artifact of our method. We believe however that a logarithmic correction might be needed.  Our next result shows that  at least $v_n \sim C n^{-\frac{d-1}{d}}$ is not enough to guarantee normality of $P(X)$. Here, we used the common notion $a_n \sim b_n$ if $\lim_{n \to \infty} \frac{a_n}{b_n} = 1.$
		
		\begin{theorem}\label{thm:necessary}
			Let \(P\in\mathbb Q[x]\) be of degree \(d\ge 2\).  If
			\begin{equation}\label{eq:necessary}
				p_n\sim C n^{-(d-1)/d}
				\qquad (0<C<\infty),
			\end{equation}
			then \(P(X)\) is almost surely not normal in base $2$.  In particular,
			\(P(X)\) is almost surely not absolutely normal.
		\end{theorem}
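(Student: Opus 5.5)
Since $P(X)$ normal in base 2 implies $P(X)$ normal in every base $2^k$, and $\mathbb Q$-coefficients can be cleared by multiplying by the integer $D$ (normality is preserved under multiplication by nonzero rationals? — not in general). So I will argue directly: I will show that with positive probability bounded below, in fact almost surely, there are infinitely many long windows of binary digits of $P(X)$ that are essentially deterministic (e.g.\ all zeros or a fixed pattern determined by the rational coefficients). These windows will have positive density along a sparse sequence of scales, contradicting the block frequencies $2^{-k}$.

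Key steps. First, write $DP(x)=\sum_j a_j x^j$ with $a_j\in\mathbb Z$, $D\in\N$. Fix a large scale $N$ and consider the event $E_N$ that $\xi_n=0$ for all $n\in(N,2N]$ — rather, a block $(N, (1+\epsilon)N]$. Its probability is $\prod(1-p_n)\approx\exp(-C\epsilon N^{1/d}d)$, which is too small. So instead I use the interaction heuristic directly: the digit at position $n$ of $X^d$ receives contributions from $d$-tuples summing to $n$. Under $p_n\sim Cn^{-(d-1)/d}$, the number of ones among the first $M$ digits is $\sim dC M^{1/d}$, so the nonzero $d$-tuples with sum in $(M,2M]$ have expected count of order $C^d$, a constant. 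Hence with probability bounded below by some $\delta(C,d)>0$, no $d$-fold product $\xi_{i_1}\cdots\xi_{i_d}$ with all $i_j$ large-ish lands in the window; lower degree terms $x^j$, $j<d$, contribute even fewer (expected $o(1)$ products landing in $(M,2M]$ from indices $\ge$ some cutoff). The point is to split $X=X_{\le L}+X_{>L}$ with $L=\eta M$: $X_{\le L}$ is a dyadic rational with denominator $2^{L}$, so $DP(X_{\le L})$ is a dyadic rational with denominator $2^{dL}$; choosing $\eta<1/d$ its expansion terminates before position $M$. The cross terms involving $X_{>L}$ have leading size governed by the first one after $L$; I'd require the event $F_M$ that $X_{>L}$ has its first one beyond position $2M$ minus the shift, i.e.\ $\xi_n=0$ for $n\in(L,2M]$ — again exponentially small. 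So I must refine: allow ones in $(L,2M]$ but require that all products involving them (times bounded-height $X_{\le L}$ monomials) avoid producing carries into $(M,2M]$ beyond a controlled set — this is the main obstacle.

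A cleaner route, which I will actually pursue: compute the density of zeros in $DP(X)$ digits over $(M,2M]$ conditioned on the event $G_M$ that the total number of ones of $X$ in $(\eta M, 2M]$ is zero for $\eta$ tied to $C$: $\Prob(G_M)\approx\exp(-dC((2M)^{1/d}-(\eta M)^{1/d}))$ — still decays. Hence genuinely the event must be that $\xi$'s in $(\eta M,2M]$ are present (about $M^{1/d}$ of them) but the \emph{expansion} of $DP(X)$ in $(M,2M]$ is then a sum of $O(M^{(d-1)/d})$... I expect the right event: the digits of $X$ beyond $M^{?}$... Given the difficulty, I will target the regime where, on an event of probability $\ge\delta$, the dyadic number $DP(X_{\le M/d})$ has denominator at most $2^{M}$ and the tail $DP(X)-DP(X_{\le M/d})$ is $<2^{-2M}$ up to a bounded number of isolated summands, forcing a run of zeros of length $\gtrsim M/2$ after a few isolated digit disturbances. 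Using independence across scales $M_k=4^k$ and the second Borel–Cantelli lemma (events at different scales nearly independent since determined mostly by disjoint digit blocks), this occurs infinitely often a.s., giving $\limsup$ of zero-frequency in prefixes $>1/2$, contradicting normality. The main obstacle is controlling the tail contribution of ones in $(M/d,2M]$ times the low part: I would handle it by showing the expected number of ones in $(M/d,2M]$ whose products with the $\sim CM^{1/d}$ low ones fall in the window is $O(1)$, so with probability $\ge\delta$ there are none.
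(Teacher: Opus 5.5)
Your plan depends on one claim: with probability bounded below by some $\delta(C,d)>0$, only a bounded number of $d$-fold products of ones (including the cross terms with the low part $X_{\le M/d}$) land in the window $(M,2M]$. That claim is false, and off by a factor of $M$.

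For each \emph{single} position $N$, the expected number of $d$-element sets $E$ of ones with $\sum_{i\in E}i=N$ is $\E A_N=\sum_{E\in\mathcal E_N}\prod_{i\in E}p_i\to\lambda=C^d\Gamma(1/d)^d/d!$. So about $\lambda M$ products land in $(M,2M]$. Your own heuristic says the same thing: the $\sim CdM^{1/d}$ ones form $\asymp M$ distinct $d$-subsets, and a positive proportion of them have sum in $(M,2M]$.

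The split at $L=M/d$ cannot isolate these products. Any $d$-tuple with sum $>M$ has largest index $>M/d$, so \emph{every} hit in the window is one of the cross terms you call the main obstacle. Their expected number is $\Theta(M)$, not $O(1)$. Likewise the degree-$j$ terms with $j<d$ give $\asymp M^{j/d}\to\infty$ hits, not $o(1)$.

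Moreover, $(A_{N+1},\dots,A_{N+K})$ is asymptotically i.i.d.\ Poisson$(\lambda)$. Hence the probability of at most $B$ hits in even a fixed sub-window of length $K$ tends to $\Prob(\mathrm{Poisson}(\lambda K)\le B)$, which is small for large $K$. Your event therefore has probability tending to $0$ (heuristically $e^{-\Theta(M)}$), and there is no $\delta$ to feed into Borel--Cantelli.

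Tuning $\eta$ or the scales will not fix this. At the critical rate, every window of the expansion of $P(X)$ carries a positive density of Poisson noise, so non-normality cannot come from long deterministic blocks. Separately, the ``near independence across scales'' would also fail, since all your events share the low digits $X_{\le M_k/d}$.

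The missing idea is that non-normality here is a \emph{bias} in the local digit statistics, visible in a single Fourier coefficient. The paper proceeds as follows.
\begin{enumerate}
\item It reduces to $Q\in\Z[x]$. Contrary to your hesitation, multiplication by a nonzero rational does preserve normality in a fixed base. In any case, the direction you need follows at once from Weyl's criterion, since $hD\in\Z\setminus\{0\}$.
\item It shows that the lower-degree contributions at positions $N+\ell$ vanish in $L^1$.
\item It shows that the top-degree counts converge jointly to independent Poisson$(\lambda)$ variables, via factorial moments with overlapping configurations negligible.
\item It concludes that $2^NQ(X)\bmod 1$ converges in law to $a_dd!\sum_{\ell\ge1}2^{-\ell}Z_\ell\bmod1$. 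Hence $\E e(2^NQ(X))\to L_Q=\prod_{\ell}\exp(\lambda(e(a_dd!2^{-\ell})-1))$ with $0<|L_Q|<1$.
\item A nonzero limit only rules out \emph{almost sure} normality, so the paper repeats the computation conditionally on $\mathcal F_m=\sigma(\xi_1,\dots,\xi_m)$. The conditioned polynomial $2^{md}Q((K+z)2^{-m})$ is again integral with leading coefficient $a_d$, and the tail digits still satisfy $p_{m+n}\sim Cn^{-(d-1)/d}$. This gives $\Prob(\mathcal N_2\mid\mathcal F_m)\le1-|L_Q|$ for every $m$.
\item That bound contradicts L\'evy's $\Prob(\mathcal N_2\mid\mathcal F_m)\to\mathbf 1_{\mathcal N_2}$ unless $\Prob(\mathcal N_2)=0$.
\end{enumerate}
Any salvage of your approach, via fixed-length blocks occurring with the wrong frequency, would need exactly this Poisson limit.
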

		
		We suppose the exact asymptotics $	p_n\sim C n^{-(d-1)/d}$ to simplify proof in Section~\ref{sec:proof3}. However, one can easily show that this extends to all sequences for which $v_n \leq C n^{-(d-1)/d}$ holds for sufficiently large $n$.  Note that we only consider rational polynomials. The reason is simple. Given a real number $x$, then for Lebesgue a.e. tuples of polynomial coefficients the number $P(x)$ is absolutely normal. For any fixed sequence of probabilities $p_n$, Fubini's theorem implies that $P(X)$ is almost surely normal for Lebesgue-a.e. polynomial $P$. Hence the exceptional set of polynomials will always be small and Theorem~\ref{thm:main} can be seen as lift of the a.e. statement to an assertion on all (and in particular rational) polynomials. 
		
		It seems to be an interesting but difficult problem to better understand  the critical window for the decay of $p_n$ for which $P(X)$ undergoes a transition from a non-normal number to an absolutely normal number. As a first step, one would need to understand the optimal decay rate for $p_n$ which turns $P(X)$ to an almost surely normal number. In a second step, one might attempt to find nontrivial digit distributions in the critical window. Perhaps closer to the original motivation of this work, is to consider extensions of Theorem~\ref{thm:main} to analytic maps. Very naively, this corresponds to $d = \infty$ and Theorem~\ref{thm:main} suggests (up to corrections) a power law $n^{-1}$ as threshold for $p_n$. Note that this cannot be further improved as $p_n \sim n^{-r}$ for $r > 1$ is summable and, thus, $X$ would be almost surely rational. Our current methods are not strong enough to cope with general analytic maps. The class of analytic maps is quite rich and it is unclear to us if, say, $\sqrt{\cdot}$ and $\exp(\cdot)$ showcase the same mixing behavior. We consider this of fundamental interest as a difference could point to different levels of difficulty in attempting a normality proof of $e$ and $\sqrt{2}$.

		\subsection{A summability criterion}
		
	   We want to discuss a further, arguably more elegant, approach based on a sufficient summability criterion.  Let us consider first the special case $X^2$, which is of particular interest.  Our second sufficient criterion for digit mixing of $X^2$ is based on the simple interaction function 
		\begin{equation}\label{eq:R2def}
			R_2(s) :=\sum_{\substack{1\le i<j\\ i+j=s}}v_iv_j.
		\end{equation}
		If there are no such pairs, the sum is interpreted as $0$.  One can think of $R_2(s)$  to measure the carrier mixing around the binary position $s$. Following this vague idea, large values of $R_2(s)$ should lead to the normality of $X^2$. This is encoded in the following summability criterion.
		\begin{theorem}\label{thm:quadratic}
			There is an absolute constant \(c_2>0\) with the following property.  If
			\begin{equation}\label{eq:R2summability}
				\sum_{s\ge1}s\exp(-c_2R_2(s))<\infty,
			\end{equation}
			then \(X^2\) is almost surely absolutely normal.
		\end{theorem}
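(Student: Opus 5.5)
We will use Weyl's criterion in the probabilistic form due to Davenport--Erdős--LeVeque. It suffices to show, for every integer base $b\ge2$ and every nonzero integer $h$, that
\[
\sum_{N\ge1}\frac1N\,\E\Bigl|\frac1N\sum_{m=1}^{N}e(hb^mX^2)\Bigr|^2<\infty,
\]
where $e(t)=e^{2\pi i t}$. Expanding the square, this reduces to a uniform bound on the Fourier transform $\phi(t):=|\E e(tX^2)|$ at the frequencies $t=h(b^m-b^{m'})$. Concretely, I aim for a decay estimate of the form
\[
\phi(t)\le C\exp\bigl(-c\,R_2(s(t))\bigr)+(\text{summable error}),\qquad s(t)\approx \log_2|t|,
\]
possibly after averaging over a bounded window of $s$. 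The weight $s$ in \eqref{eq:R2summability} should then be exactly what is needed: there are $O(s)$ pairs $(m,m')$ with $\log_2|h(b^m-b^{m'})|\approx s$, which pays for the double sum in the variance.

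To obtain the decay of $\phi$, I will write $X^2=\sum_i\xi_i4^{-i}+2\sum_{i<j}\xi_i\xi_j2^{-i-j}$ and focus on the cross terms with $i+j=s$, where $2^s\approx|t|$. I will use a conditioning and pairing device. Since a pair $(i,j)$ with $i+j=s$ interacts with other pairs through shared indices, I will select a family of disjoint pairs $(i_k,j_k)$, for instance with $i_k<s/2<j_k$, each index used once. I then condition on all digits outside the selected ones, and in addition on the $j_k$'s, using only the randomness of the $\xi_{i_k}$. Given this conditioning, $tX^2$ is a sum of terms $\xi_{i_k}\alpha_k$ plus a constant. Here
\[
\alpha_k=t\bigl(4^{-i_k}+2^{1-i_k}Y_k\bigr),
\]
with $Y_k=\sum_{j\ne i_k}\xi_j2^{-j}$, and this is not affected by the other $\xi_{i_l}$ except through cross terms $\xi_{i_k}\xi_{i_l}$. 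That is why a triangular or decoupling step is needed: the simplest route is to take the $i_k$ sparse, so that the cross terms $2t\,2^{-i_k-i_l}$ are tiny (for $i_k+i_l\ll s$) and contribute only an $O(2^{s-i_k-i_l})$ phase error. Independence then gives
\[
\phi(t)\lesssim \E\prod_k\bigl|1-p_{i_k}+p_{i_k}e(\alpha_k)\bigr|\le \E\prod_k\exp\bigl(-c\,v_{i_k}\|\alpha_k\|^2\bigr).
\]
The key non-resonance input is that $\alpha_k$ mod 1 contains the term $t2^{1-i_k}\xi_{j_k}2^{-j_k}\approx\xi_{j_k}\cdot\Theta(1)$, so the randomness of $\xi_{j_k}$ makes $\|\alpha_k\|\ge c$ with conditional probability at least $c\,v_{j_k}$, independently across $k$.

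This gives $\phi(t)\le\E\exp(-c\sum_k v_{i_k}B_k)$ with $B_k$ independent Bernoulli variables of mean at least $c\,v_{j_k}$. Using $\E e^{-\lambda B}\le \exp(-(1-e^{-\lambda})\E B)$, this is at most $\exp(-c'\sum_k v_{i_k}v_{j_k})$. Summing the contributions from the $O(1)$ shifts $s'\in[s-C,s+C]$ with $2^{s'}\asymp|t|$ (to handle the precise location of $|t|$ relative to powers of two), and using disjointness, the pairs $\{i+j=s, i<j\}$ are all captured. This yields $\phi(t)\le C\exp(-c_2R_2(s))$, up to the error from the sparsification.

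The main obstacle is the second step: making the decoupling rigorous without losing a constant fraction of $R_2(s)$. Sparsifying the $i_k$ discards pairs. The remedy I would try first is to split the indices $i<s/2$ into $O(1)$ residue classes modulo a large constant $L$, so that within a class $i_k+i_l$ differs from $s$ by at least $L$ and the cross-term phases are at most $2^{-L}$. By pigeonhole, one class carries at least $R_2(s)/L$ of the mass, and this loss is absorbed into $c_2$. What remains to be checked is that the neglected terms ($i+j$ far from $s$, and the low digits) only shift $\alpha_k$ by a conditionally fixed amount, which the non-resonance argument tolerates because it uses only the randomness of $\xi_{j_k}$.
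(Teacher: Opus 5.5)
There is a genuine gap in the decoupling step, caused by a sign error. With $|t|\asymp 2^{s}$, the coefficient of $\xi_{i_k}\xi_{i_l}$ in $tX^2$ is $2t\,2^{-i_k-i_l}$, of size $\asymp 2^{s-i_k-i_l}$. For two small indices $i_k,i_l<s/2$ you have $i_k+i_l<s$. So this coefficient is not a tiny phase: it is at least of order one, typically enormous, and essentially arbitrary modulo $1$.

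After your conditioning, the exponent is therefore a genuinely quadratic form $\sum_k\alpha_k\xi_{i_k}+\sum_{k<l}2t2^{-i_k-i_l}\xi_{i_k}\xi_{i_l}$. Its characteristic function does not factor into $\prod_k|1-p_{i_k}+p_{i_k}e(\alpha_k)|$.

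The residue-class remedy does not repair this. A residue class mod $L$ forces $|i_k-i_l|\ge L$, not $|i_k+i_l-s|\ge L$. And since $i_k+i_l<s$, the phases are large, not $\le 2^{-L}$.

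Swapping roles, i.e.\ keeping the large digits $\xi_{j_k}$ random, does make their self-interactions $2t2^{-j_k-j_l}$ genuinely small. But treated as a pointwise perturbation they only give $|\E e(tX^2)|\le e^{-cR_2(s)/L}+C2^{-L}$. That additive error does not decay in $s$, and letting $L=L(s)\to\infty$ costs a factor $L(s)$ in the exponent, so you no longer reach $\sum_s s\exp(-c_2R_2(s))<\infty$.

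A secondary inaccuracy: the events $\{\|\alpha_k\|\ge c\}$ are not independent across $k$. Indeed $\alpha_k$ contains $D2^{i_l-i_k}\xi_{j_l}$ (with $D=2t2^{-s}$) for every $l$, and this term is large whenever $i_l>i_k$. The structure is triangular, not independent.

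The missing idea is the differencing step used in the paper.
\begin{enumerate}
\item Condition on all digits outside one residue class of pairs and write $tX^2=\phi(x)+\psi(y)+\sum_{r,k}D2^{i_k-i_r}x_ry_k$ with $x_r=\xi_{i_r}$, $y_k=\xi_{j_k}$.
\item Apply Jensen in $y$, then expand $|\E_x(\cdot)|^2$ with an independent copy $x'$. This removes $\phi$ and $\psi$, i.e.\ all self-interactions among small digits and among large digits. What remains is a phase linear in $y$ with coefficients $H_k(\Delta)=\sum_rD2^{i_k-i_r}\Delta_r$, where $\Delta=x-x'$, so the $y$-average factorizes exactly.
\item The coupling between pairs is now triangular. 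Terms with $i_r<i_k$ are large but measurable with respect to $\Delta_1,\dots,\Delta_{k-1}$. Terms with $i_r>i_k$ have total size at most $1/64$ thanks to the residue-class separation.
\item Crucially, these small terms are absorbed inside $\|H_k\|$, reducing the margin from $1/16$ to $1/32$, rather than producing an additive error. Iterated conditioning with $\mathbb P(\Delta_k=1)=\mathbb P(\Delta_k=-1)=v_{i_k}$ then yields $\exp(-c\sum_k v_{i_k}v_{j_k})\le\exp(-cR_2(s)/K)$ with $K$ an absolute constant.
\end{enumerate}
Your outer reduction (the Davenport--Erd\H{o}s--LeVeque second-moment argument and the $O(s)$ count of frequencies per dyadic scale) is fine; it is the Fourier estimate that needs this repair.
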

		
		The proof of Theorem~\ref{thm:quadratic} can be found in Section~\ref{sec:proof1}. It is based on similar ideas as our argument for Theorem~\ref{thm:main}, but it is much shorter and simpler. We invite the reader to first consult this proof as it displays the main ideas in a very clear way. Theorem~\ref{thm:quadratic} covers a wide range of decaying probabilities $p_n$ under which normality of $X^2$ is  guaranteed. In particular, it gives an alternative route to Theorem~\ref{thm:main} for $P(X) = X^2$
		
		\begin{cor}\label{cor:quadratic-log}
			There exists an absolute constant \(A_0>0\) such that if, for all sufficiently large \(n\),
			\begin{equation}\label{eq:quadlogcondition}
				v_n\ge A\left(\frac{\log n}{n}\right)^{1/2}
			\end{equation}
			with \(A\ge A_0\), then \(X^2\) is almost surely absolutely normal.
		\end{cor}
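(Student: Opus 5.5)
The plan is to derive Corollary~\ref{cor:quadratic-log} directly from Theorem~\ref{thm:quadratic}. It suffices to bound $R_2(s)$ from below and check that \eqref{eq:R2summability} holds once $A$ is large.

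By hypothesis there is $n_0$ with $v_n\ge A(\log n/n)^{1/2}$ for all $n\ge n_0$. For large $s$, restrict the sum defining $R_2(s)$ to the pairs $i<j$, $i+j=s$, with $s/4\le i< s/2$. For such pairs both $i$ and $j=s-i$ lie in $[s/4,3s/4]$, so they exceed $n_0$ once $s\ge 4n_0+4$.

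For these $i,j$ we then have
\begin{equation*}
v_iv_j\ge A^2\frac{\sqrt{\log i\,\log j}}{\sqrt{ij}}\ge A^2\frac{\log(s/4)}{\sqrt{ij}}\ge A^2\frac{\log(s/4)}{s/2},
\end{equation*}
using $ij\le s^2/4$. The number of admissible $i$ is at least $s/4-2$, so
\begin{equation*}
R_2(s)\ge \Bigl(\frac s4-2\Bigr)\cdot \frac{2A^2\log(s/4)}{s}\ge \frac{A^2}{4}\log s
\end{equation*}
for all sufficiently large $s$. The last step absorbs $\log(s/4)$ versus $\log s$ and the $-2$ into the constant; for instance $(1/2-4/s)\log(s/4)\ge \frac14\log s$ for large $s$.

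Consequently $s\exp(-c_2R_2(s))\le s^{1-c_2A^2/4}$ for large $s$. This is summable as soon as $c_2A^2/4>2$, that is, as soon as $A> A_0:=\sqrt{8/c_2}$; to allow $A=A_0$ one may set $A_0:=\sqrt{9/c_2}$. The finitely many small $s$ contribute a finite amount, since $R_2(s)\ge0$. Hence \eqref{eq:R2summability} holds, and Theorem~\ref{thm:quadratic} gives almost sure absolute normality of $X^2$. Since $A_0$ depends only on $c_2$, it is absolute.

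There is no real obstacle here. The only point requiring care is keeping the constants uniform: the lower bound on $R_2(s)$ must be $\gtrsim A^2\log s$ with an absolute implied constant, so that $A_0$ does not depend on $n_0$ or on the sequence $(p_n)$. The dependence on $n_0$ only affects finitely many terms of the series.
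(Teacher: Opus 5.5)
Your proof is correct and follows essentially the paper's route. The paper obtains this as the $d=2$ case of Corollary~\ref{cor:polynomial-log}: it uses Lemma~\ref{lem:manytuples} to find $\gtrsim s$ disjoint pairs with indices comparable to $s$, deduces $R_2'(s)=R_2(s)\gtrsim A^2\log s$, and concludes by summability. Your direct count of the pairs with $s/4\le i<s/2$ is a concrete instance of the same argument, your appeal to Theorem~\ref{thm:quadratic} instead of the merely sketched Proposition~\ref{thm:polynomial} is if anything cleaner, and your constants check out.
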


		The quadratic case is special in the sense that we succeeded in formulating a clean and sharp criterion. The main advantage compared to Theorem~\ref{thm:main} is that we do not need strict pointwise bounds on $v_n$ For higher degree polynomials, the situation is more involved. The most natural generalization of Theorem~\ref{thm:quadratic} to more general polynomials of degree $d \geq 2$ would consider the function
		$$R_d(s) :=\sum_{\substack{1\leq i_1 < \cdots < i_d\\ i_1 + \cdots + i_d=s}}v_{i_1} \cdots v_{i_d} $$
		and a sufficient criterion should be $\sum_{s\ge1}s\exp(-c_dR_d(s))<\infty$. We formulate this as the following conjecture.
		
		\begin{conjecture}\label{con:summability}
			 Let \(P\in\R[x]\) have degree \(d\ge2\). There is an absolute constant \(c_d>0\), depending only on the degree $d$, with the following property.  If
			\begin{equation}\label{eq:summability}
				\sum_{s \geq 1} s\exp(-c_d R_d(s))<\infty,
			\end{equation}
			then $P(X)$ is almost surely absolutely normal.
		\end{conjecture}
		
		If true, Conjecture~\ref{con:summability} would imply Theorem~\ref{thm:main} with logarithmic exponent $\Gamma_0 = \frac{d-1}{d}$. This would be particularly promising in view of analytic maps since $\Gamma_0 \to 1$ as $d \to \infty$. We are unfortunately not able to prove this. The key issue is that different tuples $ i_1 + \cdots + i_d=s$ might overlap and this causes involved dependencies. In particular, if a sparse collection of $v_n$ take large values and dominates $R_d(s)$ this problem becomes severe. Note that for $d = 2$ this issue is not present as the pairs are always disjoint. Theorem~\ref{thm:main} avoids this problem since the pointwise condition excludes ``holes" in the sequence $v_n$.
		 
		In the following, we describe a temporary way out: a weaker summability condition, which fails to be sharp, but might still be of interest.
		 For \(s\in\N\), let \(\mathfrak F_d(s)\) denote the set of all finite families \(\mathcal I\) of pairwise disjoint \(d\)-element subsets \(I\subset\N\) satisfying $\sum_{i\in I}i=s$.
		We define
		\begin{equation}\label{eq:Rddef}
			R_d'(s)=\sup_{\mathcal I\in\mathfrak F_d(s)}
			\sum_{I\in\mathcal I}\prod_{i\in I}v_i,
		\end{equation}
		with the convention \(R_d'(s)=0\) if \(\mathfrak F_d(s)\) is empty.  We stress the additional disjointness assumption compared to the more natural functional $R_d$. This gives rise to the following.  
		
		\begin{prop}\label{thm:polynomial}
			Let \(P\in\R[x]\) have degree \(d\ge2\).  Then there is a constant \(c_P>0\), depending only on \(P\), with the following property.  If
			\begin{equation}\label{eq:Rdsummability}
				\sum_{s\ge1}s \exp(-c_PR'_d(s))<\infty,
			\end{equation}
			then \(P(X)\) is almost surely absolutely normal.
		\end{prop}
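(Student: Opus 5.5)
We establish absolute normality of $P(X)$ through the standard Fourier route. Fix an integer base $b\ge2$. By Weyl's criterion, $P(X)$ is normal in base $b$ once, for every nonzero integer $h$, the sequence $h b^m P(X)$ is equidistributed mod $1$. By the Davenport--Erd\H{o}s--LeVeque criterion, it suffices to show that
\[
\sum_{M\ge1}\frac1M\,\E\Bigl|\frac1M\sum_{m=1}^M e(hb^mP(X))\Bigr|^2<\infty .
\]
Expanding the square, this reduces to bounding characteristic values $|\E\, e(tP(X))|$ for the differences $t=h(b^m-b^{m'})$. Concretely, the goal is a decay estimate
\[
|\E\, e(tP(X))|\le C\exp\bigl(-c_P R_d'(s(t))\bigr)+(\text{small}),
\]
where $s(t)\approx\log_2|t|+O_P(1)$ is the binary scale matched to the frequency. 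The weight $s$ in the summability condition \eqref{eq:Rdsummability} then accounts for the number of pairs $(m,m')$ whose difference has a given size. Summing over dyadic blocks of $M$ gives the DEL series, exactly as in the proof of Theorem~\ref{thm:quadratic}, which I take as the template. Absolute normality then follows by taking a countable intersection over $b$ and $h$.

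The core step is the Fourier estimate, proved by conditioning. Fix $t$ with $|t|\asymp 2^{s}$, and choose a family $\mathcal I\in\mathfrak F_d(s')$, for $s'$ near $s$, nearly attaining $R_d'(s')$. Condition on all digits $\xi_n$ with $n\notin\bigcup\mathcal I$. Then $X=Y+\sum_{I\in\mathcal I}\sum_{i\in I}\xi_i2^{-i}$, and expanding $P(X)$ multilinearly, the phase $tP(X)$ becomes a polynomial in the free digits $(\xi_i)_{i\in\cup\mathcal I}$ with random (conditioned) coefficients.

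Since the free sets $I$ are pairwise disjoint, I would next condition further on everything except one block $I$ at a time. The aim is to isolate, for each $I$, the top multilinear coefficient of $\prod_{i\in I}\xi_i$ in $tP(X)$, which equals $t\,a_d\,d!\,2^{-s'}$ plus lower-order corrections. For the subcube $\{0,1\}^I$, a standard $d$-th order differencing (Gowers/van der Corput type) bound gives
\[
|\E_I e(\cdot)|\le 1-c\prod_{i\in I}v_i\,\|t a_d d!\,2^{-s'}\|_{\R/\Z}^2 .
\]
The difficulty is that these blocks are not independent after expansion: cross terms couple different blocks. I would handle this by processing the blocks in a fixed order and conditioning on all previously revealed digits, a triangular scheme. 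At each stage only the pure top-degree coefficient on the current block matters for the $d$-fold difference, and this coefficient does not depend on other blocks' digits at the top order in $I$ because $|I|=d$ equals the degree. Multiplying the resulting bounds yields
\[
\prod_{I\in\mathcal I}\bigl(1-c\prod v_i\,\delta\bigr)\le \exp\Bigl(-c\,\delta\sum_{I\in\mathcal I}\prod_{i\in I}v_i\Bigr).
\]

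The main obstacle is the non-resonance factor $\delta=\|t a_d d!\,2^{-s'}\|^2$. For $t=h(b^m-b^{m'})$, one must choose the scale $s'$ within $O_P(1)$ of $\log_2|t|$ so that $\delta$ is bounded below by a constant depending only on $P$, $b$ and $h$. My first attempt would be to note that among $s'\in\{s_0,\dots,s_0+K\}$, the numbers $t a_d d!\,2^{-s'}$ run through successive halvings, and hence one of them lies in $[1/4,1/2]$ mod $1$ once $|t a_d d!|2^{-s_0}\in[1,2]$. Lower-order coefficients of $P$ enter only subleading multilinear terms, so they do not affect the top coefficient. The constant $c_P$ absorbs this choice, and the mismatch between $s$ and $s'$ is handled by monotonic reindexing in \eqref{eq:Rdsummability}.
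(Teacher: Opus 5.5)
The gap is the step where you process the blocks in order and ``multiply the resulting bounds.'' Your single-block estimate is correct. With all digits outside $I$ frozen, the $d$-fold difference in the $d$ variables of $I$ sees only the coefficient $t\,d!\,a_d2^{-s'}$, uniformly in the frozen data.

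The problem is what happens next. A bound on $|\E_I e(tP(X))|$ that is uniform in the other blocks only yields $|\E e(tP(X))|\le\sup|\E_I e(tP(X))|$. Once the modulus is inside, the outer average is of a nonnegative function, so no further cancellation is available. If you instead keep the later blocks random, the function of $\xi_I$ being averaged is $\E_{\mathrm{rest}}\, e(tP(X))$. That is not a pure phase, so differencing in $I$ no longer isolates the top coefficient.

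The cross-block terms genuinely prevent a product bound. Take fair bits, blocks $\{x_k,y_k\}$, and the quadratic phase $\frac12\bigl(\sum_k x_k\bigr)\bigl(\sum_k y_k\bigr)$. Every diagonal coefficient equals $\frac12$, which is maximally non-resonant. Each single-block average has modulus exactly $\frac12$, whatever the other blocks are. Yet $|\E e(\cdot)|=1-2\Prob(\sum_k x_k,\sum_k y_k\text{ both odd})=\frac12$ for every $L$. Your argument uses nothing about the off-diagonal coefficients, so it would prove $2^{-L}$-type decay here; it cannot be right as stated.

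The missing idea is the mechanism the paper uses: fully in the quadratic proof, and as the $d$-linear analogue sketched for this proposition. There are three steps.

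\begin{enumerate}
\item \emph{Separation.} Pass to a separated subfamily $\mathcal I'\subset\mathcal I$ via residue classes modulo a large $K$. This loses only a factor $\kappa_d$ in $\sum_I\prod_{i\in I}v_i$. After ordering, the coefficients coupling block $k$ to later blocks carry factors $2^{-K\ell}$ and total at most $1/64$.
\item \emph{Global differencing.} Apply Jensen/Cauchy--Schwarz to \emph{all} blocks at once, as in Lemma~\ref{lem:GCS}. This turns the phase into a multilinear form in the differences $\Delta$ that is linear in the last group. Conditionally on the other groups it therefore factorizes exactly as $\prod_k|\E e(H_k y_k)|$.
\item \emph{Sequential conditioning.} The coefficients $H_k=R_k+D\Delta_k+T_k$ still depend on the other differences. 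Here $R_k$ is arbitrary (possibly huge) but measurable with respect to earlier $\Delta$'s, and $|T_k|\le 1/64$. Since $\|D\|\ge 1/4$, one of $R_k,R_k\pm D$ is $1/16$-far from $\Z$. Conditioning in order then gives $\E\exp(-c\sum_k v_{j_k}I_k)\le\exp(-c\sum_k v_{i_k}v_{j_k})$.
\end{enumerate}

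In the counterexample above, it is exactly the smallness of the ``future'' terms $T_k$ that fails.

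Two minor points. You need no additive ``small'' error term. Also, $R_d'$ is not monotone, so ``monotonic reindexing'' does not work as such. Instead fix $s=\lfloor\log_2|t\,d!\,a_d|\rfloor+2$ as a function of $|t|$, which your halving argument in fact produces. Then Lemma~\ref{lem:counting} applies directly.
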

		
		Proposition~\ref{thm:polynomial} offers a structurally simple sufficient criterion for non-linear polynomials. By design,  the proof of Theorem~\ref{thm:quadratic} easily extends to the polynomial setting with $R_d'$. To avoid too much repetition, we refrain from giving a full proof and only briefly sketch the needed modifications in Section~\ref{sec:proof1}. 
		The following decay rates for $v_n$  can be obtained from  Proposition~\ref{thm:polynomial}. 
		\begin{cor}\label{cor:polynomial-log}
			Let \(P\in\R[x]\) have degree \(d\ge2\).  There exists \(A_0=A_0(P)>0\) such that, if for all sufficiently large \(n\),
			\begin{equation}\label{eq:polylogcondition}
				v_n\ge A\left(\frac{\log n}{n}\right)^{\frac{1}{d}}
			\end{equation}
			with \(A\ge A_0\), then \(P(X)\) is almost surely absolutely normal.
		\end{cor}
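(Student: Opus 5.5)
We deduce Corollary~\ref{cor:polynomial-log} from Proposition~\ref{thm:polynomial}. It suffices to show that, under \eqref{eq:polylogcondition}, $R'_d(s)\ge \kappa_d A^d \log s$ for all large $s$, where $\kappa_d>0$ depends only on $d$. Granting this,
\[
s\exp(-c_PR'_d(s))\le s^{1-c_P\kappa_dA^d},
\]
which is summable once $c_P\kappa_d A^d>2$. This fixes $A_0(P)=(3/(c_P\kappa_d))^{1/d}$. The finitely many small $s$ are irrelevant for summability. Since $A\ge A_0$ is arbitrary, we may also assume $A^d\cdot\frac{\log n}{n}\le 1$ is not an issue, because $v_n\le 1/4$ holds automatically.

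The main step is to construct, for each large $s$, a family $\mathcal I\in\mathfrak F_d(s)$ of about $s/d^2$ pairwise disjoint $d$-sets, all of whose elements lie in $[s/(2d), s]$. For such indices $v_i\ge A(\log(s/2d)/s)^{1/d}$, so each product satisfies $\prod_{i\in I}v_i\ge A^d\log(s/2d)/s$. Summing over roughly $s/d^2$ sets then gives $R'_d(s)\ge \kappa_dA^d\log s$, as desired.

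\emph{Construction.} Let $m=\lfloor s/d\rfloor$. Choose a base $d$-tuple $a_1<\dots<a_d$ near $m$ with sum exactly $s$, for instance $a_k=m+k-\lceil (d+1)/2\rceil$ plus a correction of size at most $d$ on the last entry. Then take the shifted sets
\[
I_t=\{a_1-\epsilon_1 t,\dots,a_d-\epsilon_d t\}.
\]
The shift signs $\epsilon_k\in\{-1,0,1\}$ are chosen to sum to $0$, which keeps the sum equal to $s$. Disjointness across different $t$ is arranged by spacing the shifts in a residue class mod $d$. A cleaner choice uses only two moving coordinates: for $d=2$ take pairs $\{m-t,\,s-m+t\}$. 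For general $d$, fix $d-2$ entries in a residue pattern and let two entries move oppositely. Fixed entries would be reused across sets, though, so they must move too.

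The most robust option is to assign the coordinates symmetric displacements $\pm t\cdot k$, with the $k$-th coordinate placed in its own window $m+k\,\delta s$ for a small fixed $\delta=\delta(d)$. Different coordinates then occupy disjoint windows, and within one coordinate different $t$ give different values. For $t\le \delta s/(d^2)$ all sets are pairwise disjoint, and their elements remain in $[s/(2d),s]$. Here the windows are balanced in sign so the sums cancel: pair coordinate $k$ with coordinate $d+1-k$, using opposite offsets and opposite displacements. For odd $d$ the middle coordinate is instead shifted by $0$ within its own window, so it must also be varied. Handle this by letting the middle coordinate take steps $2t$ while two others take $-t$ each.

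The main obstacle is purely this combinatorial bookkeeping: obtaining $\gg_d s$ disjoint $d$-sets of distinct integers with exact sum $s$, all of size $\asymp s/d$. After that, everything else is immediate from Proposition~\ref{thm:polynomial} and the monotonicity of $(\log n/n)^{1/d}$ for large $n$.
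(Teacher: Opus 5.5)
Your proposal is correct and follows the paper's route. You lower-bound $R'_d(s)\ge \kappa_d A^d\log s$ by exhibiting $\gg_d s$ pairwise disjoint $d$-sets with sum $s$ and all elements $\asymp_d s$, then invoke Proposition~\ref{thm:polynomial} once $c_P\kappa_dA^d>2$. The only difference is the explicit family: after your exploratory false starts, your separated-window construction with balanced displacements does work. The paper's Lemma~\ref{lem:manytuples} is simpler: it places $d-1$ coordinates in the disjoint windows $[kM+1,(k+1)M]$ and lets the last coordinate absorb the sum, which avoids your sign-balancing and odd-$d$ case analysis.
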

		The proof of Corollary~\ref{cor:polynomial-log} is carried out in Section~\ref{sec:proof1}. Unfortunately,  the criterion becomes even weaker as the degree of the polynomial increases which is in stark contrast to the strengthened carrier mixing for higher degrees.

		\subsection{Overview of related literature}
		
		The study of normal numbers goes back to Borel's  theorem
		\cite{Borel09}. Weyl's classical criterion \cite{Weyl1916} is a valuable tool to prove normality via Fourier methods. The
		Davenport--Erd\H{o}s--LeVeque (DEL) criterion \cite{DELeV63} is a further standard  tool in this context as it turns second-moment estimates into almost sure equidistribution.  We shall use these ideas throughout the paper.  Classical references for uniform
		distribution and normality include \cite{KN74,DT97,Bug12}. Our results give rise to non-normal numbers $x$ for which $P(x)$ is normal. This mirrors to some extent classical results on normality in different integer bases \cite{Cass59} and Schmidt \cite{Schmidt1960,Schmidt1962}. The dimension of numbers with prescribed (non-normal) digit distribution was studied by Eggleston\cite{Eggleston1949}.
		
		A different line of research concerns the explicit construction of normal numbers. This is a surprisingly non-trivial problem. Champernowne's constant \cite{Champernowne1933} $0.1235617910111213\ldots$ consecutively concatenating all positive integers in base-$10$ is probably the most well-known normal number in base $10$. 
		The celebrated results by Copeland, Davenport, Erd\H{o}s \cite{CE46, DE52} extend this type of construction to the concatenation of primes and polynomials in a given base. The construction of absolutely normal numbers is more involved. The first construction goes back  Sierpiński \cite{Sir17} using a clever cover argument. Becher and Figueira \cite{BF02} turned this idea into an algorithm. Later, very efficient algorithms computing absolutely normal numbers have been found \cite{BHS13, LM21}.
		
		 Levin's construction \cite{Lev79} is noteworthy as it constructs numbers with particularly low discrepancy number in a given base. There is also construction which slightly beats Fukuyama's law of the iterated  logarithm \cite{Fu08} jointly in all integer bases \cite{ABSS17}. Bugeaud and Korobov raised the question about the best possible bound on the discrepancy numbers and this remains an interesting challenge. Recent work provide methods to construct numbers which make countably many functions normal \cite{ManaiTranscendence} and non-normal, respectively \cite{ManaiSuperDense}. A natural next step would be an intrinsic algorithm constructing a number $x$ for which $P(x)$ is normal and $Q(x)$ is non-normal - provided that the polynomials $P$ and $Q$ are linearly independent.
		
		A motivating background is Borel's conjecture that every irrational
		algebraic number is normal to every integer base.  At the moment, only very partial results in this direction are known, see for instance the discussion in
		\cite{BBCP04,AB11, BC02}.  Besicovitch's theorem on the decimal digits of
		squares \cite{Besicovitch35} is an early example where polynomial structure and digit distribution interact in a non-trivial way.  The present work may be viewed as a probabilistic model for such digit-mixing phenomena. Conversely, one might also ask which operations preserve normality. Rauzy's result on deterministic numbers \cite{Rauzy76} gives a complete solution for the additive problem $x + a$, but for the multiplicative version $ax$ it remains open whether only rational $a$ preserve normality \cite{BD25}.
		
		\medskip
		Our work has also close ties with several seminal works in fractal and harmonic analysis. We can only mention a selection of these rich fields. The law of $X=\sum_{n\ge1}\xi_n2^{-n}$ can also be understood as infinite convolution of ``coin-tossing'' measures. 
		Such measures are naturally related to the classical theory of infinite product measures, going back to Kakutani \cite{Kakutani1948};
		and were already studied in the context of singular Rajchman measures
		by Hartman--Kershner \cite{HK38}. Natural questions about these types of measures concern the Fourier decay and normality of $X$ \cite{Bis03,Bis04,GMSZ20}. Noteworthy, are also Lyon's result \cite{Lyons1986} showing that there are Rajchman measure $\mu$ for which $X$ is not $\mu$-a.s normal and Host's theorem on $\mu$-a.s. normality for some ergodic measures \cite{Host1995}.
		The Fourier-analytic side of the paper is also reminiscent of the theory of Bernoulli convolutions. Probabilistically speaking,  one studies a random number $X=\sum_{n\ge1}\xi_n \lambda^{-n}$, where the Bernoulli random variables $\xi_n$ are unbiased and the behavior in terms of the parameter $\lambda \in \R$ is of key interest. Classical and modern references include Erd\H{o}s' work \cite{Erdos39}, the theorem of Solomyak \cite{Solomyak1995}, the survey of Peres--Schlag--Solomyak \cite{PSS00}, and later developments
		such as \cite{PS96,Shmerkin2019}.  Kahane's monograph \cite{Kahane1985} provides a broad background on random series.

	  	However, the both mentioned works are primarily concerned with the measure $\mu$ itself. Closest to our setting, are beautiful recent works study nonlinear pushforward \(P_\#\mu\) as we do. Here, the typical setting is that $\mu$ is a self-similar or fractal measure.
		Hochman--Shmerkin \cite{HS15} proved equidistribution results from fractal measures, and
		subsequent works such as \cite{ABS22,AHW11} established pointwise normality
		results under Fourier decay assumptions for self-similar or self-conformal measures.  Baker--Banaji \cite{BB25} studied polynomial Fourier decay for fractal measures and their pushforwards for even analytic maps. Despite being conceptually close, these results have a quite distinct flavor. Indeed, coin-toss measures $\mu$ with constant probability $p_n = p$ are the only case for which the results in \cite{BB25} and in this work apply. Clearly this is in both settings a simple limiting case. The difference is also eminent in the proofs which in our case are more probabilistic compared to rather analytic strategies in the aforementioned works. Perhaps, a combination of our ideas and fractal analytic methods may lead to further progress.

		\section{Proof of the summability criterion}\label{sec:proof1}
		Throughout our proofs, we abbreviate Fourier modes 
		\[ e(u) :=e^{2\pi i u},
		\]
		for $u \in \R$. Moreover, it is convenient to work with the lattice norm 
		$$ \|u\|:=\dist(u,\Z) $$
		measuring the distance to the integers $\Z$. The natural numbers with and without $0$ are denoted by $\N_0$ and $\N$, respectively. For a set $A$, we write $|A|$ for its cardinality.  We follow the convention that universal constants $C, c > 0$ may change from line to line. We also use Landau's $O$-notation, i.e. for two positive sequences $b_n, a_n$, we write $b_n = O(a_n)$ if there exists a constant $C$ such that $b_n \leq C a_n$ for all $n$. If $b_n = o(a_n)$, we further have $\limsup \frac{b_n}{a_n} = 0$.

		\subsection{Preliminaries}
		
		In this section, we collect a few elementary estimates which we use in the sequel. We start with a simple lower bound for the $\sin$-function.
		
		\begin{lemma}\label{lem:sinbound}
			For all \(u\in\R\),
			\[
			\sin^2(\pi u)\ge 4\|u\|^2.
			\]
			Consequently, for \(0\le a\le1\),
			\[
			1-a\sin^2(\pi u)\le \exp(-4a\|u\|^2).
			\]
		\end{lemma}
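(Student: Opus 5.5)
The plan is to reduce to $u\in[0,1/2]$ by symmetry and periodicity, prove a concavity inequality for $\sin$ there, and then derive the second claim from the first using $1-x\le e^{-x}$.

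\emph{Reduction.} Both $\sin^2(\pi u)$ and $\|u\|$ are $1$-periodic and even in $u$. It therefore suffices to treat $u\in[0,1/2]$, where $\|u\|=u$.

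\emph{First inequality.} On $[0,\pi/2]$ the function $\sin$ is concave, so it lies above the chord joining $(0,0)$ and $(\pi/2,1)$. This chord gives Jordan's inequality $\sin x\ge \tfrac{2}{\pi}x$. Putting $x=\pi u$ with $u\in[0,1/2]$ yields $\sin(\pi u)\ge 2u=2\|u\|\ge 0$. Since both sides are nonnegative, we may square to obtain $\sin^2(\pi u)\ge 4\|u\|^2$.

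\emph{Second inequality.} For $0\le a\le 1$, multiplying the first inequality by $a\ge 0$ gives
\[
1-a\sin^2(\pi u)\le 1-4a\|u\|^2.
\]
The elementary bound $1-x\le e^{-x}$ holds for all real $x$, because $x\mapsto e^{-x}$ is convex and lies above its tangent line at $0$. Applying it with $x=4a\|u\|^2$ gives $1-a\sin^2(\pi u)\le \exp(-4a\|u\|^2)$.

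The left-hand side also satisfies $1-a\sin^2(\pi u)\ge 0$ because $a\le 1$. This fact is not needed for the inequality itself, but it is what makes the bound useful later for products of such factors.

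None of these steps is a real obstacle. The only point requiring care is the reduction to $[0,1/2]$, which rests on the periodicity and evenness noted above.
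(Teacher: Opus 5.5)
Your proposal is correct and follows the paper's own argument: reduce to $u\in[0,1/2]$ by periodicity and symmetry, use concavity of $\sin(\pi u)$ there to get the chord bound $\sin(\pi u)\ge 2u=2\|u\|$, square, and then apply $1-x\le e^{-x}$.
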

		
		\begin{proof}
			By periodicity and symmetry it is enough to consider \(0\le u\le1/2\).  On this interval \(\sin(\pi u)\) is concave, equals \(0\) at \(0\), and equals \(1\) at \(1/2\).  Hence it lies above the chord joining these points, so \(\sin(\pi u)\ge2u=2\|u\|\).  Squaring gives the first claim.  The second follows from \(1-x\le e^{-x}\).
		\end{proof}
		
		Our proof frequently compares different dyadic scales. The following trivial bound turns out to be useful.
		
		\begin{lemma}\label{lem:scale}
			Let \(\lambda\ne0\) and assume \(|\lambda|\ge1\).  Put $
			s(\lambda)=\lfloor\log_2|\lambda|\rfloor+2$. 
			Then $ \frac14\le |\lambda|2^{-s(\lambda)}<\frac12$,
			and therefore
			$$ \|\lambda2^{-s(\lambda)}\|\ge\frac14 .$$
		\end{lemma}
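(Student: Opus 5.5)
The plan is a direct computation from the definition of $s(\lambda)$ via the floor function. Write $t=\log_2|\lambda|$. Since $|\lambda|\ge1$, we have $t\ge0$, so $s(\lambda)=\lfloor t\rfloor+2\ge2$ is a well-defined natural number. The floor satisfies $\lfloor t\rfloor\le t<\lfloor t\rfloor+1$, which translates to
\[
2^{s(\lambda)-2}\le|\lambda|<2^{s(\lambda)-1}.
\]
Multiplying through by $2^{-s(\lambda)}$ gives $\frac14\le|\lambda|2^{-s(\lambda)}<\frac12$, which is the first assertion.

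For the second assertion, set $u=\lambda2^{-s(\lambda)}$, so that $\frac14\le|u|<\frac12$. The lattice norm is even, $\|u\|=\|-u\|$, so we may assume $u\in[\frac14,\frac12)$. On this interval the nearest integers are $0$ and $1$. The distance to $0$ is $u\ge\frac14$, and the distance to $1$ is $1-u>\frac12$. Every other integer is farther away. Hence $\|u\|=\min(u,1-u)=u\ge\frac14$.

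There is no real obstacle here. The only points needing care are that $|\lambda|\ge1$ guarantees $s(\lambda)\ge 2$, and that we use the strict inequality $|u|<\frac12$ only for the upper bound; the lower bound $\frac14$ alone suffices to bound the distance to $\Z$.
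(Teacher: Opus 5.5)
Your proof is correct and is exactly the routine floor-function computation the paper intends; the paper states this lemma as a trivial bound and writes out no proof. One quibble concerns your closing remark: the lower bound $|u|\ge\frac14$ alone does not control $\|u\|$. You also need $|u|\le\frac34$ (supplied here by $|u|<\frac12$) to keep $u$ away from $\pm1$, and your step $1-u>\frac12$ in fact uses this upper bound.
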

		
		A slightly more sophisticated scale estimate will be content of the next result. For \(\alpha>0\), define the scale map
		\begin{equation}\label{eq:Salpha}
			S_\alpha(u)=
			\begin{cases}
				0, & 0\le \alpha u<1,\\
				\lfloor\log_2(\alpha u)\rfloor+2, & \alpha u\ge1.
			\end{cases}
		\end{equation}
		The value \(0\) for small frequencies is motivated by the later employed second-moment argument.
		
		\begin{lemma}\label{lem:counting}
			Fix \(\alpha>0\), an integer \(b\ge2\), and \(h\in\Z\setminus\{0\}\).  There is a constant \(C=C(\alpha,b,h)\) such that for every \(s\ge1\),
			\[
			|\{(m,r)\in\N_0\times\N:S_\alpha(|h|b^m(b^r-1))=s\}|\le C(s+1).
			\]
			Moreover the set of pairs \((m,r)\) for which \(S_\alpha(|h|b^m(b^r-1))=0\) is finite.
		\end{lemma}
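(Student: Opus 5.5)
The plan is to reduce the counting to the observation that $S_\alpha(u)$ is essentially $\log_2(\alpha u)$, while $\log_2\bigl(|h|b^m(b^r-1)\bigr)$ is essentially $m\log_2 b + r\log_2 b$. A fixed value $s$ therefore pins down $m+r$ up to a bounded window, and there are only $O(s)$ pairs with $m+r$ in such a window.

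Write $u_{m,r}=|h|b^m(b^r-1)$. For $r\ge1$ we have $b^r/2\le b^r-1<b^r$, since $b\ge2$. Hence
\[
(m+r)\log_2 b+\log_2(\alpha|h|)-1\le \log_2(\alpha u_{m,r})<(m+r)\log_2 b+\log_2(\alpha|h|).
\]
Set $K:=m+r\ge1$ and $\beta:=\log_2(\alpha|h|)$.

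First, suppose $S_\alpha(u_{m,r})=s\ge1$. Then $\alpha u_{m,r}\ge1$ and $\lfloor\log_2(\alpha u_{m,r})\rfloor=s-2$, so $s-2\le\log_2(\alpha u_{m,r})<s-1$. Combining this with the display gives
\[
K\log_2 b+\beta-1<s-1 \quad\text{and}\quad K\log_2 b+\beta\ge s-2 .
\]
This confines $K$ to the interval $\bigl[(s-2-\beta)/\log_2 b,\ (s-\beta)/\log_2 b\bigr)$. Its length is $2/\log_2 b\le2$, so it contains at most $3$ integers. Every integer $K$ in it satisfies $K\le (s+|\beta|)/\log_2 b\le s+|\beta|$. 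For a fixed $K$, the pairs $(m,r)\in\N_0\times\N$ with $m+r=K$ number exactly $K$, since $r$ ranges over $1,\dots,K$. The total count is therefore at most $3(s+|\beta|)\le C(s+1)$ with $C=3(1+|\beta|)$, which depends only on $\alpha,b,h$.

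Second, suppose $S_\alpha(u_{m,r})=0$, so $\alpha u_{m,r}<1$. The lower bound in the display then gives $K\log_2 b+\beta-1<0$, that is, $K<(1-\beta)/\log_2 b$. Only finitely many $K\ge1$ satisfy this, and each $K$ corresponds to finitely many pairs, so the exceptional set is finite.

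There is no real obstacle in this argument. The only point needing care is the uniform comparison $b^r-1\ge b^r/2$, which holds because $r\ge1$ and $b\ge2$. Since $r=0$ is excluded, $u_{m,r}$ never vanishes and the logarithms are always defined.
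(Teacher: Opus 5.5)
Your proof is correct and uses essentially the same counting as the paper: the condition $S_\alpha(|h|b^m(b^r-1))=s$ forces $b^m(b^r-1)\asymp b^{m+r}$ into a fixed dyadic window, which determines $m+r$ up to $O(1)$. The only difference is the order of counting. The paper fixes $m$ first ($O(s)$ choices) and then finds $O(1)$ admissible $r$, whereas you use $b^r/2\le b^r-1<b^r$ to pin $K=m+r$ to at most three values and then count the $K\le s+|\beta|$ pairs on each diagonal, which also yields the explicit constant $C=3(1+|\beta|)$.
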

		
		\begin{proof}
			First suppose \(S_\alpha(|h|b^m(b^r-1))=s\ge1\).  Then
			\[
			2^{s-2}\le \alpha |h|b^m(b^r-1)<2^{s-1}.
			\]
			In particular \(b^m\le C_ 2^s\), so there are at most \(C(s+1)\) possible values of \(m\).  For each fixed \(m\), the same inequality places \(b^r-1\) in an interval of the form
			\[
			c_1 2^s b^{-m}\le b^r-1< c_2 2^s b^{-m},
			\]
			where \(c_1,c_2>0\) depend only on \(\alpha,b,h\).  This immediately implies that there are at most $C>0$ different values of $r$ satisfying this bound.  In total, this yields the claimed \(O(s)\) bound.
			
			If \(S_\alpha(|h|b^m(b^r-1))=0\), then \(\alpha |h|b^m(b^r-1)<1\).  Since \(b^m(b^r-1)\to\infty\) whenever either \(m\to\infty\) or \(r\to\infty\), only finitely many pairs satisfy this inequality.
		\end{proof}
		
		\subsection{A Fourier criterion for absolute normality}
		
		It is well known that a fast enough Fourier decay of a random number implies its almost sure absolute normality. Most results in the literature start from polynomial decay and the key point of the following proposition is that a much slower decay is enough.
		
		\begin{prop}\label{prop:normalitycriterion}
			Let \(Y\) be a real-valued random variable.  Let \(\alpha>0\), let \((B_s)_{s\ge1}\) be a nonnegative sequence, and let \(c>0\) be some constant.  Assume that for every real \(u\) with \(S_\alpha(|u|)\ge1\),
			\begin{equation}\label{eq:FourierAssumption}
				|\E e(uY)|\le \exp(-cB_{S_\alpha(|u|)}).
			\end{equation}
			Assume also that
			\begin{equation}\label{eq:Bsum}
				\sum_{s\ge1}s\exp(-cB_s)<\infty.
			\end{equation}
			Then \(Y\) is almost surely absolutely normal.
		\end{prop}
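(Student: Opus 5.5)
We prove Proposition~\ref{prop:normalitycriterion} by combining Weyl's criterion with the Davenport--Erd\H{o}s--LeVeque (DEL) criterion. Fix an integer base $b\ge2$. Normality of $Y$ in base $b$ is equivalent to equidistribution of $(b^nY)_{n\ge0}$ modulo one. By Weyl's criterion, this amounts to showing that for every $h\in\Z\setminus\{0\}$,
\[
\frac1N\sum_{n=0}^{N-1}e(hb^nY)\to0 \quad\text{almost surely.}
\]
Since there are countably many bases $b$ and frequencies $h$, it suffices to treat one pair $(b,h)$ at a time and then take a countable intersection of full-measure events.

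For fixed $(b,h)$ we apply the DEL criterion. Put $S_N=\sum_{n<N}e(hb^nY)$. DEL states that $\sum_{N\ge1}N^{-1}\E|S_N/N|^2<\infty$ implies $S_N/N\to0$ almost surely. We expand the second moment as
\[
\E|S_N|^2=N+\sum_{n\ne n'}\E e\bigl(h(b^n-b^{n'})Y\bigr).
\]
Writing $n'=m$ and $n=m+r$ with $r\ge1$, and using the conjugate symmetry of the terms, the off-diagonal part is bounded by
\[
2\sum_{0\le m<N}\;\sum_{1\le r<N-m}\bigl|\E e\bigl(hb^m(b^r-1)Y\bigr)\bigr|.
\]
We now sort the pairs $(m,r)$ by the scale $s=S_\alpha(|h|b^m(b^r-1))$.

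By Lemma~\ref{lem:counting}, only finitely many pairs have scale $s=0$, so they contribute $O(1)$. For scale $s\ge1$, the hypothesis \eqref{eq:FourierAssumption} bounds each term by $\exp(-cB_s)$. Lemma~\ref{lem:counting} also shows that at most $C(s+1)$ pairs have scale $s$. Hence the sum over all pairs $(m,r)$ with no truncation at $N$ is at most
\[
C+C\sum_{s\ge1}(s+1)\exp(-cB_s)<\infty
\]
by \eqref{eq:Bsum}. This gives $\E|S_N|^2\le N+K$ with $K$ independent of $N$. Then $\E|S_N/N|^2\le 1/N+K/N^2$, so $\sum_N N^{-1}\E|S_N/N|^2<\infty$. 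DEL now yields almost sure convergence to $0$.

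The main point to check is the counting bound. The bound $O(s)$ on the number of pairs per scale is exactly what is needed to make the weighted sum $\sum s\,e^{-cB_s}$ appear, and Lemma~\ref{lem:counting} provides it. A second point is that we need a uniform bound on the total off-diagonal sum, not merely $o(N^2)$ with a rate. We obtain this because the double sum over pairs $(m,r)$ converges absolutely. Using this absolute bound also makes the order of summation irrelevant, since we are simply dropping the restriction to indices below $N$.

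If one prefers to avoid quoting DEL, the same bound $\E|S_N|^2\le N+K$ gives a direct argument. Along $N_k=k^2$, Chebyshev's inequality and Borel--Cantelli show that $S_{N_k}/N_k\to0$ almost surely. The gaps are handled by $|S_N-S_{N_k}|\le N_{k+1}-N_k=O(k)=o(N_k)$.
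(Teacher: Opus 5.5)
Your proof is correct and follows the paper's argument. You expand the second moment, sort the off-diagonal pairs $(m,r)$ by scale via Lemma~\ref{lem:counting} to get the uniform bound $\E|S_N|^2\le N+K$, then obtain almost sure convergence and apply Weyl's criterion over the countably many pairs $(b,h)$. The only difference is cosmetic: the paper finishes with the thinning $N_k=k^2$ and Borel--Cantelli (your fallback argument), whereas your main route quotes DEL, which the paper states only later as Lemma~\ref{lem:DEL}.
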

		
		The proof is more or less a routine application of Weyl's criterion combined with a second moment analysis. To be self-contained, we still choose to present the straightforward proof.
		\begin{proof} 
			Fix an integer base \(b\ge2\) and an integer \(h\ne0\).  Define
			\[A_N(h,b) :=\frac1N\sum_{n=0}^{N-1}e(hb^nY).\]
			We want to prove that \(A_N(h,b)\to0\) almost surely.  Expanding the second moment gives
			\begin{align*}
				\E |A_N(h,b)|^2
				&=\frac1{N^2}\sum_{m,n=0}^{N-1}
				\E e(h(b^n-b^m)Y).
			\end{align*}
			The diagonal terms contribute \(1/N\).  For \(n>m\), put $r:=n-m\ge1$.  Then $b^n-b^m=b^m(b^r-1)$ and let us further introduce the abbreviation
			$u_{m,r}=|h|b^m(b^r-1)$.
			The pairs with \(S_\alpha(u_{m,r})=0\) are finite by Lemma \ref{lem:counting}; their total contribution to the infinite off-diagonal sum is finite because each term has modulus at most \(1\).  For the remaining pairs, \eqref{eq:FourierAssumption} gives
			\[
			\left|\E e(hb^m(b^r-1)Y)\right|
			\le \exp(-cB_{S_\alpha(u_{m,r})}).
			\]
			Using Lemma \ref{lem:counting},
			\begin{align*}
				\sum_{m\ge0}\sum_{r\ge1}
				\left|\E e(hb^m(b^r-1)Y)\right|
				&\le C_{\alpha,b,h}+\sum_{s\ge1}C_{\alpha,b,h}(s+1)e^{-cB_s}<\infty.
			\end{align*}
			It follows that there is a constant \(C_{b,h}<\infty\) such that, for every \(N\),
			\begin{equation}\label{eq:ANsecond}
				\E |A_N(h,b)|^2\le \frac1N+\frac{C_{b,h}}{N^2}\le \frac{C_{b,h} +1}N.
			\end{equation}
			We argue via a well-known thinning argument. Choose \(N_k=k^2\).  Then
			\[
			\sum_{k=1}^\infty \E |A_{N_k}(h,b)|^2<\infty.
			\]
			Hence, by Markov's inequality and Borel--Cantelli,
			$A_{N_k}(h,b)\to0$ almost surely.
			If \(N_k\le N<N_{k+1}\), then
			\begin{align*}
				|A_N(h,b)|
				&\le |A_{N_k}(h,b)|+\frac{N-N_k}{N}+\frac{|N-N_k|}{N}|A_{N_k}(h,b)|
				\le |A_{N_k}(h,b)|+2\frac{N_{k+1}-N_k}{N_k}.
			\end{align*}
			Since \((N_{k+1}-N_k)/N_k=(2k+1)/k^2\to0\), we get \(A_N(h,b)\to0\) almost surely.
			
			Taking the countable intersection over all \(b\ge2\) and all \(h\in\Z\setminus\{0\}\), we obtain, almost surely,
			\[
			\frac1N\sum_{n=0}^{N-1}e(hb^nY)\to0
			\]
			for every such pair \((b,h)\).  Weyl's criterion then implies that \(\{b^nY\}_{n\ge0}\) is uniformly distributed modulo \(1\) for every \(b\ge2\).  This is equivalent to normality of \(Y\) in base \(b\), and hence to absolute normality.
		\end{proof}
		
		In the next sections, we will show that $X^2$ satisfies the assumptions from Proposition~\ref{prop:normalitycriterion} and is thus almost surely normal.

		\subsection{The quadratic Fourier estimate and proof of Theorem~\ref{thm:quadratic}}
		
		The key technical contribution to the proof of Theorem~\ref{thm:quadratic} is the following Fourier estimate.
		
		\begin{prop}\label{prop:quadraticFourier}
			Let \(R_2(s)\) be defined by \eqref{eq:R2def}.  There is an absolute constant \(c_2>0\) such that, for every real \(t\) satisfying \(|2t|\ge1\),
			\begin{equation}\label{eq:quadraticFourier}
				|\E e(tX^2)|\le \exp\bigl(-c_2R_2(S_2(|t|))\bigr),
			\end{equation}
			where \(S_2(|t|)=S_2^{\ast}(|t|):=\lfloor\log_2(2|t|)\rfloor+2\).  For \(|2t|<1\) the trivial estimate \(|\E e(tX^2)|\le1\) holds.
		\end{prop}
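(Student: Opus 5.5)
The plan is to decouple the quadratic form by Cauchy--Schwarz, which turns the problem into a product of one-dimensional Bernoulli Fourier factors controlled by Lemma~\ref{lem:sinbound}. Only the disjoint pairs \((i,j)\) with \(i<j\), \(i+j=s\), \(s:=S_2(|t|)\) should matter. By Lemma~\ref{lem:scale} applied to \(\lambda=2t\), these are exactly the pairs for which the cross term \(2t\,2^{-i-j}\xi_i\xi_j\) has a phase of order one modulo \(1\).

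Let \(A=\{i<s/2\}\) and \(B=\{j>s/2\}\), and split \(X=x_A+x_B+x_{\mathrm{rest}}\). The rest consists of \(s/2\) itself if \(s\) is even, together with indices \(\ge s\), which never occur in a relevant pair. Condition on \(x_{\mathrm{rest}}\) and write
\[
tX^2 = F(x_A)+G(x_B)+2t\,x_Ax_B,
\]
where \(F,G\) absorb the squares and the interactions with the rest. Apply Cauchy--Schwarz in the \(A\)-variables after exchanging the order of expectation:
\[
|\E e(tX^2)|^2\le \E_{x_B,x_B'}\Bigl|\E_{x_A} e\bigl(2t\,x_A\Delta\bigr)\Bigr|,
\qquad \Delta:=x_B-x_B'=\sum_{j\in B}(\xi_j-\xi_j')2^{-j},
\]
with \(x_B'\) an independent copy. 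The term \(F(x_A)\) cancels in the modulus, so only the bilinear part survives. By independence of the digits in \(A\) and \(|1-p+pe(\theta)|^2=1-4v\sin^2(\pi\theta)\), Lemma~\ref{lem:sinbound} gives
\[
\Bigl|\E_{x_A}e(2tx_A\Delta)\Bigr|\le \exp\Bigl(-2\sum_{i\in A} v_i\,\|2t\,2^{-i}\Delta\|^2\Bigr).
\]

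It then remains to show that \(\E\exp\bigl(-2\sum_{i} v_i\|2t2^{-i}\Delta\|^2\bigr)\le \exp(-cR_2(s))\). The mechanism is the following. For \(i\in A\) with partner \(j=s-i\in B\), changing \(\xi_j-\xi_j'\) from \(0\) to \(\pm1\) shifts \(2t2^{-i}\Delta\) by \(\pm 2t2^{-s}\). By Lemma~\ref{lem:scale} this shift has lattice norm in \([1/4,1/2)\). So, whatever the other digits are, for at least one of the values \(0\) or \(\pm1\) of \(\xi_j-\xi_j'\) we have \(\|2t2^{-i}\Delta\|\ge1/8\). Each nonzero value of \(\xi_j-\xi_j'\), and the value \(0\), occurs with probability \(\ge v_j\). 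Hence, conditionally on everything but \((\xi_j,\xi_j')\), index \(i\) is ``good'' with probability at least \(v_j\). A good index costs a factor \(e^{-v_i/32}\), so heuristically
\[
\E\prod_i(\cdots)\le\prod_{i}\bigl(1-v_j(1-e^{-v_i/32})\bigr)\le\exp\Bigl(-c\sum_{i<j,\,i+j=s}v_iv_j\Bigr).
\]
Taking square roots gives \eqref{eq:quadraticFourier} with some absolute \(c_2\).

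The main obstacle is making that product bound rigorous. Goodness of \(i\) depends on the whole of \(\Delta\), not only on the partner digit \(j=s-i\). Consequently, the events for different \(i\) are not independent, and the lower-bound argument above conditions on the other \(B\)-digits that drive other indices. I would first try a sequential (martingale) revelation. Reveal the pairs \((\xi_j,\xi_j')\) one at a time and bound the conditional expectation of the \(i\)-factor using only the freshly revealed partner, via \(\E[e^{-a Z}\mid\mathcal F]\le 1-v_j(1-e^{-a})\) for a good-indicator \(Z\) with conditional probability \(\ge v_j\). To avoid the circularity, I would replace each factor by a lower bound on \(\|\cdot\|^2\) that is monotone in the revealed information. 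Concretely, order the pairs so that \(j\) increases, i.e.\ \(i\) decreases. Then at the step where \((\xi_j,\xi_j')\) is revealed, the phase \(2t2^{-i}\Delta\) is determined modulo the unrevealed coarse digits \(j'<j\). Those contribute multiples of \(2t2^{-i-j'}\), whose size I expect is the real issue and may force a further dyadic triangularization. If that fails, a fallback is to keep only a sub-collection of pairs, e.g.\ pairs with \(i\) in a sparse residue class. Along such a sub-collection the conditional shifts are well separated, and one loses only a constant factor in \(R_2(s)\).
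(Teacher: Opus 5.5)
\paragraph{Verdict.} Your architecture matches the paper's: Cauchy--Schwarz decoupling, one-dimensional Bernoulli factors bounded via Lemma~\ref{lem:sinbound}, a three-point argument based on $\|2t2^{-s}\|\ge 1/4$ from Lemma~\ref{lem:scale}, and iterated conditioning. You difference the large-index digits and take the Bernoulli factors from the small-index ones. The paper does the reverse: Jensen in $y=\xi_{j}$, a copy of $x=\xi_{i}$. By the symmetry of $v_iv_j$ this is immaterial. However, the step that actually carries the proof, the rigorous product bound, is left open. Your diagnosis of the obstruction is also backwards, so the main line as written does not go through.

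\paragraph{Where the obstruction really is.} Put $D=2t2^{-s}$ and $\delta_{j}=\xi_{j}-\xi_{j}'$, and fix a pair $(i,j)$ with $j=s-i$. The phase of $\xi_i$ is then
\[
2t2^{-i}\Delta=\sum_{j'}D2^{j-j'}\delta_{j'}.
\]
If you reveal the $\delta_{j'}$ in order of increasing $j'$, as you propose, then the coarse digits $j'<j$ are already revealed, not unrevealed. Their contribution, however large, is a known shift $R$, and the three-point argument handles an arbitrary $R$. The uncontrolled part is the unrevealed fine tail $\sum_{j'>j}D2^{j-j'}\delta_{j'}$. Its size can approach $|D|\sum_{\ell\ge1}2^{-\ell}=|D|$, the same order as the nonresonance margin $\|D\|\ge1/4$. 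So goodness of $i$ cannot be certified from the past plus the current pair, and keeping all pairs fails. Revealing fine-to-coarse would be worse, since then the unrevealed shifts are huge multiples of $D$.

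\paragraph{The missing step.} The sparse residue class is therefore not a fallback but the necessary ingredient, and it is exactly what the paper does.
\begin{enumerate}
\item Fix $K$ with $\sum_{\ell\ge1}2^{-K\ell}\le 1/64$.
\item By pigeonhole, pick a residue class mod $K$ of the smaller index with $\sum v_{i}v_{j}\ge R_2(s)/K$.
\item Condition on \emph{every} digit outside the selected pairs. The discarded digits of $A$ and $B$ must go into $x_{\mathrm{rest}}$; otherwise they still feed into $\Delta$ with coefficients of size about $|D|$.
\end{enumerate}
After this thinning, the fine tail is at most $|D|/64<1/128$. Define
\[
I_k=\mathbf{1}\{\|R_k+D\delta_{j_k}\|\ge 1/16\}.
\]
Then $I_k$ is measurable with respect to the pairs revealed so far, has conditional probability at least $v_{j_k}$, and forces $\|2t2^{-i_k}\Delta\|\ge 1/32$. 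The tower property then gives
\[
\E\exp\Bigl(-c\sum_k v_{i_k}I_k\Bigr)\le\prod_k\bigl(1-v_{j_k}(1-e^{-cv_{i_k}})\bigr)\le\exp\Bigl(-c\sum_k v_{i_k}v_{j_k}\Bigr),
\]
which completes the proof.
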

		
		Let us first record that this readily finishes the proof of Theorem~\ref{thm:quadratic}. \begin{proof}[Proof of Theorem \ref{thm:quadratic}]
			Apply Proposition \ref{prop:quadraticFourier} with $Y=X^2$, \(\alpha=2\), and \(B_s=R_2(s)\).  For frequencies with \(S_2(|u|)=0\) the Fourier transform is bounded trivially by \(1\), and these frequencies contribute only finitely many off-diagonal terms in Proposition \ref{prop:normalitycriterion}.  The summability hypothesis \eqref{eq:R2summability} is exactly \eqref{eq:Bsum}.  Proposition~\ref{prop:normalitycriterion} therefore gives almost sure absolute normality of $X^2$.
		\end{proof}
		
		The main idea behind the proof of Proposition~\ref{prop:quadraticFourier} is to identify enough sources of randomness near the digit position $s=\lfloor\log_2(2|t|)\rfloor+2$ to cause the needed cancellations.  To do so, we consider a thinned set of pairs $i + j = s$ which yields a nested structure upon we can iteratively condition and then find in each step a further linear source of randomness.
		
		\begin{proof}[Proof of Proposition~\ref{prop:quadraticFourier}]
			Assume \(|2t|\ge1\), and put as before $s=\lfloor\log_2(2|t|)\rfloor+2$. By Lemma \ref{lem:scale},
			\begin{equation}\label{eq:quadnonres}
				\|2t2^{-s}\|\ge\frac14.
			\end{equation}
			We define the set of pairs with sum $s$,
			\[ \mathcal P_s :=\{\{i,j\}:1\le i<j,\, i+j=s\}.\]
			The pairs in \(\mathcal P_s\) are pairwise disjoint, since an index $i$ can only be paired with $s-i$.
			To decouple the tail digits from those around position $s$, we fix an integer $K$ so large that
			\begin{equation}\label{eq:Kdef}
				\sum_{\ell\ge1}2^{-K\ell}\leq \frac{1}{64}.
			\end{equation}
			We partition the set of pairs $P_s$ according to the residue class of the smaller index modulo $K$. By the pigeon hole principle, we find a residue class $\mathcal P=\{(i_r,j_r)\}_{r=1}^L$ with
			\begin{equation}\label{eq:Residuedef}
			\sum_{r=1}^L v_{i_r}v_{j_r}\geq \frac{R_2(s)}{K}.
			\end{equation}
			We condition on all binary digits outside the pairs from $\mathcal P$ and treat those as constant in the following. Let us write $x_r=\xi_{i_r}$, $y_r=\xi_{j_r}$. Then, by the binomial identity
			\begin{equation}\label{eq:binom}
			tX^2=\phi(x)+\psi(y)+\sum_{r,k=1}^L D2^{i_k-i_r}x_ry_k,
			\end{equation}
			with some functions $\phi, \psi$ and the abbreviation $D :=2t2^{-s}.$ By Jensen's inequality in the $y$-variables,  we obtain 
			$$ |\mathbb E_{x,y}e(tX^2)|^2 \leq \mathbb E_{y} \left| \mathbb E_{x} e\left(\phi(x) + \sum_{r,k=1}^L D2^{i_k-i_r}x_ry_k \right) \right|^2. $$
			In a next step, we expand the square by introducing an independent copy $x'$ of $x$
			$$ \left|\mathbb E_{x} e\left(\phi(x) + \sum_{r,k=1}^L D2^{i_k-i_r}x_ry_k \right)\right|^2 = \mathbb E_{x,x'}  e\left(\phi(x) - \phi(x') + \sum_{r,k=1}^L D2^{i_k-i_r}(x_r - x'_r) y_k \right).$$
			Taking the absolute value, we arrive at the bound 
			\begin{equation}\label{eq:factor}
			|\mathbb E_{x,y}e(tX^2)|^2
			\leq \mathbb E_{\Delta} \prod_{k=1}^L
			\left|\mathbb E_{y_k}e(H_k(\Delta)y_k)\right|,
			\end{equation}
			where we introduced \(\Delta_r :=x_r-x'_r\) and
			$H_k(\Delta) :=\sum_{r=1}^L D2^{i_k-i_r}\Delta_r.$ Note that the upper bound in \eqref{eq:factor} factorizes and it remains to exclude possible resonances.
			Lemma~\ref{lem:sinbound} implies the  following elementary  Bernoulli bound,
			$$ |\mathbb E_{y_k}e(uy_k)|\leq \exp(-c v_{j_k}\|u\|^2),$$
			which in turn yields
			\begin{equation}\label{eq:quadbound}
			|\mathbb E_{x,y}e(tX^2)|^2
			\leq \mathbb E_{\Delta} \exp\left(-c\sum_{k=1}^L v_{j_k}\|H_k(\Delta)\|^2\right).
			\end{equation}
			Let us decompose $H_k$ as follows
			\[ H_k :=R_k+D\Delta_k+T_k,\]
			where \(R_k\) is measurable with respect to
			\(\Delta_1,\ldots,\Delta_{k-1}\), and
			$|T_k| \leq |D|\sum_{\ell\ge1}2^{-K\ell} \leq \frac{1}{64}$ by \eqref{eq:Kdef} and Lemma~\ref{lem:scale}.
			Recall further that by \eqref{eq:quadnonres} \(\|D\|\ge1/4\). Therefore not all three points $R_k, R_k + D, R_k - D$ can be within distance $< \frac{1}{16}$ from $\Z$. 
			The law of $\Delta_k$ the implies the conditional bound
			\[ \mathbb P\left(\|R_k+D\Delta_k\|\ge1/16
			\mid \Delta_1,\ldots,\Delta_{k-1}\right)
			\ge v_{i_k}.
			\]
			Let $I_k := \mathbbm{1}_{\|R_k+D\Delta_k\|\ge1/16}$ and note that $I_k=1$ implies $\|H_k\|\ge1/32$. Hence by \eqref{eq:quadbound}
			\begin{equation}
			|\mathbb E_{x,y}e(tX^2)|^2
			\leq \mathbb E\exp\left(-c\sum_{k=1}^L v_{j_k}I_k\right).
			\end{equation}
			Now we iterate the conditional expectations such that we obtain
			\[ \mathbb E\exp\left(-c\sum_{k=1}^L v_{j_k}I_k\right)
			\leq \prod_{k=1}^L \left(1-v_{i_k}(1-e^{-cv_{j_k}})\right)
			\leq \exp\left(-c\sum_{k=1}^L v_{i_k}v_{j_k}\right),
			\]
			and recall that the we follow the convention that $c$ may change from bound to bound. In total, we have
			\begin{equation}
			|\mathbb E_{x,y}e(tX^2)|
			\leq \exp\left(-c\sum_{k=1}^L v_{i_k}v_{j_k}\right)
			\leq \exp(-c_2R_2(s))
			\end{equation}
			for some absolute constant $c_2 > 0$. The last bound follows from the choice $\mathcal P$, that is \eqref{eq:Residuedef}.
		\end{proof}
		
		\subsection{The weak criterion for general polynomial maps}
		
		We briefly indicate how the preceding quadratic argument extends to a
		polynomial map
		\[ P(x)=a_dx^d+a_{d-1}x^{d-1}+\cdots+a_0,\qquad a_d\neq0,
		\qquad d\ge2.
		\]
		This gives the weak summability criterion stated in Proposition~\ref{thm:polynomial}.

		Put $\Lambda_P:=d!a_d$ and recall the modified $d$-fold interaction
		$$ R_d'(s)=\sup_{\mathcal I\in\mathcal F_d(s)}
		\sum_{I\in\mathcal I}\prod_{i\in I}v_i, $$
		with the set \(\mathcal F_d(s)\)  of all
		finite families \(\mathcal I\) of pairwise disjoint \(d\)-element subsets
		\(I\subset \mathbb N\) such that $\sum_{i\in I} i=s.$

		We start again by fixing the dyadic scale
		$s=\bigl\lfloor \log_2(|t\Lambda_P|)\bigr\rfloor+2$
		for $ |t\Lambda_P|\ge1$.
		Then the distance to integers is controlled by 
		\[\|t\Lambda_P2^{-s}\|\ge \frac14.\]
		In a next step,  one uses the separated-block device as in the quadratic case.
		After partitioning any family
		\(\mathcal I\in\mathcal F_d(s)\) into finitely many residue classes, depending only on $d$, one obtains a separated subfamily
		\(\mathcal I'\subset\mathcal I\) such that
		\[ \sum_{I\in\mathcal I'}\prod_{i\in I}v_i
		\geq \kappa_d \sum_{I\in\mathcal I}\prod_{i\in I}v_i
		\]
		for some $\kappa_d>0$.  Now one derives via iterated Jensen the analogue of \eqref{eq:factor} factoring the exponent into linear terms. Then one argues conditionally to arrive at a bound of the form 
		\[\left|\mathbb E e\!\left(tP(X)\right)\right|
		\leq\exp\left(-c_{P,d} \sum_{I\in\mathcal I'}\prod_{i\in I}v_i
		\right).
		\]
		The main ingredients are the the disjointness of $d$-elements in $\mathcal I$ and the carefully chosen separation which guarantees that
		\(t\Lambda_P2^{-s}\) remains uniformly away from the integers, whereas all ``future'' coefficients produced by off-diagonal interactions are small enough to be absorbed 
		 into the constants.  This is the
		$d$-linear analogue of the separated-pair decoupling used for \(X^2\).
		
		In total, one obtains with some absolute constant $c_P$
		\[\left|\mathbb E e\!\left(tP(X)\right)\right|
		\leq\exp\left(-c_P \sum_{I\in\mathcal I}\prod_{i\in I}v_i\right)
		\]
		for every \(\mathcal I\in\mathcal F_d(s)\).  Taking the supremum over
		\(\mathcal I\) yields the Fourier estimate
		\[\left| \mathbb E e\!\left(tP(X)\right) \right|
		\leq \exp\left(-c_P R_d'(s)\right),
		\qquad s=\bigl\lfloor \log_2(|t\Lambda_P|)\bigr\rfloor+2.
		\]
		 Hence, Proposition~\ref{prop:normalitycriterion} and the summability condition
		\[\sum_{s\ge1} s\exp(-c_PR_d'(s))<\infty\]
		imply almost sure absolute normality of $P(X)$.
		
		\subsection{Proof of Corollary~\ref{cor:polynomial-log}}
		
		It remains to prove Corollaries \ref{cor:quadratic-log} and \ref{cor:polynomial-log}. In fact, the quadratic case is a special case of Corollary~ \ref{cor:polynomial-log}, so we will only demonstrate this more general result. The only combinatorial ingredient is the existence of many disjoint diagonal \(d\)-tuples with all indices comparable to the diagonal parameter.
		
		\begin{lemma}\label{lem:manytuples}
			Fix \(d\ge2\).  There is a constant \(c_d >0\) such that, for every sufficiently large \(s\), there exist at least \(c_ds\) pairwise disjoint \(d\)-element sets \(I\subset\N\) satisfying
			\[ \sum_{i\in I}i=s \qquad \text{and} \qquad c_ds\leq i\leq s
			\qquad\text{for every }i\in I. \]
		\end{lemma}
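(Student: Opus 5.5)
We will give an explicit construction. Every $d$-set will consist of $d-1$ \emph{free} indices placed in fixed disjoint windows, plus one \emph{completing} index that is forced by the sum condition. The whole difficulty is bookkeeping: the completing indices must be distinct from each other and from all free indices.

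Fix $d\ge2$ and a large $s$, and put $m:=\lfloor s/d\rfloor$, the diagonal value. Let $\delta:=1/(4d^2)$ and $M:=\lfloor \delta s\rfloor$. For $j=1,\dots,d-1$ and a parameter $k\in\{1,\dots,M\}$, set
\[
a_j(k):=m-jM+k .
\]
These free indices satisfy:
\begin{itemize}
\item $a_j(k)$ lies in the window $W_j:=(m-jM,\;m-(j-1)M]$;
\item the windows $W_1,\dots,W_{d-1}$ are pairwise disjoint;
\item as $k$ varies, each $a_j(k)$ runs over its window injectively.
\end{itemize}

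The completing index is
\[
b(k):=s-\sum_{j=1}^{d-1}a_j(k)=s-(d-1)m+M\frac{(d-1)d}{2}-(d-1)k .
\]
Since $s-(d-1)m\ge m$, we get
\[
b(k)\ge m+M\frac{d(d-1)}{2}-(d-1)M \ge m .
\]
This lower bound is strict as soon as $d\ge 3$ or $s$ is not too small relative to rounding. For $d=2$ there is one window, and $b(k)=s-m+M-k\ge m$. Writing $b(k)=m+\bigl((s-dm)+M-k\bigr)$ with $(s-dm)+M-k\ge0$, the only possible clash is $b(k)=m$, which can occur for at most one $k$; we discard it.

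In general all free indices are $\le m$, while $b(k)>m$ except possibly at one $k$, which we again discard. Moreover $b(k)$ is strictly decreasing in $k$, so the completing indices are pairwise distinct. Hence for $k=1,\dots,M$ the sets
\[
I_k=\{a_1(k),\dots,a_{d-1}(k),b(k)\}
\]
are $d$-element sets (the $a_j(k)$ are distinct because they sit in disjoint windows) with sum exactly $s$.

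To check pairwise disjointness of $I_k$ and $I_{k'}$ for $k\ne k'$:
\begin{itemize}
\item Free indices from different windows never coincide.
\item Free indices from the same window coincide only if $k=k'$.
\item Completing indices are distinct.
\item A completing index never equals a free index, because $b>m\ge a$.
\end{itemize}

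For the size bounds, every index satisfies
\[
i\ge m-(d-1)M\ge s/d-1-s/(4d)\ge s/(2d)
\]
for large $s$, and trivially $i\le s$ since $i$ is part of a sum of positive integers equal to $s$. The number of sets is at least $M-1\ge \delta s/2$. So the lemma holds with $c_d:=\min(\delta/2,1/(2d))$.

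The only delicate step is ruling out collisions between completing indices and free indices. The construction handles this through the separation $b>m\ge a$, and also requires discarding the single boundary value of $k$ where $b(k)=m$ might occur.
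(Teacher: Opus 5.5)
Your construction is correct and is essentially the paper's: $d-1$ free indices in disjoint windows shifted by a common parameter, a completing index that is strictly decreasing in that parameter, and a threshold separating free indices from completing ones. The only difference is where the tuples sit. The paper puts the free indices in $[M+1,dM]$ with $M\approx s/(10d^2)$, so all free indices lie below $s/2$ and all completing indices above it. This avoids the $d=2$ boundary collision $b(k)=m$ that your near-diagonal placement forces you to handle by discarding one value of $k$.
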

		
		\begin{proof}
			Let $ M :=\left\lfloor\frac{s}{10d^2}\right\rfloor$.
			For \(\ell=1,\ldots,M\), we define
			$$ i_k(\ell) := kM+\ell,
			\qquad 1\le k\le d-1,$$
			and
			$$ i_d(\ell)=s-\sum_{k=1}^{d-1}i_k(\ell)
			=s-\frac{d(d-1)}2M-(d-1)\ell. $$
			Then, \(\sum_{k=1}^d i_k(\ell)=s\) and for \(s\) large, all these integers are positive.  More precisely, the first \(d-1\) indices are between \(M+1\) and \(dM+M\), hence are bounded from below by $\frac{1}{20d^2} s$ for large $s$.  Also,
			\[ i_d(\ell)\ge s-\frac{d(d-1)}2M-(d-1)M
			\geq s-d^2M\ge \frac{9s}{10}
			\]
			for our choice of \(M\), after adjusting constants for the floor.  Thus all indices are bounded  below by $\frac{1}{20d^2} s$
			
			The indices \(i_k(\ell)\) with \(1\le k\le d-1\) lie in the disjoint intervals \([kM+1,kM+M]\), so they are all distinct as \((k,\ell)\) varies.  The values \(i_d(\ell)\) are strictly decreasing in \(\ell\), hence are distinct.  Finally, for large \(s\), all the last indices \(i_d(\ell)\) exceed \(s/2\), whereas all the preliminary indices are less than \(s/2\).  Thus the sets
			\[
			I_\ell=\{i_1(\ell),\ldots,i_d(\ell)\},
			\qquad 1\le\ell\le M,
			\]
			are pairwise disjoint. 
		\end{proof}
		
		\begin{proof}[Proof of Corollary~\ref{cor:polynomial-log}]
			Assume \eqref{eq:polylogcondition}.  By Lemma \ref{lem:manytuples}, for every sufficiently large \(s\) there is a disjoint family \(\mathcal I_s\in\mathfrak F_d(s)\) with \(\#\mathcal I_s\ge c_ds\), and every index appearing in this family is comparable to \(s\).  Therefore, for every \(I\in\mathcal I_s\),
			\[ \prod_{i\in I}v_i \geq C_d A^d\frac{\log s}{s}\]
			for a constant \(C_d>0\).  Hence
			\[R_d'(s) \geq \sum_{I\in\mathcal I_s}\prod_{i\in I}v_i
			\geq C'_dA^d\log s.\]
			Let \(c_P\) be the contraction constant from Proposition \ref{thm:polynomial}.  If $A$ is large enough that $c_PC'_dA^d>3$, then for all sufficiently large \(s\),
			\[ s\exp(-c_PR_d'(s))\leq s\cdot s^{-3}=s^{-2}.\]
			Thus \eqref{eq:Rdsummability} holds, and Proposition \ref{thm:polynomial} applies.
		\end{proof}

		\section{Proof of Theorem~\ref{thm:main}}\label{sec:proof2}

		The proof has four parts. First we record several analytic lemmas which set the foundation for the whole argument. In particular, we sharpen our mixed difference estimate from the previous proof and provide exponential bounds for triangular multilinear forms. In the second part, we give the recipe how to extract approximate triangular forms from $d$-tuples with a sum constraint. Our analytic estimates are formulated for fair Bernoulli and in the third part, we give a simple decomposition which allows us to use bound for fair Bernoullis even in our highly biased setting. All these preparations terminate in the core Fourier decay estimate, which is the most challenging part of the argument. The proof is then again concluded by an application of  Weyl's criterion.
			
		\subsection{An analytic toolbox}
			
		We first record a variant of the well-known  Davenport--Erdos--LeVeque criterion for the convergence of series. This allows us later to establish Weyl's criterion under weaker assumptions compared to the summability criterion in Proposition~\ref{prop:normalitycriterion}.
			
		\begin{lemma}[Davenport--Erdos--LeVeque]
			\label{lem:DEL}
			Let \((Y_n)_{n\ge0}\) be complex random variables with \(|Y_n|\le1\), and consider the partial sums $S_N :=\sum_{n=0}^{N-1}Y_n$.
			If
			\begin{equation}\label{eq:DELcrit}
			\sum_{N=2}^{\infty}\frac{1}{N^3}\mathbb E|S_N|^2<\infty,
			\end{equation}
			then $ \frac{S_N}{N}\longrightarrow 0$ almost surely.
			\end{lemma}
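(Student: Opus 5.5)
The plan is the classical Davenport--Erd\H{o}s--LeVeque argument: a dyadic subsequence handled by Borel--Cantelli, followed by a bound on the oscillation between consecutive dyadic times. Since $|Y_n|\le1$, the increments of $S_N$ are bounded, but a geometric subsequence does not make $(N_{k+1}-N_k)/N_k$ small. I will therefore use a subsequence $N_k$ with $N_{k+1}/N_k\to1$, and extract summability along it from \eqref{eq:DELcrit} through a weighted averaging trick.

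First, on each dyadic block $[2^j,2^{j+1})$, \eqref{eq:DELcrit} gives
\[
\sum_{j}\frac{1}{2^{3j}}\sum_{2^j\le N<2^{j+1}}\mathbb E|S_N|^2<\infty .
\]
Fix $\varepsilon>0$. For each $j$, I will pick a grid of roughly $1/\varepsilon$ times inside the block, spaced about $\varepsilon 2^j$ apart. Within each window of length $\varepsilon 2^j$ around a grid point, I choose a time $N$ minimizing $\mathbb E|S_N|^2$. The minimum is at most the window average, so
\[
\mathbb E|S_N|^2\le (\varepsilon2^j)^{-1}\sum_{\text{window}}\mathbb E|S_M|^2 .
\]
Call the selected times $N_{j,1}<N_{j,2}<\dots$; they lie within $2\varepsilon2^j$ of each other, and the windows are disjoint. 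Then
\[
\sum_{j,i}\mathbb E\Bigl|\frac{S_{N_{j,i}}}{N_{j,i}}\Bigr|^2\le \sum_j \frac{C}{\varepsilon\,2^{3j}}\sum_{2^j\le M<2^{j+1}}\mathbb E|S_M|^2<\infty .
\]
By Markov's inequality and Borel--Cantelli, or simply because a series of nonnegative variables with finite expectation is almost surely finite, we get $S_{N_{j,i}}/N_{j,i}\to0$ almost surely along this subsequence.

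Second, for general $N$ with $2^j\le N<2^{j+1}$, take the nearest selected time $N'=N_{j,i}$, so that $|N-N'|\le 4\varepsilon 2^j$. Using $|Y_n|\le1$,
\[
\Bigl|\frac{S_N}{N}\Bigr|\le \frac{|S_{N'}|}{N'}\cdot\frac{N'}{N}+\frac{|N-N'|}{N}\le 2\frac{|S_{N'}|}{N'}+4\varepsilon .
\]
Hence $\limsup|S_N|/N\le4\varepsilon$ almost surely. Intersecting over $\varepsilon=1/m$, $m\in\N$, yields $S_N/N\to0$ almost surely.

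The only delicate point is the selection step. The selected times must be dense enough, at relative spacing $O(\varepsilon)$, and the windows must be disjoint so that their sums do not double count. Splitting each block into about $1/\varepsilon$ consecutive windows of equal length handles both requirements. The endpoints of a block, including the last window, need minor care, which is covered by the factor in the spacing bound $|N-N'|\le4\varepsilon2^j$.
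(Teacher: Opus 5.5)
Your proof is correct, but it takes the classical Davenport--Erd\H{o}s--LeVeque route rather than the paper's. You construct a deterministic subsequence: in each dyadic block you take minimizers of $\mathbb E|S_N|^2$ over windows of length about $\varepsilon 2^j$. Along this subsequence the second moments are summable, which gives almost sure convergence there. You then interpolate using $|S_N-S_{N'}|\le|N-N'|$ and intersect over $\varepsilon=1/m$. Because the selected times minimize an expectation, they do not depend on $\omega$, and this is what makes your Tonelli/Borel--Cantelli step legitimate; you handle it correctly.

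The paper avoids selecting a subsequence. It applies Tonelli once to the whole series to get $\sum_N |A_N|^2/N<\infty$ almost surely, where $A_N=S_N/N$, and then argues pathwise. Since $|Y_n|\le 1$, any $N$ with $|A_N|\ge\varepsilon$ forces $|A_M|\ge\varepsilon/2$ for all $M\in[N,(1+\varepsilon/8)N]$. Each such $N$ therefore contributes at least a fixed amount of order $\varepsilon^3$ to the convergent series, so there cannot be infinitely many of them (with disjoint intervals).

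The paper's version is shorter and needs no window bookkeeping or endpoint care. Yours isolates the actual DEL mechanism: summability along a sequence with $N_{k+1}/N_k\to 1$, combined with bounded increments. That form still works if one only controls second moments along sparse times, whereas the paper's argument needs the full weighted series. Both arguments use $|Y_n|\le 1$ at the same point, to transfer smallness or largeness of $|A_N|$ between indices at relative distance $O(\varepsilon)$.
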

			
		\begin{proof}
			We abbreviate  \(A_N=S_N/N\) and the assumption \eqref{eq:DELcrit} reads $\sum_{N=2}^{\infty}\frac1N\mathbb E|A_N|^2<\infty$.
			Fubini's theorem implies
			\begin{equation}\label{eq:DELcrit2}
			\sum_{N=2}^{\infty}\frac{|A_N|^2}{N}<\infty \qquad\text{almost surely.}
			\end{equation}
			We show that \eqref{eq:DELcrit2}, together with \(|Y_n|\le1\), implies \(A_N\to0\) almost surely.
				
			Fix $0<\varepsilon\leq1$ and suppose that $|A_N|\geq\varepsilon$. Let $M$ be an integer with 
			$N\le M\le \left(1+\frac{\varepsilon}{8}\right)N$. 
			Then, 
			\begin{align*}
				|A_M|&=\left|\frac{S_N+(S_M-S_N)}{M}\right|  
				\geq \frac{N}{M}|A_N|-\frac{M-N}{M}  \\
				&\geq\frac{\varepsilon}{1+\varepsilon/8}-\frac{\varepsilon/8}{1+\varepsilon/8}
				\geq \frac{\varepsilon}{2}.
			\end{align*}
			Thus every index $N$ with $|A_N|\geq\varepsilon$   forces a whole multiplicative interval of indices on which at least $|A_M|\ge\varepsilon/2$.
			If there are infinitely many indices with $|A_N|\geq\varepsilon$, we may choose an infinite disjoint subcollection of intervals $[N, \lfloor(1+\varepsilon/8)N\rfloor]$. On each chosen interval,
			$$\sum_{M=N}^{\lfloor(1+\varepsilon/8)N\rfloor}\frac{|A_M|^2}{M}\geq \frac{\varepsilon^3}{32(1+\varepsilon/8)}>0.$$
			In view of \eqref{eq:DELcrit2}, we conclude that with full probability there are only finitely many $N$ with $|A_N|\geq\varepsilon$.  Since $\varepsilon>0$ is arbitrary, $A_N\to0$ almost surely.
			\end{proof}
			
		In the  quadratic case, we used in the proof of Proposition~\ref{prop:quadraticFourier}, a kind of differencing argument to transform the coupled quadratic exponent into a  multilinear factorized expression. The following result can be seen as the theoretical framework for this approach. Let $q\geq 1$ be an integer and for \(1\le j\le q\) and \(1\le k\le L\), let \(Z_{j,k}\) be independent fair Bernoulli variables. Recall that every map $\Phi((Z_{j,k})_{j,k})$ acting on Bernoullis  has a unique multilinear representative since $Z_{j,k} \in \{0,1\}$.
			
		\begin{lemma}\label{lem:GCS}
			Let $Q$ be a real multi-variate polynomial of degree at most $q$ in the variables \(Z_{j,k}\) -- written in multilinear form. For \(1\le k_1,\ldots,k_q\le L\), let \(A(k_1,\ldots,k_q)\) be the coefficient of the monomial $Z_{1,k_1}Z_{2,k_2}\cdots Z_{q,k_q}$.
			Let \(U_{j,k}\) be independent random variables with law
			\[\mathbb P(U_{j,k}=1)=\mathbb P(U_{j,k}=-1)=\frac14, \qquad
			\mathbb P(U_{j,k}=0)=\frac12.\]
			Then
			\begin{equation}\label{eq:mixedpro}
				\left|\mathbb E e(Q(Z))\right|^{2^q}
				\leq \left|\mathbb E e\!\left(\sum_{k_1,\ldots,k_q=1}^{L}
				A(k_1,\ldots,k_q) \prod_{j=1}^{q}U_{j,k_j}\right)\right|.
				\end{equation}
		\end{lemma}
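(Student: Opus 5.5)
The proof is an iterated van der Corput / Gowers--Cauchy--Schwarz differencing, by induction on $q$. Each round applies Jensen/Cauchy--Schwarz in all variables except one group $Z_{j,\cdot}$, and then doubles that group. At the end only the top multilinear coefficient survives, multiplied by differences of fair Bernoulli variables. The key observation is that if $Z,Z'$ are independent fair Bernoulli variables, then $Z-Z'$ has exactly the law of $U$: it takes the values $\pm1$ with probability $1/4$ each and $0$ with probability $1/2$.

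\textbf{Step 1 (one differencing step).} Single out the group $j=q$ and write
\[
Q(Z)=Q_0(Z^{(<q)})+\sum_{k}Z_{q,k}\,Q_k(Z^{(<q)}),
\]
where $Z^{(<q)}$ denotes the variables of groups $1,\dots,q-1$. No higher powers of $Z_{q,k}$ appear because $Z\in\{0,1\}$, and we additionally group together all terms containing two or more variables from group $q$ into a term $Q_{\ge2}$.

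A cleaner route is to condition on $Z^{(<q)}$ first and treat $Q$ as a function of $Z_q:=(Z_{q,k})_k$. Then
\[
|\mathbb E e(Q)|^2\le \mathbb E_{Z_q,Z_q'}\Bigl|\mathbb E_{Z^{(<q)}} e\bigl(Q(Z^{(<q)},Z_q)-Q(Z^{(<q)},Z_q')\bigr)\Bigr|
\]
by Jensen followed by expanding the square, as in the quadratic proof. For fixed $Z_q,Z_q'$, the phase $Q(\cdot,Z_q)-Q(\cdot,Z_q')$ is a multilinear polynomial in $Z^{(<q)}$. Its part of degree at most $q-1$ involving one variable from each of groups $1,\dots,q-1$ has coefficient
\[
\sum_{k_q}A(k_1,\dots,k_q)\,(Z_{q,k_q}-Z'_{q,k_q}),
\]
plus contributions from monomials of $Q$ that contain several variables of the same group.

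\textbf{Step 2 (induction and the main obstacle).} Applying the induction hypothesis with $q-1$ to $Q(\cdot,Z_q)-Q(\cdot,Z_q')$ then gives
\[
\Bigl|\mathbb E_{Z^{(<q)}}e(\cdots)\Bigr|^{2^{q-1}}\le\Bigl|\mathbb E_U e\Bigl(\sum A(k_1,\dots,k_q)\,\Delta_{k_q}\prod_{j<q}U_{j,k_j}\Bigr)\Bigr|,
\]
with $\Delta:=Z_q-Z_q'$. This is the main obstacle: the induction hypothesis must only see coefficients of "one variable per group" monomials. So the statement has to be read with the convention (implicit in "degree at most $q$") that the coefficient $A$ is that of the monomial $Z_{1,k_1}\cdots Z_{q,k_q}$ alone, and any other monomials of degree at most $q$ are lower order with respect to the group structure.

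I would handle this by proving the slightly stronger, standard form of the claim. If every monomial of $Q$ either misses some group $j$ or is of the form $\prod_j Z_{j,k_j}$ times products within groups, the differencing annihilates every term that misses a group. Terms using two variables from one group and missing another are killed exactly when we difference in the missing group. Ordering the differencing over the $q$ groups, every monomial of total degree at most $q$ that is not of the form "one variable per group" misses at least one group, because of the pigeonhole principle on degree at most $q$. It is therefore removed at the step where that group is differenced. The induction must be arranged so that a term missing group $j$ is eliminated at step $j$; this is why I difference group by group.

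\textbf{Step 3 (combining).} Raise the Step 1 bound to the power $2^{q-1}$ and use Jensen (Hölder) to move the power inside $\mathbb E_{Z_q,Z_q'}$. Then insert the Step 2 estimate and use that $\Delta_{k}$ has the law of $U_{q,k}$ and is independent of the other $U_{j,\cdot}$. This yields
\[
|\mathbb E e(Q)|^{2^q}\le \mathbb E_{U_q}\Bigl|\mathbb E_{U_{<q}}e\Bigl(\sum A\prod_j U_{j,k_j}\Bigr)\Bigr|.
\]

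To obtain the absolute value outside the full expectation, as stated in \eqref{eq:mixedpro}, I would note that after all $q$ differencings the final expression is $\mathbb E\,e(\cdot)$ over fully symmetrized variables. Each difference step produces a nonnegative quantity $\mathbb E|\cdot|^2$, so the iterated expression equals $\mathbb E\,e(\sum A\prod_j U_{j,k_j})$ itself, which is real and nonnegative. Hence the absolute values can be dropped at the innermost stage, and the bound follows.
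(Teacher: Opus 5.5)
Your proof is correct and is essentially the paper's argument. The shared ingredients are Cauchy--Schwarz in one group of variables at a time, the fact that $Z-Z'$ has the law of $U$, and the degree count showing that every monomial which is not one-variable-per-group misses some group and is annihilated. You organize this as an induction on $q$, whereas the paper proves the single box inequality $|\mathbb E F|^{2^q}\le \mathbb E\,\mathcal D_qF$ and then evaluates the alternating sum.

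Two points are worth tightening. First, the paper's ordering keeps each intermediate quantity $\mathbb E\,\mathcal D_rF$ nonnegative with no inner absolute values, while your induction needs the extra positivity step at the end. That step is valid for $q\ge2$: conditionally on all groups but one, the phase is linear in that group, and $\mathbb E\,e(cU)=\cos^2(\pi c)\ge0$. So you should verify $q=1$ directly as the base case, since your Step 1 is vacuous there. Second, the ``obstacle'' in your Step 2 is not real. The group-$q$ derivative already has degree at most $q-1$, and its one-per-group coefficients are exactly $\sum_{k_q}A(k_1,\ldots,k_q)\Delta_{k_q}$, so the lemma as stated applies without any strengthening.
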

			
		\begin{proof}
			 We abbreviate $Z_j :=(Z_{j,1},\ldots,Z_{j,L})$, and set $F(Z_1,\ldots,Z_q) :=e(Q(Z_1,\ldots,Z_q))$.
			Let \(Z_j^0,Z_j^1\) be two independent copies of \(Z_j\). We claim that the following inequality
			\begin{equation}\label{eq:boxineq2}
				|\mathbb E F|^{2^q} \leq  \mathbb E_{Z^0,Z^1}
				\prod_{\omega\in\{0,1\}^q} \mathcal C^{|\omega|}
				F(Z_1^{\omega_1},\ldots,Z_q^{\omega_q}),
			\end{equation}
			holds. Here, \(\mathcal C\) is understood as the anti-linear operator $z \mapsto \bar{z}$ mapping a number to its complex conjugate. Note that the superscript $\omega_k$ identifies which of the two copies $Z^0$ or $Z^1$ is used. To prove \eqref{eq:boxineq2}, we proceed via an iterative argument. For
			$0\le r\le q$, define
			\[ \mathcal D_rF:=
			\prod_{\omega\in\{0,1\}^r}\mathcal C^{|\omega|}
			F\bigl(Z_1^{\omega_1},\ldots,Z_r^{\omega_r},Z_{r+1},\ldots,Z_q
			\bigr),
			\]
			where for $r=0$ this is interpreted as \(\mathcal D_0F=F(Z_1,\ldots,Z_q)\). 
			We claim that, for \(1\le r\le q\),
			\begin{equation}\label{eq:indbox}
			|\mathbb E \mathcal D_{r-1} F |^2 \leq \mathbb E \mathcal D_{r} F|.        
			\end{equation}
			Indeed, let \(\mathcal R_r\) denote all variables appearing in
			\(\mathcal D_{r-1}F\) except for the single vector \(Z_r\).  Then
			\[ \mathbb E \mathcal D_{r-1} F =
			\mathbb E_{\mathcal R_r} \mathbb E_{Z_r}\mathcal D_{r-1}F .\]
			By Cauchy--Schwarz,
			\[ |\mathbb E \mathcal D_{r-1} F|^2
			\leq \mathbb E_{\mathcal R_r}
			\left| \mathbb E_{Z_r}\mathcal D_{r-1}F\right|^2 .\]
			Expanding the square with two independent copies \(Z_r^0,Z_r^1\) of \(Z_r\)
			gives
			\[\mathbb E_{\mathcal R_r}\left|
				\mathbb E_{Z_r}\mathcal D_{r-1}F\right|^2
				=\mathbb E_{\mathcal R_r,Z_r^0,Z_r^1} \mathcal D_{r-1}F(\ldots,Z_r^0,\ldots)\overline{
				\mathcal D_{r-1}F(\ldots,Z_r^1,\ldots)}  
				=\mathbb E\,\mathcal D_rF
			\]
			This proves \eqref{eq:indbox}.  Iterating this bound yields
			$$ |EF|^{2^q}=|\mathcal D_0F|^{2^q}\le \mathcal D_qF.$$
			This is exactly the desired bound \eqref{eq:boxineq2}.
			
			We continue with the main proof. The right-hand side of \eqref{eq:boxineq2} equals
			$$ \mathbb E_{Z^0,Z^1}e\!\left( \sum_{\omega\in\{0,1\}^q}(-1)^{|\omega|}Q(Z_1^{\omega_1},\ldots,Z_q^{\omega_q})\right).$$
			In the alternating sum, every monomial missing at least one of the $q$ groups vanishes. Since $Q$ has degree at most $q$, only monomials containing exactly one variable from each group survive. Such a monomial $A(k_1,\ldots,k_q)Z_{1,k_1}\cdots Z_{q,k_q}$ contributes in the alternating sum
			$A(k_1,\ldots,k_q) \prod_{j=1}^{q}\bigl(Z_{j,k_j}^{0}-Z_{j,k_j}^{1}\bigr)$.
			 We observe that
			the random variables \(Z_{j,k}^{0}-Z_{j,k}^{1}\) are mutually independent and have the same distribution as $U_{j,k}$ from the statement of the lemma, which completes the proof.
			\end{proof}
			
		In the quadratic case, we were able to estimate the right-hand side of \eqref{eq:mixedpro} by a simple separation trick followed by iterative conditioning. For higher degree polynomials, the geometry of $d$-fold interactions is more involved. Moreover, the $d$-elements might overlap. The first step is to consider an ideal scenario, where these obstructions can be handled. Namely, we estimate the expectation in \eqref{eq:mixedpro} under a triangular non-degeneracy condition.
		We use the lexicographic order on the set of finite words $\{1,\ldots,L\}^{q-1}$. That is
		$(k_1,\ldots,k_{q-1})\succ(\ell_1,\ldots,\ell_{q-1})$
		means that at the first coordinate where the two vectors differ, the former vector has the larger entry.
			
		\begin{prop}\label{lem:triangular}
			Fix an integer \(q\ge1\) and a real number \(0<\rho<1/2\). There exist constants $c=c(q,\rho)>0$ and $C=C(q,\rho)<\infty$ with the following property.
			Let \(A(k_1,\ldots,k_q)\), \(1\le k_1,\ldots,k_q\le L\), be real coefficients satisfying
			\begin{align}
				&\distZ{A(k,k,\ldots,k)}\ge\rho \qquad(1\le k\le L) \label{eq:tri1}, \\
				\text{and} \quad &A(k_1,\ldots,k_q)=0
				\quad\text{if}\quad (k_1,\ldots,k_{q-1})\succ(k_q,\ldots,k_q).
				\label{eq:tri2}
			\end{align}
				Let \(U_{j,k}\) be as in Lemma~\ref{lem:GCS}. Then
			\begin{equation}\label{eq:triestimate}
				\left|\mathbb E e\!\left(\sum_{k_1,\ldots,k_q=1}^{L} A(k_1,\ldots,k_q) \prod_{j=1}^{q}U_{j,k_j}\right)\right|
				\leq C e^{-cL}.
			\end{equation}
		\end{prop}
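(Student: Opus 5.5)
We induct on $q$. Each step peels off the last group $U_{q,\cdot}$ by conditioning on the first $q-1$ groups, and then shows that a positive fraction of the resulting linear coefficients stay away from $\Z$.

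For fixed values of $U_1,\dots,U_{q-1}$, the exponent is linear in the independent variables $U_{q,k}$:
\[
\sum_{k=1}^L H_k\,U_{q,k},\qquad H_k:=\sum_{k_1,\ldots,k_{q-1}}A(k_1,\ldots,k_{q-1},k)\prod_{j<q}U_{j,k_j}.
\]
Since $\E e(uU)=\tfrac12+\tfrac12\cos(2\pi u)=1-\sin^2(\pi u)\le \exp(-4\|u\|^2)$ by Lemma~\ref{lem:sinbound}, we get
\[
|\E e(\cdot)|\le \E\exp\Bigl(-4\sum_k\|H_k\|^2\Bigr).
\]
It therefore suffices to show that, with probability at least $1-Ce^{-cL}$, a positive fraction of the indices $k$ satisfy $\|H_k\|\ge \rho'$ for some $\rho'=\rho'(q,\rho)$.

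The triangular condition \eqref{eq:tri2} controls which $U_{j,\cdot}$ enter $H_k$. Only words $(k_1,\dots,k_{q-1})\preceq(k,\dots,k)$ contribute. Consequently $H_k$ splits as
\[
H_k=A(k,\ldots,k)\prod_{j<q}U_{j,k}+\tilde R_k,
\]
where every other contributing word is lexicographically strictly smaller than $(k,\dots,k)$. We would like $\tilde R_k$ to be measurable with respect to variables ``revealed before step $k$''. The natural filtration reveals, at step $k$, the variables $U_{1,k},\dots,U_{q-1,k}$, with steps ordered $k=1,\dots,L$. The difficulty is that a word smaller than $(k,\dots,k)$ may still contain large indices in later coordinates, for instance $(k-1,L,\dots)$. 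Triangularity in the lex order therefore does not give a clean adapted structure.

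To handle this, we modify the plan and use the last-group trick iteratively, as the quadratic proof did. We process $k$ in decreasing order and exploit that the top word $(k,\dots,k)$ dominates the words with first coordinate $k$. Concretely, we condition on all $U_{j,k'}$ with $k'\ne k$ and ask for the probability that $\|H_k\|\ge\rho/4$. As a function of $(U_{1,k},\dots,U_{q-1,k})\in\{-1,0,1\}^{q-1}$, $H_k$ is a multilinear polynomial. Its top coefficient, that of $\prod_j U_{j,k}$, equals $A(k,\dots,k)$ plus contributions from words that repeat $k$ in every coordinate; by \eqref{eq:tri2} the only such word is $(k,\dots,k)$ itself. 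Now a multilinear polynomial in $\{-1,0,1\}$-valued variables whose top coefficient has $\|\cdot\|\ge\rho$ cannot take values within $\rho/2^{q}$ of $\Z$ at all $3^{q-1}$ points. This follows from finite differencing: the top coefficient is a signed combination of $2^{q-1}$ point values. Hence, conditionally on everything else, $\Prob(\|H_k\|\ge\rho 2^{-q}\mid\cdots)\ge 3^{-(q-1)}$. This lower bound is a uniform constant, but the events for different $k$ are not independent.

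The main obstacle is to convert these conditional probabilities into an exponential bound $e^{-cL}$. The conditioning for index $k$ involves the variables of index $k$, which also influence $H_{k'}$. We would first try to bound the number of bad indices by a union bound over sets $B$ of size at least $(1-\delta)L$ of bad indices. For a fixed $B$, we process the indices of $B$ one at a time in an order adapted to the lex structure, and at each step use the conditional bound above with all remaining variables frozen. This requires the event ``$k$ bad'' to become determined once the variables of index $k$ are revealed, given the earlier ones; that holds if we reveal in decreasing $k$ and only use words with all coordinates $\le k$.

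If overlapping dependence breaks this, the fallback is the $q=1$ base case followed by an induction that applies Lemma~\ref{lem:GCS}-style Cauchy--Schwarz to remove one group at a time. This would cost a factor $2$ in the exponent per step, which is absorbed into $c(q,\rho)$. Combining, the expected value of $\exp(-4\sum_k\|H_k\|^2)$ is at most
\[
e^{-c\delta L}+\binom{L}{\delta L}(1-3^{1-q})^{(1-\delta)L}\le Ce^{-cL}
\]
for $\delta$ small.
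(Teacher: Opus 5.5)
Your first reduction matches the paper's final step: condition on groups $1,\dots,q-1$, note that the $U_q$-average is $\prod_k\cos^2(\pi H_k)$, and reduce to showing that with probability $1-Ce^{-cL}$ a positive fraction of the $H_k$ stay away from $\Z$. Your finite-difference observation is also correct, but the constant should be $4^{1-q}$, not $3^{1-q}$, since a point of $\{0,1\}^{q-1}$ has probability at least $4^{1-q}$.

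The gap is the step you flag yourself. A bound on $\Prob(\|H_k\|<\eta\mid U_{j,k'},\,k'\neq k)$ cannot be multiplied over a set $B$ of indices, because whether $k'\in B$ is bad also depends on the block of index $k$. For fair bits $V_1,\dots,V_L$, each event $\{V_k\equiv\sum_{j\neq k}V_j \bmod 2\}$ has conditional probability $1/2$ given the other bits. Yet all these events coincide with $\{\sum_j V_j\equiv 0\}$ and occur with probability $1/2$.

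Your repair needs $H_k$ to involve only words with all coordinates $\le k$, but \eqref{eq:tri2} is merely lexicographic. As you noticed, words like $(k-1,L,\dots,L)$ contribute, so no ordering of the blocks $(U_{1,k},\dots,U_{q-1,k})$ makes $H_k$ adapted. With decreasing revelation, even words with all coordinates $\le k$ would involve unrevealed smaller blocks. The Cauchy--Schwarz fallback lowers nothing: the form already has exactly one variable per group, so differencing in a group only duplicates variables. Hence the estimate $\binom{L}{\delta L}(1-3^{1-q})^{(1-\delta)L}$ is unsupported.

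The missing idea is to filter by groups rather than by index blocks, i.e.\ to do your finite differencing one group at a time. Let $H_k^{(r)}$ be the sum of $A$ with the first $r$ coordinates and the last coordinate fixed to $k$, over lexicographically admissible remaining words, weighted by $\prod_{\ell>r}U_{\ell,i_\ell}$. Then $H_k^{(0)}$ is your $H_k$ and $H_k^{(q-1)}=A(k,\dots,k)$.

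The coordinate after the $r$ fixed copies of $k$ is either $<k$, leaving the rest unconstrained, or $=k$. So \eqref{eq:tri2} gives $H_k^{(r)}=R_k^{(r)}+U_{r+1,k}H_k^{(r+1)}$, where $R_k^{(r)}$ depends only on $U_{r+1,j}$ for $j<k$ and on groups $r+2,\dots,q-1$. Conditioning on those later groups and revealing group $r+1$ in increasing $k$ is therefore genuinely adapted.

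If $\|H_k^{(r+1)}\|\ge\delta$, one of $R_k^{(r)}$, $R_k^{(r)}\pm H_k^{(r+1)}$ lies at distance $\ge\delta/3$ from $\Z$, giving conditional probability $\ge 1/4$. Stochastic domination plus Hoeffding then shows that, off an event of probability $e^{-c|G|}$, at least an eighth of the current active set $G$ survives. Iterating from $r=q-1$ down to $r=0$ gives at least $8^{1-q}L$ indices with $\|H_k^{(0)}\|\ge 3^{1-q}\rho$, with probability $1-Ce^{-cL}$. That is exactly what your first reduction needs.
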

		Recall that $\|\cdot \|$ denotes the distance to $\Z$. The proof is based on a more clever way to condition than a simple freezing procedure.
		\begin{proof}
			We first prove a one-step propagation estimate. Let \(U_1,\ldots,U_L\) be independent with the same law as \(U_{j,k}\) from above, and let \(\mathcal G\) be a sigma-algebra independent of them. Consider for $1 \leq k \leq L$ random variables $D_k$ which are \(\mathcal G\)-measurable and \(R_k\) which shall be  measurable with respect to the generated $\sigma$-algebra $ \sigma(\mathcal G, U_1,\ldots,U_{k-1})$.
			Let \(G\subset\{1,\ldots,L\}\) be \(\mathcal G\)-measurable (to be more precise, $G$ is a set-valued random variable), and suppose that 
			\[\distZ{D_k}\ge\delta \qquad(k\in G)\]
			for some \(0<\delta<1/2\). Then
			\begin{equation}\label{eq:propagation}
				\mathbb P\left(|\{k\in G:\distZ{R_k+U_kD_k}\ge\delta/3\}|
				<\frac{|G|}{8} \ \middle|\ \mathcal G \right)
				\leq e^{-c_0|G|},
			\end{equation}
			with an absolute constant \(c_0>0\). 
			Indeed,  consider  $k \in G$ and condition on $\sigma(\mathcal G, U_1,\ldots,U_{k-1})$. If the three points
			\[ R_k,\qquad R_k+D_k,\qquad R_k-D_k\]
			were within distance \(<\delta/3\) of \(\mathbb Z\), then \(D_k\) would be within distance \(<2\delta/3\) of \(\mathbb Z\), contradicting \(\distZ{D_k}\ge\delta\). Hence at least one of the three possible values has distance at least $\delta/3$ from $\Z$. In view of the law of \(U_k\), the conditional probability of the event $\distZ{R_k+U_kD_k} \geq \delta/3$ is at least $\frac14$.
			We abbreviate \(I_k :=\mathbf 1_{\{\|R_k+U_kD_k\|\ge \delta/3\}}\). Conditionally on
			\(\mathcal G\) the set \(G=\{k_1<\cdots<k_m\}\) is fixed, and we have thus established the conditional bound 
			\[ \mathbb P(I_{k_r}=1\mid \mathcal G,U_1,\ldots,U_{k_r-1})\geq 1/4
			\qquad (1\le r\le m).
			\]
			Thus the adapted sequence \((I_{k_r})_{r=1}^m\) stochastically dominates an
			i.i.d. Bernoulli sequence \((Z_r)_{r=1}^m\) with success probability $\frac14$, whence
			\[ \mathbb P\left(\sum_{k\in G} I_k<|G|/8\mid\mathcal G\right)
			\leq \mathbb P\left(\sum_{r=1}^{|G|}Z_r<|G|/8\right).\]
			A standard concentration estimate, e.g. using Hoeffding's inequality, yields the desired bound from  \eqref{eq:propagation}.
				
			We turn to the proof the lemma. The case \(q=1\) is immediate, since
			$$\mathbb E e(A(k)U_{1,k}) =\frac12+\frac14e(A(k))+\frac14e(-A(k))=\cos^2(\pi A(k)),$$
			where we used the trigonometric identity $1 + \cos(2x) = 2 \cos^2(x)$. Therefore
			$$ \prod_{k=1}^{L}\left|\mathbb E e(A(k)U_{1,k})\right|
				\leq \left(\sup_{\distZ{x}\ge\rho}\cos^2(\pi x)\right)^L
				\leq e^{-cL}.$$
			Assume \(q\ge2\). For \(0\le r\le q-1\), we define
			\begin{align}
				\label{eq:Hk}
				H_k^{(r)} :=\sum_{\substack{1\le i_{r+1},\ldots,i_{q-1}\le L\\
				(\,\underbrace{k,\ldots,k}_{r\ \mathrm{times}},i_{r+1},\ldots,i_{q-1}\,)\preceq (k,\ldots,k)}} A(\underbrace{k,\ldots,k}_{r\ \mathrm{times}},
				i_{r+1},\ldots,i_{q-1},k)  \,
				\times \prod_{\ell=r+1}^{q-1}U_{\ell,i_\ell}.
			\end{align}
			When \(r=q-1\), we interpret the empty product as $1$ and obtain $H_k^{(q-1)}=A(k,\ldots,k)$.
			The assumption \eqref{eq:tri1} reads 
			\begin{equation}\label{eq:tri1'}
				\distZ{H_k^{(q-1)}}\ge\rho \qquad(1\le k\le L).
			\end{equation} 
			For $0\le r\le q-2$, the lexicographic condition \eqref{eq:tri2} implies the recursive form
			\begin{equation}\label{eq:Hrecurs}
				H_k^{(r)}=R_k^{(r)}+U_{r+1,k}H_k^{(r+1)},
			\end{equation}
			where \(R_k^{(r)}\) is measurable with respect to the variables
				\[ \{U_{r+1,j}:j<k\} \quad\text{and}\quad
				\{U_{\ell,j}:r+2\le \ell\le q-1,\ 1\le j\le L\}.\]
			This can be seen as follows. After fixing the first $r$  equal to $k$, the next index is either \(<k\) or equal to \(k\) since the case \(>k\) vanishes by \eqref{eq:tri2}. The case \(=k\) gives the term $U_{r+1,k}H_k^{(r+1)}$; and the case \(<k\) is absorbed into \(R_k^{(r)}\). The claims on the measurability are immediate.
				
			We start from \eqref{eq:tri1'} and the idea is to apply the one-step estimate \eqref{eq:propagation} successively for $r=q-2,q-3,\ldots,0$ to obtain the final bound 
			\begin{equation}\label{eq:Hind}
				\mathbb P\left( \left|\{k:\distZ{H_k^{(0)}}\ge\delta\}\right|\geq\kappa L
				\right) \geq1-C_1e^{-c_1L}.
			\end{equation}
			We now make the successive propagation argument precise.  Define the sequences 
			\[ \delta_{q-1}:=\rho,\qquad \delta_r:=\delta_{r+1}/3 \quad (0\le r\le q-2),
			\]
			and
			\[ \kappa_{q-1}:=1,\qquad \kappa_r:=\kappa_{r+1}/8 \quad (0\le r\le q-2).
			\]
			For $0\le r\le q-1$, we further put the the active index set put
			\[ G_r:=\{1\le k\le L:\|H_k^{(r)}\|\ge \delta_r\}.
			\]
			We claim that, for every \(0\le r\le q-1\),
			\begin{equation}\label{eq:propaind}
			\mathbb P\bigl(|G_r|\ge \kappa_r L\bigr)
			\ge 1-C_r e^{-c_r L},
			\end{equation}
			with constants \(C_r,c_r>0\) depending only on \(q\) and \(\rho\).
			
			For  $r=q-1$, this follows from $\|H_k^{(q-1)}\|\geq\rho$ for all
			$k$.  Suppose now that $1\le r+1\le q-1$ and that the claim \eqref{eq:propaind} has already been proved at level $r+1$.  Let
			$\mathcal A_{r+1}:=\{|G_{r+1}|\ge \kappa_{r+1}L\}$ be the event of interest.
			We condition on the sigma-algebra generated by all variables
			\[ \{U_{\ell,j}: r+2\le \ell\le q-1,\ 1\le j\le L\}.
			\]
			Under this conditioning, \(G_{r+1}\) and the coefficients
			\(D_k:=H_k^{(r+1)}\) are fixed, while the variables
			\((U_{r+1,k})_{k=1}^L\) remain independent and have the required law.Moreover, by the recursive representation
			$H_k^{(r)}=R_k^{(r)}+U_{r+1,k}H_k^{(r+1)}$,
			the term \(R_k^{(r)}\) is measurable with respect to the conditioned sigma-algebra together with \(U_{r+1,1},\ldots,U_{r+1,k-1}\).
			Therefore the one-step estimate \eqref{eq:propagation}, applied with active set \(G_{r+1}\),
			gives on \(\mathcal A_{r+1}\)
			\[ \mathbb P\left(|G_r\cap G_{r+1}|<\frac{|G_{r+1}|}{8}
			\,\middle|\,\{U_{\ell,j}: r+2\le \ell\le q-1\} \right)
			\leq e^{-c_0|G_{r+1}|}
			\leq e^{-c_0\kappa_{r+1}L}.\]
			On the complementary event we have
			\[ |G_r|\geq |G_r\cap G_{r+1}|
			\geq \frac{|G_{r+1}|}{8} \geq \frac{\kappa_{r+1}}8 L
			= \kappa_r L .
			\]
			Consequently,
			\[ \mathbb P(|G_r|<\kappa_r L)
			\leq \mathbb P(\mathcal A_{r+1}^c)
			+ e^{-c_0\kappa_{r+1}L}
			\leq C_{r+1}e^{-c_{r+1}L} + e^{-c_0\kappa_{r+1}L}.
			\]
			This proves \eqref{eq:propaind} at level $r$ with some absolute constants $c_r$ and $C_r$.  Iterating down to $r=0$ gives the desired bound 
			\[ \mathbb P\bigl(|G_0|\ge\kappa_0L\bigr) \geq 1-C_1e^{-c_1L}.
			\]
			This completes the proof of \eqref{eq:Hind}.
			
			To conclude the proof of the proposition, we condition on the first $q-1$ groups of variables $U_{j,k}$. By assumption, i.e. by \eqref{eq:tri2}, the coefficient of $U_{q,k}$ is exactly $H_k^{(0)}$. Hence
			\[ \sum_{k_1,\ldots,k_q} A(k_1,\ldots,k_q)\prod_{j=1}^{q}U_{j,k_j}
				= \sum_{k=1}^{L}U_{q,k}H_k^{(0)}.\]
			Taking the $U_q$ average, we obtain the exact identity
			\[ \left| \mathbb E_{U_q} e\!\left(\sum_{k=1}^{L}U_{q,k}H_k^{(0)}\right)
			\right| =\prod_{k=1}^{L}\cos^2(\pi H_k^{(0)}).\]
			On the event from $\eqref{eq:Hind}$, at least $\kappa L$ of these factors are bounded by $\tau :=\sup_{\distZ{x}\ge\delta}\cos^2(\pi x)<1$
			Thus, the conditional expectation is at most $\tau^{\kappa L}$ on that good event and of course at most $1$ always. Averaging and using the probabilistic estimate from $\eqref{eq:Hind}$  gives
			\[\left| \mathbb E e\!\left(\sum_{k_1,\ldots,k_q}
				A(k_1,\ldots,k_q)\prod_{j=1}^{q}U_{j,k_j}\right)\right|
				\leq \tau^{\kappa L}+C_1e^{-c_1L}
				\leq C e^{-cL}. \]
			This proves the proposition.
		\end{proof}
			
		Unfortunately, it is impossible to restrict the proof to perfect triangular structures as required in Proposition~\ref{lem:triangular}. To overcome this issue, we combine the previous two results to obtain a perturbative estimate for almost triangular multilinear forms. 
			
		\begin{lemma}\label{lem:finite-bias}
			Fix \(q\ge2\) and \(0<\rho<1/2\). There exist constants \(c=c(q,\rho)>0\) and \(C=C(q,\rho)<\infty\) such that the following holds.	
			Let \(Q\) be a real polynomial of degree at most $q$ in independent fair Bernoulli variables \(Z_{j,k}\), \(1\le j\le q\), \(1\le k\le L\), written in multilinear form. Let \(A(k_1,\ldots,k_q)\) be the coefficient of $Z_{1,k_1}\cdots Z_{q,k_q}.$
			Assume the diagonal estimate
			\begin{equation}\label{eq:diag}
			\distZ{A(k,k,\ldots,k)}\geq \rho \qquad(1\le k\le L).
			\end{equation}
			We define the future mass
			\begin{equation}\label{eq:mass}
				m := \sum_{\substack{1\le k_1,\ldots,k_q\le L\\
					(k_1,\ldots,k_{q-1})\succ(k_q,\ldots,k_q)}}
				|A(k_1,\ldots,k_q)|.
			\end{equation}
			Then,
			\begin{equation}\label{eq:finite-bias}
				\left|\mathbb E e(Q(Z))\right|
				\le
				C\bigl(e^{-cL}+m \bigr)^{2^{-q}}.
			\end{equation}
		\end{lemma}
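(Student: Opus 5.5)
We combine Lemma~\ref{lem:GCS} with Proposition~\ref{lem:triangular} through a perturbation of the multilinear form. First, Lemma~\ref{lem:GCS} applied to $Q$ gives
\[
\left|\mathbb E e(Q(Z))\right|^{2^q}\le \left|\mathbb E e\Bigl(\sum_{k_1,\ldots,k_q} A(k_1,\ldots,k_q)\prod_{j=1}^q U_{j,k_j}\Bigr)\right|.
\]
Next we split the coefficient array as $A=A^{\flat}+A^{\sharp}$. Here $A^{\sharp}$ keeps exactly the ``future'' coefficients with $(k_1,\ldots,k_{q-1})\succ(k_q,\ldots,k_q)$ and sets all others to zero, and $A^{\flat}:=A-A^{\sharp}$. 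By construction $A^{\flat}$ satisfies the triangularity condition \eqref{eq:tri2}. It also keeps all diagonal entries, since $(k,\ldots,k)\not\succ(k,\ldots,k)$, so \eqref{eq:diag} gives exactly \eqref{eq:tri1} for $A^{\flat}$ with the same $\rho$.

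Write $\Sigma^{\flat}$ and $\Sigma^{\sharp}$ for the two multilinear forms in the $U_{j,k}$. Since $|U_{j,k}|\le1$, we have $|\Sigma^{\sharp}|\le m$ pointwise. Combined with the elementary bound $|e(a+b)-e(a)|\le 2\pi|b|$, this yields
\[
\left|\mathbb E e(\Sigma^{\flat}+\Sigma^{\sharp})\right|\le \left|\mathbb E e(\Sigma^{\flat})\right|+2\pi m\le Ce^{-cL}+2\pi m,
\]
where the last step is Proposition~\ref{lem:triangular} applied to $A^{\flat}$. Taking $2^{-q}$-th roots and enlarging $C$ (using $C+2\pi\le C'$ and $C'^{2^{-q}}\le C'$) gives \eqref{eq:finite-bias}.

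The only point needing care is checking that the hypotheses of Proposition~\ref{lem:triangular} are exactly preserved. Removing the future entries must neither touch the diagonal nor break the lexicographic structure, and both follow directly from the definition of $\succ$.

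One might worry that the $U$-form coefficients differ from those of $Q$. They do not: Lemma~\ref{lem:GCS} transfers precisely the coefficients $A(k_1,\ldots,k_q)$ of the monomials $Z_{1,k_1}\cdots Z_{q,k_q}$, which are the same quantities used in defining $m$. So no real obstacle is expected; the lemma is essentially a bookkeeping corollary.
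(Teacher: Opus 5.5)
Your proposal is correct and follows the paper's proof essentially step for step. Both apply Lemma~\ref{lem:GCS}, zero out the future coefficients so that the remaining tensor satisfies the hypotheses of Proposition~\ref{lem:triangular}, absorb the removed part through $|U_{j,k}|\le 1$ and $|e(a+b)-e(a)|\le 2\pi|b|$ at a cost of $2\pi m$, and finally take $2^{q}$-th roots and adjust the constants.
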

			
		\begin{proof}
			Let
			$$T_A(U) := \sum_{k_1,\ldots,k_q=1}^{L}
				A(k_1,\ldots,k_q) \prod_{j=1}^{q}U_{j,k_j}. $$
			Let \(A_0\) be the coefficients obtained from \(A\) by setting to zero all future coefficients occurring in \eqref{eq:mass}, and let \(T_{A_0}\) be the corresponding multilinear form. Since \(|U_{j,k}|\le1\), we have pointwise
			$|T_A(U)-T_{A_0}(U)|\leq m. $
			Therefore
			\begin{equation}\label{eq:A-A0}
				\left|\mathbb E e(T_A(U))-\mathbb E e(T_{A_0}(U))\right|
				\leq 2\pi m.
			\end{equation}
			The tensor \(A_0\) satisfies the hypotheses of Proposition \ref{lem:triangular} since its diagonal coefficients are those of $A$, and coefficients not respecting the lexicographic condition  vanish. Hence
			\[ \left|\mathbb E e(T_{A_0}(U))\right| \le C e^{-cL}\]
			for some absolute constants $C,c >0$ depending only on $q$ and $\rho$. Together with \eqref{eq:A-A0} and Lemma~\ref{lem:GCS}, this gives
			$$ \left|\mathbb E e(Q(Z))\right|^{2^q} \leq
				C e^{-cL}+2\pi m. $$
			Changing constants completes the proof.
			\end{proof}
			
			\medskip

	\subsection{A deterministic triangular block construction.}
			
		The next lemma constructs many \(d\)-tuples of digit positions with prescribed sum. Its role is to arrange the top-degree coefficients of $P$ in an almost triangular pattern. This will ultimately allow us to use our decay estimate from Proposition~\ref{lem:triangular}. To increase the readability, we employ two common asymptotic notions in the following. We write for two sequences $a_n \ll_{x} b_n$ if there exists a constant $C(x)$ - which may depend on the parameter (or family of parameters) $x$ - such that $a_n \leq C(x) b_n$ for all $n$. Similarly, we write $a_n \asymp_{x} b_n$, if there exists two constants $c(x), C(x)$ possibly depending on $x$ such that $c(x) b_n \leq a_n \leq C(x) b_n$ for all $n$.
			
		\begin{lemma}\label{lem:blocks}
			Fix integers \(d\ge2 , r\), and a positive real number $\beta>0$. 
			We define the functions
			$s :=\lfloor \beta n\rfloor+r$.
			and  $L:=\lfloor(\log n)^2\rfloor$.
			Given an integer $R$, we define the scale function
			$$H:= H(R) =\left\lfloor \frac{s}{L^R}\right\rfloor.$$
			Provided \(R=R(d)\) is chosen large enough, for
			all sufficiently large \(n\), there exist pairwise disjoint integer intervals
			\[ I_{j,k}\subset\mathbb N, \qquad 1\le j\le d,\quad 1\le k\le L,
			\]
			and an absolute constant $c_d$ (only depending on $d$)  with the following properties. \\	
			First, every \(i\in I_{j,k}\) satisfies
			\begin{equation}\label{eq:indasymp}
				i\asymp_{\beta,d} n.
			\end{equation}
			Second, for each \(k\),
			\begin{equation}\label{eq:sumasymp}
				|\left\{(i_1,\ldots,i_d)\in I_{1,k}\times\cdots\times I_{d,k}:
				i_1+\cdots+i_d=s\right\}|
				\asymp_d H^{d-1}.
			\end{equation}
			Third, suppose that for every \(1\le k\le L\) one chooses a tuple
			$(i_{1,k},\ldots,i_{d,k}) \in I_{1,k}\times\cdots\times I_{d,k}$
			such that
			\begin{equation}\label{eq:tuplesum}
			i_{1,k}+\cdots+i_{d,k}=s.
			\end{equation}
			Then, whenever $(k_1,\ldots,k_{d-1})\succ(k_d,\ldots,k_d)$,
			one has
			\begin{equation}\label{eq:offdiag}
			i_{1,k_1}+\cdots+i_{d,k_d}\ge s+c_d H.
			\end{equation}
		\end{lemma}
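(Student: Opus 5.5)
The plan is to build the intervals explicitly as translates of a single window of length $H$. Their left endpoints will depend on $k$ through a hierarchy of step sizes $W_1\gg W_2\gg\cdots\gg W_{d-1}\gg H$. The last family $I_{d,k}$ moves in the opposite direction, so that the diagonal sum of left endpoints is independent of $k$. Lexicographic excess in $(k_1,\ldots,k_{d-1})$ then forces a positive shift of size at least the smallest step, which will give \eqref{eq:offdiag}.

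Concretely, set $W_{d-1}:=4dH$ and $W_j:=4L\,W_{j+1}$ for $j<d-1$, so that $W_1\asymp_d L^{d-2}H$. Put $T:=d(d+1)/2$ and choose base points $b_j$ with $b_j\approx js/T$. Adjust one of them, say $b_d$, by $O(1)$ so that $\sum_{j}b_j=s-\lfloor dH/2\rfloor$ exactly. Define, for $j\le d-1$,
\[
I_{j,k}:=[\,b_j+kW_j,\; b_j+kW_j+H),\qquad I_{d,k}:=\Bigl[\,b_d-k\sum_{j<d}W_j,\; b_d-k\sum_{j<d}W_j+H\Bigr).
\]
All shifts are at most $L\sum_j W_j\ll_d L^{d-1}H\le s\,L^{d-1-R}$, which is $o(s)$ once $R\ge d$. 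Hence every $i\in I_{j,k}$ equals $js/T+o(s)\asymp_{\beta,d} n$, which is \eqref{eq:indasymp}. Disjointness follows in two parts. For different $j$, the families live near the distinct points $js/T$, which are separated by $\gg s$. For fixed $j$ and different $k$, consecutive intervals are separated because $W_j>H$.

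For \eqref{eq:sumasymp}, the diagonal left endpoints sum to $\sum_j b_j=s-\lfloor dH/2\rfloor$ for every $k$. So the tuple sums range over an interval of length $\approx dH$, and $s$ lies at its centre. The number of $(i_1,\ldots,i_d)\in[0,H)^d$ shifted appropriately with fixed sum at the centre of the range is $\asymp_d H^{d-1}$. This is a standard lattice-point count: fix $i_1,\ldots,i_{d-1}$ in the middle halves of their windows, which leaves $i_d$ determined and inside its window.

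For \eqref{eq:offdiag}, take $(k_1,\ldots,k_{d-1})\succ(k_d,\ldots,k_d)$, and let $j_0$ be the first index with $k_{j_0}\ne k_d$, so $k_{j_0}\ge k_d+1$. Each element is at least its left endpoint, so
\[
i_{1,k_1}+\cdots+i_{d,k_d}\ \ge\ \sum_j b_j+\sum_{j<d}(k_j-k_d)W_j\ \ge\ s-\tfrac{dH}{2}+W_{j_0}-\sum_{j>j_0}L\,W_j .
\]
By the geometric choice $W_j=4LW_{j+1}$, the tail $\sum_{j>j_0}LW_j$ is at most $W_{j_0}/2$. Therefore the right-hand side is at least $s-dH/2+W_{d-1}/2=s+\tfrac{3d}{2}H$, giving \eqref{eq:offdiag} with $c_d=d$. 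Note that the tuples chosen in \eqref{eq:tuplesum} play no role beyond membership in the intervals.

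The main care point is bookkeeping rather than any deep step. One must check that $H\to\infty$, which holds since $s\asymp n$ and $L^R=(\log n)^{O(1)}$, so the floors cause only $O(1)$ errors. One must also confirm that the total spread $L^{d-1}H$ is negligible against $s$, which is exactly where $R=R(d)\ge d$ is needed.
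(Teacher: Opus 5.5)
Your construction is essentially the paper's own. It uses the same anchors $js/T$ (the paper's $\theta_j=2j/(d(d+1))$), the same hierarchy $W_1\gg\cdots\gg W_{d-1}\gg H$ with the $d$-th family shifted backwards by $k\sum_{j<d}W_j$ so the diagonal sum is independent of $k$, and the same first-differing-index dominance argument. The only cosmetic difference is that you get \eqref{eq:offdiag} from left endpoints, so \eqref{eq:tuplesum} is never used, whereas the paper telescopes using the tuple constraint. Two small bookkeeping corrections are needed. For $d\ge4$ the ``middle halves'' window is too wide in the lattice count, and you should restrict the first $d-1$ offsets to within about $H/(2d)$ of their midpoints, as the paper does. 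Also, $b_d$ has to be moved by about $dH/2=o(s)$, not by $O(1)$.
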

			
		\begin{proof}
			We choose fixed positive numbers \(\theta_1,\ldots,\theta_d\) with sum $1$ and pairwise distinct values. For instance, one may take
			\[ \theta_j=\frac{2j}{d(d+1)} \qquad(1\le j\le d).\]
			The numbers $\theta_j$ give rise to corresponding digit positions
			\[ u_j:=\lfloor \theta_js\rfloor \quad(1\le j<d),
			\qquad u_d=s-u_1-\cdots-u_{d-1}.\]
			Note that \(u_j=\theta_js+O_d(1)\) for all $1 \leq j \leq d$ and that the $u_j$'s are separated from one another by a fixed positive proportion of $s$. The idea is that $u_j$ is the leading order of the center of $I_{j,k}$. Of course to guarantee disjointness, one needs to perturb $u_j$ by some $k$-dependent subleading term. These are constructed in the next step.
				
			We choose integer weights \(W_1,\ldots,W_{d-1}\) recursively as follows. Let
			\[ W_{d-1} :=100dH,\]
			and, for \(1\le j<d-1\), we set
			\begin{equation}\label{eq:Wweight}
				W_j := 100dL\bigl(W_{j+1}+\cdots+W_{d-1}+H\bigr).
			\end{equation}
			Then 
			\[ W_j\ll_d H L^{d-1-j}, \qquad LW_1\ll_d H L^{d-1}.\]
			We fix the exponent $R$ with \(R>d+2\) such that 
			\begin{equation}\label{eq:Rcond}
				LW_1=o(s).
			\end{equation}
				
			We are finally ready to define the integer centers of the intervals $I_{j,k}$. For $1\le k\le L$, let
			\[ a_{j,k} :=u_j+kW_j \quad(1\le j<d), \]
			and $a_{d,k}$ is defined to ensure the summability condition,
			\[ a_{d,k} :=s-\sum_{j=1}^{d-1} a_{j,k}.\]
			By construction
			\begin{equation}\label{eq:sumcond}
				a_{1,k}+\cdots+a_{d,k}=s.
			\end{equation}
			Because of \eqref{eq:Rcond}, all centers remain within fixed positive proportions of \(s\). Since the original macroscopic centers \(u_1,\ldots,u_d\) are separated by a fixed positive proportion of $s$,   the intervals
			\[I_{j,k} :=[a_{j,k}-H,a_{j,k}+H]\cap\mathbb Z \]
			are pairwise disjoint for different $j \neq j'$ and all sufficiently large $n$. For $1 \leq j \leq d-1$, we have by construction $W_j > 100 H$, which is good enough to ensure that $ I_{j,k} \cap I_{j,k'} = \emptyset$ for $k \neq k'$. The final case $j = d$, follows by inspection of the summability constraint since $|a_{d,k} - a_{d,k'}| > |k-k'|W_1$. This proves the disjointness and the above argument already reveals that every element of every interval is bounded uniformly from below by $\alpha n$ with some $\alpha > 0$. The upper bound is trivial and, hence, \eqref{eq:indasymp} follows.  
				
			The second claim \eqref{eq:sumasymp} follows from \eqref{eq:sumcond}. Indeed, we write \(i_j=a_{j,k}+x_j\), with \(|x_j|\le H\). The condition \(i_1+\cdots+i_d=s\) becomes
			$x_1+\cdots+x_d=0$. This becomes a simple counting problem
			Since $x_d$ is uniquely determined by the previous elements, there are at most \(O_d(H^{d-1})\) choices in total. Conversely, all choices 
			\[ |x_1|,\ldots,|x_{d-1}|\le \frac{H}{2d} \]
			are admissible because \(|x_d|\le H/2\). Thus \eqref{eq:sumasymp} holds.
				
			It remains to verify the crucial triangular property. We consider tuples $(i_{1,k},\ldots,i_{d,k}) \in I_{1,k}\times\cdots\times I_{d,k}$ satisfying \eqref{eq:tuplesum}. Similarly to the previous step, we write $i_{j,k} =a_{j,k}+x_{j,k}$ 
			with $|x_{j,k}|\leq H$.
			We fix a $d$-element $(k_1,\ldots,k_d)$ with
			$(k_1,\ldots,k_{d-1})\succ(k_d,\ldots,k_d).$
			Let \(j_0\) be the first index \(1\le j_0\le d-1\) for which \(k_{j_0}>k_d\). The summability condition yields
			$i_{d,k_d}=s-\sum_{j=1}^{d-1}i_{j,k_d}.$
			Therefore
			\begin{align}\label{eq:telescop}
				i_{1,k_1}+\cdots+i_{d,k_d}-s &=
				\sum_{j=1}^{d-1} \bigl(i_{j,k_j}-i_{j,k_d}\bigr).	
			\end{align}
			For \(j<j_0\), one has \(k_j=k_d\), so the corresponding summands in \eqref{eq:telescop} vanish. For \(j=j_0\),
			\[ i_{j_0,k_{j_0}}-i_{j_0,k_d} \geq W_{j_0}-2H.\]
			For \(j>j_0\),
			\[ \left|i_{j,k_j}-i_{j,k_d}\right| \leq L W_j+2H. \]
			By the recursive construction \eqref{eq:Wweight}, the positive contribution \(W_{j_0}\) dominates all possible later negative contributions. More precisely,
			\[ W_{j_0}-2H
				- \sum_{j=j_0+1}^{d-1}(LW_j+2H) \geq c_d^{j_0} H\]
			with some absolute constant $c_d^{j_0} > 0$ for large enough $n$.
			Thus \eqref{eq:offdiag} holds with $c_d := \min_{1 \leq j_0 < d} c_d^{j_0}$.
			\end{proof}
			
			\medskip
			
		\subsection{ Extraction of sparse fair Bernoulli variables}
			
		In this little section, we explain how we can transfer results on fair Bernoulli such as Lemma~\ref{lem:triangular} to our highly biased setting of interest. We abbreviate $\alpha :=\frac{d-1}{d}$.
		Let $\Gamma > 0$ be the logarithmic exponent from Theorem~\ref{thm:main} (which we still need to fix in the upcoming proof). We define the sequence
		\begin{equation}\label{eq:lambda}
		\lambda_j := \begin{cases} \frac12(\log j)^\Gamma j^{-\alpha} & \text{ if } j > j_0, \\
			0 & \text{ else,}
		\end{cases}
		\end{equation}
		with a cutoff $j_0$ which is fixed in a moment. 
		Since $\min(p_j,1-p_j)\ge p_j(1-p_j)=v_j$, the 
		condition $v_j\ge (\log j)^{\Gamma} j^{-\alpha}$  implies sufficiently large \(j\),
		\[\lambda_j\le \min(p_j,1-p_j).\]
		The cutoff $j_0$ is chosen such that the above estimate holds for all $j > j_0$. This elementary observation allows us to realize the biased Bernoullis  \(\xi_j\) as follows. Let \(\chi_j\) be Bernoulli with parameter \(\lambda_j\). If \(\chi_j=1\), let \(\eta_j\) be a fair Bernoulli variable. If \(\chi_j=0\), let \(\zeta_j\) be Bernoulli with parameter
		\[ q_j=\frac{p_j-\lambda_j/2}{1-\lambda_j}.\]
		The inequality \(\lambda_j\le\min(p_j,1-p_j)\) guarantees \(q_j\in[0,1]\). We take all auxiliary variables independent, and define
		\begin{equation}\label{eq:decomp}
			\xi_j=\chi_j\eta_j+(1-\chi_j)\zeta_j.
		\end{equation}
		Then
		\[\mathbb P(\xi_j=1) = \lambda_j\cdot\frac12+(1-\lambda_j)q_j =
		p_j.\]
		Thus this construction has exactly the original distribution. The variables with \(\chi_j=1\) lead to the sparse fair bits $\eta_j$. Our bounds will be applied to exactly these random collection of indices endowed with fair Bernoulli's. We use this notation throughout the next section.

		\subsection{The Fourier decay estimate}
			
		After all these preparations, we are finally ready to prove the key Fourier estimate. This is arguably the technically most challenging proof of this work. It requires a combination of rather distinct considerations. 
			
		\begin{prop}\label{prop:correlation}
			Let \(M>0\). If the exponent $\Gamma$ in \eqref{eq:sharp} is sufficiently large in terms of degree $d$ and $M$, then for every  base $b\ge2$ and every nonzero integer \(h\in\mathbb Z\setminus\{0\}\),
			\begin{equation}\label{eq:fourier}
				\left| \mathbb E e\!\left(h(b^n-b^m)P(X)\right)
				\right| \ll_{P,b,h,M} (\log n)^{-M}
			\end{equation}
			uniformly for $0\le m<n$.
			\end{prop}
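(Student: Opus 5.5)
Set $t:=h(b^n-b^m)$ and $\Lambda_P:=d!\,a_d$. We pick the dyadic scale $s$ with $\|t\Lambda_P2^{-s}\|\ge 1/4$ via Lemma~\ref{lem:scale}, i.e. $s=\lfloor\log_2|t\Lambda_P|\rfloor+2$. Since $b^n-b^m\asymp_b b^n$, this has the form $s=\lfloor\beta n\rfloor+r$ with $\beta=\log_2 b$ and $r$ ranging over a bounded set depending on $P,b,h$. This is exactly the setting of Lemma~\ref{lem:blocks}, which we apply with $L=\lfloor(\log n)^2\rfloor$ and $H=\lfloor s/L^R\rfloor$ to obtain the disjoint intervals $I_{j,k}$.

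\emph{Randomness extraction.} Use the decomposition \eqref{eq:decomp} and condition on all $\chi_j$, on all $\zeta_j$, and on all $\eta_j$ with $j\notin\bigcup I_{j,k}$. For each $(j,k)$, the number of indices $i\in I_{j,k}$ with $\chi_i=1$ is Binomial with mean $\asymp H\lambda_n\asymp H(\log n)^\Gamma n^{-\alpha}$. We then look for a single index tuple per block $k$: an element $(i_{1,k},\dots,i_{d,k})\in I_{1,k}\times\cdots\times I_{d,k}$ with sum $s$ and all $\chi_{i_{j,k}}=1$. By \eqref{eq:sumasymp} there are $\asymp H^{d-1}$ candidate tuples, so the expected number of good tuples is
\[
\asymp H^{d-1}\lambda_n^d\asymp n^{d-1}L^{-R(d-1)}(\log n)^{d\Gamma}n^{-(d-1)}=(\log n)^{d\Gamma-2R(d-1)}.
\]
This is a large power of $\log n$ once $\Gamma$ is large relative to $R(d)$. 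A second-moment or Janson-type bound (the tuples share coordinates, but only polynomially) should show that each block contains a good tuple with probability $1-O((\log n)^{-M-1})$. Taking a union bound over the $L$ blocks, all blocks succeed outside an event of probability $\ll(\log n)^{-M}$; there we bound the Fourier coefficient trivially by $1$.

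\emph{Fourier estimate on the good event.} Now condition additionally on every $\eta_i$ except $Z_{j,k}:=\eta_{i_{j,k}}$. Then $tP(X)$ becomes a multilinear polynomial $Q$ of degree $\le d$ in the $dL$ fair bits $Z_{j,k}$ (set $Z_{j,k}\equiv 0$ contribution-wise if $d>q$; we take $q=d$). The coefficient of $Z_{1,k_1}\cdots Z_{d,k_d}$ has the form
\[
A(k_1,\dots,k_d)=t\,d!\,a_d\,2^{-(i_{1,k_1}+\cdots+i_{d,k_d})}+O\bigl(|t|2^{-(d-1)\cdot\min}\cdots\bigr).
\]
Lower-degree terms of $P$ contribute only $|t|2^{-\Theta(n)}$-small quantities, because every index is $\asymp n$ and the lower-degree monomials of $P$ cannot contain all $d$ variables. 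The diagonal coefficient $A(k,\dots,k)=t\Lambda_P2^{-s}+o(1)$ then satisfies \eqref{eq:diag} with $\rho=1/8$. By \eqref{eq:offdiag}, every future coefficient is $\ll |t|2^{-s-c_dH}\ll 2^{-c_dH}$. Summing over at most $L^d$ of them gives a future mass $m\ll L^d2^{-c_dH}$, which is super-polynomially small. Lemma~\ref{lem:finite-bias} then yields $|\mathbb E e(Q)|\ll(e^{-cL}+m)^{2^{-d}}\ll e^{-c(\log n)^2}$. Averaging over the conditioning completes the bound \eqref{eq:fourier}.

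The main obstacle is the extraction step. We need a uniform lower tail bound for the existence of good tuples in every block simultaneously, at the precise density $\lambda_n^d H^{d-1}$. The tuples overlap heavily, so a Janson or second-moment computation is required, and its quality dictates how large $\Gamma$ must be relative to $R(d)$ and $M$. I would first try Janson's inequality, using $\Delta\ll\mu^2\max(\mu_{\text{partial}}^{-1})$: sharing $\ell$ coordinates costs a factor $(H\lambda_n)^{-\ell}$, and $H\lambda_n\to\infty$ polynomially, so the dependency contribution stays controlled. Choosing $\Gamma\ge 2R+M+2$ then makes the failure probability $e^{-\mu/2}$ negligible.
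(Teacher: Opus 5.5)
Your proposal is essentially the paper's proof: the same blocks from Lemma~\ref{lem:blocks}, the same extraction of fair bits via the selectors $\chi_j$ with a second-moment count of active tuples in each block, and the same application of Lemma~\ref{lem:finite-bias} with future mass $\ll L^d2^{-c_dH}$. Your Janson bound is a harmless strengthening of the paper's Chebyshev bound, and letting $r$ range over a bounded set via Lemma~\ref{lem:scale} works as well as the paper's fixed $r$ with $\frac1{16}\le|hD|2^{-r}<\frac18$.

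Two small fixes are needed. First, $A(k_1,\dots,k_d)=t\,d!\,a_d\,2^{-(i_{1,k_1}+\cdots+i_{d,k_d})}$ holds exactly, because the selected positions are distinct and lower-degree terms of $P$ cannot produce a monomial in $d$ distinct variables; so drop your error term, since bounding such contributions by size would fail as $|t|\asymp 2^s$. Second, the union bound over $L=\lfloor(\log n)^2\rfloor$ blocks needs per-block failure $O((\log n)^{-M-2})$ rather than $O((\log n)^{-M-1})$; your Janson bound (or $\kappa>M+4$ in the Chebyshev route) supplies this anyway.
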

		It is important to note that this estimate does only depend on the larger exponent $n$. 
		\begin{proof}
			We fix throughout the proof the base $b\geq2$ and the probing integer $h\neq 0$. Let us first introduce all needed parameters. We abbreviate $D :=d!a_d\neq 0$ and set the $b$-adic scale $\beta :=\log_2 b$.
			We choose an integer $r=r(P,h)$ such that
			\begin{equation}\label{eq:r}
			\frac1{16}\le |hD|2^{-r}<\frac18.
			\end{equation}
			As in Lemma~\ref{lem:triangular}, we set $s:=\lfloor\beta n\rfloor+r$.	
			It is also convenient to write as in previous proofs $t := h(b^n-b^m)$, but one should keep in mind that $t$ depends on $n$ and $m$. 
			As a first little step, we want to rewrite the bounds \eqref{eq:r} in terms of $s$ and $t$.  We have
			\[ tD2^{-s} = hD(b^n-b^m)2^{-\lfloor\beta n\rfloor-r}       
			 = hD2^{-r} 2^{\beta n-\lfloor\beta n\rfloor}(1-b^{m-n}).\]
			Now
			\[1\le 2^{\beta n-\lfloor\beta n\rfloor}<2,
				\qquad \frac12\le 1-b^{m-n}<1.
			\]
			Therefore \eqref{eq:r} implies
			\begin{equation}\label{eq:nonresonant}
				\frac1{32}\le |tD2^{-s}|<\frac14.
			\end{equation}
			We can rewrite this as 
			\begin{equation}\label{eq:rho}
				\distZ{tD2^{-s}}\ge \rho, \qquad \rho :=\frac1{32},
			\end{equation}
			where the notation suggest that this estimate will correspond to the assumption from Proposition~\ref{lem:triangular} and Lemma~\ref{lem:finite-bias}.
				
			We want to apply Lemma \ref{lem:blocks} with the parameters $s, \beta, r$ from above. We fix $R = R(d)$ as in Lemma \ref{lem:blocks} and we accordingly set $L=\lfloor(\log n)^2\rfloor$ and
			\begin{equation}\label{eq:H} 
			H=\left\lfloor\frac{s}{L^R}\right\rfloor
			\asymp_{b} \frac{n}{(\log n)^{2R}}.
			\end{equation}
			We obtain pairwise disjoint intervals \(I_{j,k}\) with the crucial separation property \eqref{eq:offdiag}
			For \(1\le k\le L\), let \(\mathcal T_k\) be the level-sets from Lemma~\ref{lem:blocks}, i.e.
			\[ \mathcal T_k := \left\{
				(i_1,\ldots,i_d)\in I_{1,k}\times\cdots\times I_{d,k}:
				i_1+\cdots+i_d=s \right\}. \]
			Then Lemma \ref{lem:blocks} also yields the asymptotics
			\begin{equation}\label{eq:Tkasymp}
				|\mathcal T_k|\asymp_d H^{d-1}.
			\end{equation}
				
			Let \(Y_k\) count the selected tuples in \(\mathcal T_k\):
			\begin{equation}\label{eq:Yk}
				Y_k := \sum_{(i_1,\ldots,i_d)\in\mathcal T_k} \chi_{i_1}\cdots\chi_{i_d}.
			\end{equation}
			We stress that we work with the selector Bernoulli variables $\chi_j$ not with the actual Bernoullis $\xi_j$. All indices appearing in the intervals \(I_{j,k}\) are \(\asymp_{b,d}n\). Hence, recalling the cut decay sequence $\lambda_j$ from \eqref{eq:lambda},
			\begin{equation}\label{eq:lamasymp}
				\lambda_i\asymp_{b,d,\Gamma}(\log n)^\Gamma n^{-\alpha}
				\qquad(i\in I_{j,k}).
			\end{equation}
			This allows us to estimate the expectation of $Y_k$. Indeed using \eqref{eq:Tkasymp}, we get
			\begin{align}
				\mu_k:=\mathbb E Y_k
					\asymp_{b,d,\Gamma} H^{d-1}\bigl((\log n)^\Gamma n^{-\alpha}\bigr)^d        
					\asymp_{b,d,\Gamma}
					\left(\frac{n}{(\log n)^{2R}}\right)^{d-1}
					(\log n)^{\Gamma d}n^{-(d-1)}                           
					= (\log n)^{\Gamma d-2R(d-1)}.
			\end{align}
			This calculation reveals how to choose the exponent $\Gamma$: we fix $\Gamma$ large enough such that	
			\begin{equation}\label{eq:Gamma}
			\kappa := \Gamma d-2R(d-1) >M+4.
			\end{equation}
			In particular, we then have uniformly $\mu_k \to \infty$ as $n \to \infty$. We have now all parameters at hand and this finishes the preparatory part of the proof and we turn to the main estimates.
				
			In this direction, want to control  the probability $ \mathbb P(Y_k=0)$. We argue via a second-moment analysis. Let us write
			$I_\tau :=\prod_{j=1}^{d}\chi_{i_j}$ for $\tau=(i_1,\ldots,i_d)\in\mathcal T_k$
			Then $Y_k=\sum_{\tau\in\mathcal T_k}I_\tau$ and 
			\[ \operatorname{Var}(Y_k)
			= \sum_{\tau,\tau'\in T_k} \operatorname{Cov}(I_\tau,I_{\tau'}).
			\]
			We separate the diagonal terms, the disjoint pairs, and the overlapping pairs.
			For the diagonal contribution we simply use
			\[ \sum_{\tau\in T_k}\operatorname{Var}(I_\tau)
			\leq \sum_{\tau\in T_k}\mathbb E I_\tau = \mu_k.
			\]
			If two distinct tuples \(\tau,\tau'\in T_k\) have no coordinate in common, then \(I_\tau\) and \(I_{\tau'}\) are independent, and hence their covariance vanishes.  If they have exactly \(u\) common coordinates, \(1\le u\le d-1\), then the number of ordered pairs \((\tau,\tau')\) of this kind is $O_d(H^{2d-2-u})$.
			Therefore by \eqref{eq:lamasymp},
			\[ \mathbb E(I_\tau I_{\tau'}) \ll_{b,d,\Gamma}
				\bigl((\log n)^\Gamma n^{-\alpha}\bigr)^{2d-u}.
			\]
			Since $ \operatorname{Cov}(I_\tau,I_{\tau'})
			\leq \mathbb E(I_\tau I_{\tau'})$,
			we obtain in total
			\begin{equation}\label{eq:varY}
				\operatorname{Var}(Y_k) \leq
				\mu_k + C_{b,d,\Gamma} H^{2d-2}((\log n)^\Gamma n^{-\alpha}\bigr)^{2d} \sum_{u=1}^{d-1}
				(H(\log n)^\Gamma n^{-\alpha}\bigr)^{-u}.
			\end{equation}
			Using again \(\mu_k\asymp H^{d-1}((\log n)^\Gamma n^{-\alpha}\bigr)^{d}\), this becomes
			\begin{equation}\label{eq:varY2}
			\operatorname{Var}(Y_k)
			\leq \mu_k + C_{b,d,\Gamma}\mu_k^2
			\sum_{u=1}^{d-1}(H(\log n)^\Gamma n^{-\alpha})^{-u}.
			\end{equation}
			For all sufficiently large $n$, we have $\sum_{u=1}^{d-1}(H(\log n)^\Gamma n^{-\alpha})^{-u} \ll_{b,d,\Gamma}
			n^{-1/(2d)}$. Collecting the previous estimates, we obtain
			\begin{equation}\label{eq:varY3}
				\frac{\operatorname{Var}(Y_k)}{\mu_k^2} \ll_{b,d,\Gamma}
				(\log n)^{-\kappa}
				+n^{-1/(2d)}.
			\end{equation}
			Chebyshev's inequality gives
			\begin{equation}
			\mathbb P(Y_k=0) \leq  \mathbb P(|Y_k-\mu_k|\ge \mu_k)
			\leq \frac{\operatorname{Var}(Y_k)}{\mu_k^2}
			\ll_{b,d,\Gamma} (\log n)^{-\kappa} + n^{-1/(2d)}
			\end{equation}
			for all sufficiently large \(n\). Hence, by the union bound,
			\begin{equation}\label{eq:union}
				\mathbb P(Y_k>0\text{ for every }1\le k\le L)
				\ge
				1-
				O_{b,d,\Gamma}\!\left((\log n)^{2-\kappa}+L n^{-1/(2d)}\right).
			\end{equation}
			Because of \eqref{eq:Gamma}, the exceptional probability in \eqref{eq:union} is
			\begin{equation}\label{eq:good}
				O_{b,d,\Gamma,M}\bigl((\log n)^{-M}\bigr).
			\end{equation}
				
			We work in the following on the good event \(\Omega_{\mathrm{good}}\) on which \(Y_k>0\) for every $1\leq k\leq L$. On \(\Omega_{\mathrm{good}}\), choose, for each \(k\), one active tuple
			\begin{equation}\label{eq:chosen}
				(i_{1,k},\ldots,i_{d,k})\in\mathcal T_k
			\end{equation} 
			by a deterministic rule depending only on the selector variables \((\chi_j)\). For instance, one may choose the lexicographically first active tuple. By construction
			\begin{equation}\label{eq:tupleadd}
				i_{1,k}+\cdots+i_{d,k}=s
			\end{equation}
			and
			\[ \chi_{i_{1,k}}=\cdots=\chi_{i_{d,k}}=1.\]
				
			The idea is that on $\Omega_{\mathrm{good}}$ we have many active sites and thus the fair Bernoulli variables $\eta_j$ come into play. As we have already seen in Proposition~\ref{lem:triangular}, if fair Bernoullis follow a triangular structure, we can obtain the desired cancellations.  Let $\mathcal{A}$ be the $\sigma$-algebra generated by the following data:	
				\[
				\begin{gathered}
					\text{all selector variables }(\chi_j),\\
					\text{all background variables }(\zeta_j),\\
					\text{all fair variables }(\eta_j)
					\text{ except those equal to }
					\eta_{i_{j,k}}.
				\end{gathered}
				\]
			In the following we condition on $\mathcal{A}$. Under this conditioning, the variables
			\[ Z_{j,k}:=\eta_{i_{j,k}}, \qquad 1\le j\le d,\quad 1\le k\le L,
			\]
			remain independent fair Bernoulli variables.
				
			For the conditioned value of $X$, we may write
			\begin{equation}\label{eq:Xcond}
				X=X_0+\sum_{j=1}^{d}\sum_{k=1}^{L}2^{-i_{j,k}}Z_{j,k},
			\end{equation}
			where $X_0$ is $\mathcal A$-measurable. Here it is important that we have chosen the tuples in \eqref{eq:chosen} in a measurable way. We can treat $X_0$ as constant in the following. The expression $tP(X)$ 
			becomes a real polynomial $Q$ of degree at most $d$ in the $dL$ Bernoulli variables $Z_{j,k}$. We pass to its unique multilinear representation and want to compute its coefficient of
			$Z_{1,k_1}Z_{2,k_2}\cdots Z_{d,k_d}$
			Recall that the intervals \(I_{j,k}\) from Lemma~\ref{lem:blocks} are pairwise disjoint. Thus, all selected digit positions \(i_{j,k}\) are distinct. Therefore only the leading term $a_dX^d$ of $P$ can contribute to this coefficient. A direct computation shows
			\begin{equation}\label{eq:multicoff} 
				A(k_1,\ldots,k_d) = t\,d!a_d\,2^{-(i_{1,k_1}+\cdots+i_{d,k_d})}
				= tD\,2^{-(i_{1,k_1}+\cdots+i_{d,k_d})}.
			\end{equation}
			This justifies a posteriori the definition of $D$. 
		    The diagonal coefficients are readily determined. Indeed \eqref{eq:tupleadd} gives
			\[ A(k,k,\ldots,k) =tD2^{-s}.\]
			Now the use of \eqref{eq:rho} becomes clear as it implies the diagonal estimate,
			\begin{equation}\label{eq:diagest}
				\distZ{A(k,k,\ldots,k)}\ge\rho=\frac1{32} \qquad(1\le k\le L).
			\end{equation}
			Of course, $A$ is not triangular and we cannot apply Proposition~\ref{lem:triangular} directly. Instead, we need to refer to Lemma~\ref{lem:finite-bias} and this requires a bound on the future mass. 	
			 If $(k_1,\ldots,k_{d-1})\succ(k_d,\ldots,k_d)$,
			then Lemma \ref{lem:blocks}, applied to the chosen tuples \eqref{eq:chosen}, gives
			\[	i_{1,k_1}+\cdots+i_{d,k_d}\ge s+c_d H.\]
			Combining this with \eqref{eq:multicoff} and \eqref{eq:nonresonant},
			\[ |A(k_1,\ldots,k_d)| \leq
				|tD|2^{-s}2^{-c_d H}\leq \frac14\,2^{-c_d H}.\]
			Therefore the future mass $m$ from Lemma~\ref{lem:finite-bias}  satisfies
			\begin{equation}\label{eq:fut}
				m \leq L^d2^{-c_dH}
				\ll (\log n)^{-M2^d}
			\end{equation}
			for all sufficiently large \(n\). The first bound follows from the fact that there at most $L^d$ choices for the $k$-tuples. And the second estimate is trivial since $H$ grows polynomially up to a logarithmic suppression. On  \(\Omega_{\mathrm{good}}\), we have
			\begin{equation}
				\left| \mathbb E\left(e(tP(X))
					\,\middle|\,\mathcal A \right) \right|
					\leq C_d\bigl(e^{-c_dL}+m\bigr)^{2^{-d}}
					\ll_{P,b,h,M} (\log n)^{-M}.
			\end{equation}
			We used that $L$ grows logarithmically and hence $e^{-c_dL}$ decays faster than the logarithm.	
			Finally,
			\[ \left|\mathbb E e(tP(X))\right|
					\leq \mathbb P(\Omega_{\mathrm{good}}^c)
					+ \mathbb E\left[
					\mathbf 1_{\Omega_{\mathrm{good}}}
				\left|\mathbb E\left(e(tP(X))\,\middle|\,\mathcal A\right) \right| \right]                                              
			\ll_{P,b,h,M} (\log n)^{-M},
				\]
			 This completes the proof.
			\end{proof}
			
			\medskip
			
			\subsection{Completing the proof}
			
			The rest of the proof of Theorem~\ref{thm:main} is now a rather routine application of Weyl's criterion.
			
			\begin{proof}[Proof of Theorem~\ref{thm:main}]
				Choose a  number $M>1$, for example \(M=2\). Choose $\Gamma_0$ large enough such that  Proposition\ref{prop:correlation} applies with this value of $M$. Note that the value $\Gamma_0$ depends only on the degree of the polynomial $P$. 
				We fix an integer base \(b\ge2\), a non-zero integer $h$ and  define the random variables
				$$ Y_n=e\!\left(hb^nP(X)\right),
				\qquad S_N=\sum_{n=0}^{N-1}Y_n. $$
				Of course \(|Y_n|=1\), and
				\[
				\begin{aligned}
					\mathbb E|S_N|^2
					&=
					\sum_{m,n=0}^{N-1}
					\mathbb E e\!\left(h(b^n-b^m)P(X)\right)            \\
					&=
					N
					+
					2\operatorname{Re}
					\sum_{0\le m<n<N}
					\mathbb E e\!\left(h(b^n-b^m)P(X)\right).
				\end{aligned}
				\]
				The second term is estimated via Proposition \ref{prop:correlation}. We obtain 
				$$ \mathbb E|S_N|^2 \ll_{P,b,h}
				N+ \sum_{n=2}^{N-1}\frac{n}{(\log n)^M}. $$
				To estimate the partial sums, one may proceed by a dyadic decomposition or an integral comparison combined with repeated partial integration to obtain
				$$ \sum_{n=2}^{N-1}\frac{n}{(\log n)^M}
				\ll \frac{N^2}{(\log N)^M}. $$
				Since \(M>1\), we get 
				\[ \sum_{N=2}^{\infty} \frac1{N^3}\mathbb E|S_N|^2
					\ll_{P,b,h} \sum_{N=2}^{\infty}\frac1{N^2}
					+ \sum_{N=3}^{\infty}\frac1{N(\log N)^M} <\infty.
				\]
				By Lemma \ref{lem:DEL},
				\[ \frac1N\sum_{n=0}^{N-1} e\!\left(hb^nP(X)\right)
				\longrightarrow0 \qquad\text{almost surely.}
				\]
				This holds for fixed $b,h$. Taking the countable intersection over all $b \geq 2$ and \(h\in\mathbb Z\setminus\{0\}\), Weyl's criterion gives that $P(X)$ is normal to all integer bases $b$ and, thus, absolutely normal.
			\end{proof}

		\section{Proof of Theorem~\ref{thm:necessary}}\label{sec:proof3}
		
		We fix throughout this section a constant $C > 0$ and assume always $p_n\sim C n^{-(d-1)/d}$.  Before starting with the actual proof, let us sketch the main idea to derive Theorem~\ref{thm:necessary}. In this sparse regime, it seems natural that powers $X^k$ for $1 \leq k < d$ do not have enough carry mixing. Hence, the normality of an integer-valued polynomial $Q(X)$ should only depend on the behavior of the monomial $X^d$. Typically, under our decay assumption there is only a finite interaction. This is sparse enough to expect a Poissonian limit and the correct way to think about $X^d$ is that
		$$ X^d \simeq \sum_{n = 1}^\infty Z_n 2^{-n}$$
		where the random variables $Z_n$ are independent Poisson with some parameter $\lambda > 0$ at least for large enough $n$. This is not a binary expansion since $Z_k$ may take values larger than $1$ and it is not completely obvious if such a number can still be normal. However, if we consider the model case that the $Z_k$ are perfectly i.i.d. Poisson random variables, the Fourier transform $e(2^N \sum_{n = 1}^\infty Z_n 2^{-n})$ is readily computed and shown to be non-zero. By Weyl's criterion one obtains that $X^d$ is at least not almost surely normal.

		Making this heuristics precise, requires some effort. Our first lemma records the asymptotics of $k$-fold interactions and will be used for many upcoming estimates.
		
		\begin{lemma}\label{lem:asympdfold}
			For $1\leq k\leq d$, we define
			\begin{equation}
			U_k(N) :=\sum_{\substack{i_1,\ldots,i_k\ge 1\\ i_1+\cdots+i_k=N}}
			p_{i_1}\cdots p_{i_k}.
			\end{equation}
			Then
			\begin{equation}
			U_k(N) \sim C^k \frac{\Gamma(1/d)^k}{\Gamma(k/d)} N^{k/d-1},
			\end{equation}
			where $\Gamma(x)$ is the usual Gamma-function. In particular, $U_d(N)\to C^d\Gamma(1/d)^d$  while
			$U_k(N)\to0$  for every \(k<d\). \\
			Moreover, if \(1\le r<d\) and \(m_1,\ldots,m_r\) are fixed positive
			integers, then
			\begin{equation}
			V_{\mathbf m}(N)
			:=
			\sum_{\substack{i_1,\ldots,i_r\ge 1\\
					m_1i_1+\cdots+m_ri_r=N}}
			p_{i_1}\cdots p_{i_r}
			=
			O\bigl(N^{r/d-1}\bigr)
			=
			o(1).
			\end{equation}
			All estimates are uniform under bounded shifts of \(N\).
		\end{lemma}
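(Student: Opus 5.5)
We prove the asymptotics for $U_k(N)$ by induction on $k$, treating the sum as a discrete convolution and comparing it with the Beta integral. Write $p_i = C i^{-\alpha}(1+\varepsilon_i)$ with $\alpha=(d-1)/d$ and $\varepsilon_i\to0$.

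For $k=1$ the claim is just the hypothesis: $U_1(N)=p_N\sim C N^{-\alpha} = C\,\Gamma(1/d)/\Gamma(1/d)\, N^{1/d-1}$.

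For the induction step we use the recursion
\[
U_{k+1}(N)=\sum_{i=1}^{N-1} p_i\, U_k(N-i).
\]
We insert the inductive asymptotics $U_k(M)\sim c_k M^{k/d-1}$, with $c_k=C^k\Gamma(1/d)^k/\Gamma(k/d)$, for large $M$. Then we split the range of $i$ into three pieces:
\begin{itemize}
\item the bulk $\varepsilon N\le i\le(1-\varepsilon)N$;
\item the left end $i<\varepsilon N$;
\item the right end $i>(1-\varepsilon)N$.
\end{itemize}

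On the bulk, both factors are in their asymptotic regime uniformly. The sum is then a Riemann sum: $N^{-1}\sum f(i/N)$ with $f(x)=C c_k x^{-\alpha}(1-x)^{k/d-1}$. It converges to
\[
C c_k N^{(k+1)/d-1}\int_\varepsilon^{1-\varepsilon}x^{1/d-1}(1-x)^{k/d-1}\,dx.
\]
This uses $-\alpha+k/d-1+1=(k+1)/d-1$.

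On the two ends we need a crude uniform bound. Induction also gives $U_k(M)\le C' M^{k/d-1}$ for all $M\ge1$, and we have $p_i\le C' i^{-\alpha}$. Comparing with the integral, each end contributes at most
\[
C''N^{(k+1)/d-1}\int_{[0,\varepsilon]\cup[1-\varepsilon,1]}x^{1/d-1}(1-x)^{k/d-1}\,dx.
\]
The exponents $1/d-1$ and $k/d-1$ are both greater than $-1$, so this integral is $o_\varepsilon(1)$ as $\varepsilon\to0$.

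The main obstacle is this singular endpoint behaviour. Near $i$ small, or $N-i$ small, the summand is not close to $f(i/N)$, and the non-uniformity of $\varepsilon_i$ also matters there. The integrable-singularity bound handles both: it shows these regions carry a vanishing fraction of the total mass. Letting $\varepsilon\to0$ and using
\[
B(1/d,k/d)=\frac{\Gamma(1/d)\Gamma(k/d)}{\Gamma((k+1)/d)}
\]
yields $c_{k+1}=C c_k B(1/d,k/d)$, which is the claimed constant. At $k=d$ we have $\Gamma(1)=1$, giving the stated limit $U_d(N)\to C^d\Gamma(1/d)^d$. For $k<d$ the exponent $k/d-1$ is negative, so $U_k(N)\to0$.

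Uniformity under bounded shifts $N\mapsto N+a$, $|a|\le A$, is immediate, since $(N+a)^{k/d-1}\sim N^{k/d-1}$ uniformly in $|a|\le A$.

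For $V_{\mathbf m}(N)$ we only need the upper bound, proved by induction on $r$. Bound $p_i\le C' i^{-\alpha}$ and drop all the $\varepsilon_i$. The case $r=1$ is trivial: the sum is $p_{N/m_1}$ or $0$, which is $O(N^{-\alpha})$.

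For the step, fix the last variable $i_r$ and write $N'=N-m_r i_r$. The remaining $r-1$ variables contribute $O((N')^{(r-1)/d-1})$ whenever $N'\ge1$. Summing over $i_r\le N/m_r$ gives
\[
\sum_{i_r} i_r^{-\alpha}(N-m_r i_r)^{(r-1)/d-1}.
\]
The same Beta-integral comparison bounds this by $O(N^{r/d-1})$; the factor $m_r$ only rescales the integral. Since $r<d$, the exponent $r/d-1$ is negative, so $V_{\mathbf m}(N)=o(1)$. Shift uniformity again follows from the explicit power bound.
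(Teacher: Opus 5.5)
Your proof is correct. It uses the same mechanism as the paper, comparing the constrained sum with a Beta/Dirichlet integral, but organizes it differently.

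The paper inserts $p_i=Ci^{-\alpha}(1+o(1))$ into the full $k$-fold sum. It then passes in one step to the Dirichlet integral $\int_{x_1+\cdots+x_k=1}\prod_j x_j^{-\alpha}\,d\sigma=\Gamma(1/d)^k/\Gamma(k/d)$ over the simplex. You instead induct on $k$ via the convolution $U_{k+1}(N)=\sum_i p_iU_k(N-i)$. Only one-dimensional Riemann sums appear, and the constant comes from the recursion $c_{k+1}=Cc_kB(1/d,k/d)$.

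Your route gives rigor at exactly the points the paper's one-line Riemann-sum step skips. First, the $o(1)$ in $p_i=Ci^{-\alpha}(1+o(1))$ is not uniform when some index is small. Second, the integrand is singular on the boundary of the simplex. Your bulk/end splitting with the crude power bounds handles both explicitly, and the ends contribute $O(\varepsilon^{1/d})$ and $O(\varepsilon^{k/d})$ relative to $N^{(k+1)/d-1}$. The cost is only a slightly longer argument.

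For $V_{\mathbf m}$ you prove only the upper bound $O(N^{r/d-1})$, which is all the statement asserts. This is actually preferable. The paper's intermediate claim $V_{\mathbf m}(N)\sim A(r)N^{r/d-1}$ fails when $\gcd(m_1,\dots,m_r)>1$; for instance, $r=1$, $m_1=2$ and $N$ odd give $V_{\mathbf m}(N)=0$.
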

		
		\begin{proof}
			Let us abbreviate $ \alpha := (d-1)/d<1$ such that  \(p_n\sim Cn^{-\alpha}\). A simple 
			Riemann-sum estimate gives
			\[
			\begin{aligned}
				U_k(N)
				&=
				C^k
				\sum_{\substack{i_1,\ldots,i_k\ge1\\ i_1+\cdots+i_k=N}}
				i_1^{-\alpha}\cdots i_k^{-\alpha}
				(1+o(1))                                                  \\
				&\sim
				C^k N^{k-1-k\alpha}
				\int_{\substack{x_j>0\\ x_1+\cdots+x_k=1}}
				x_1^{-\alpha}\cdots x_k^{-\alpha}\,d\sigma(x),
			\end{aligned}
			\]
			and note that $k-1-k\alpha = \frac{k}{d} - 1$. The integral is finite because \(\alpha<1\) and gives rise to the Beta function which may be written as 
			\[
			\int_{\substack{x_j>0\\ x_1+\cdots+x_k=1}}
			x_1^{-\alpha}\cdots x_k^{-\alpha}\,d\sigma(x)
			= \frac{\Gamma(1-\alpha)^k}{\Gamma(k(1-\alpha))}
			= \frac{\Gamma(1/d)^k}{\Gamma(k/d)}. \]
			This completes the proof for the asymptotics of $U_k(N)$ and the assertion on the limits $N \to \infty$ are immediate.
			
			A similar argument applies to the affine hyperplane
			$m_1x_1+\cdots+m_rx_r=1$.
			Indeed,
			\[
			\begin{aligned}
				V_{\mathbf m}(N)
				\sim C^r
				\sum_{\substack{i_1,\ldots,i_r\ge1\\
						m_1i_1+\cdots+m_ri_r=N}}
				i_1^{-\alpha}\cdots i_r^{-\alpha} (1+o(1))                        
				&\sim A(r)
				N^{r-1-r\alpha}
				= A(r)
				N^{r/d-1}
			\end{aligned}
			\]
			for some explicit constant $A(r)$. Since \(r<d\), this tends to \(0\).  The claim on the bounded shifts is a direct consequence of the proof.
		\end{proof}
		
		For \(N\ge1\), let \(\mathcal E_N\) be the set of \(d\)-element subsets
		\(E\subset\mathbb N\) satisfying $\sum_{i\in E} i=N.$
		For such a set put the $d$-fold product
		\[
		w(E)=\prod_{i\in E}p_i
		\]
		and we further define the random diagonal count
		\[
		A_N=\sum_{E\in\mathcal E_N}\prod_{i\in E}\xi_i .
		\]
		The key probabilistic insight is that $A_N$ has a Poissonian limit under the decay assumption for $p_n$. 
		
		\begin{lemma}\label{lem:Poisson}
			For every fixed \(K\ge1\), the following convergence in distribution holds:
			\begin{equation}
			(A_{N+1},\ldots,A_{N+K})
			\xrightarrow{D}
			(Z_1,\ldots,Z_K),
			\end{equation}
			where \(Z_1,\ldots,Z_K\) are independent Poisson random variables of mean $\lambda := \frac{C^d}{d!}\Gamma(1/d)^d$.
		\end{lemma}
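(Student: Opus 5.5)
We prove Lemma~\ref{lem:Poisson} by the method of factorial moments. For Poisson limits of sums of indicators the standard criterion is this: it suffices to show that for all fixed nonnegative integers $r_1,\ldots,r_K$,
\[
\mathbb E\prod_{\ell=1}^K (A_{N+\ell})_{r_\ell}\longrightarrow \lambda^{r_1+\cdots+r_K},
\]
where $(x)_r=x(x-1)\cdots(x-r+1)$ denotes the falling factorial. Since the Poisson law is determined by its moments, this yields joint convergence to independent Poisson variables.

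The factorial moment $\mathbb E\prod_\ell (A_{N+\ell})_{r_\ell}$ expands as a sum over ordered families of distinct sets. Each family has $r_\ell$ sets $E_{\ell,1},\ldots,E_{\ell,r_\ell}\in\mathcal E_{N+\ell}$, and the summand is $\mathbb E\prod_{i\in\bigcup E}\xi_i=\prod_{i\in\bigcup E}p_i$. Distinctness is automatic across different $\ell$, since sets in different $\mathcal E_{N+\ell}$ have different sums. We split the sum into two parts:
\begin{itemize}
\item[(a)] families whose sets are pairwise disjoint;
\item[(b)] families with at least one overlap.
\end{itemize}

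\emph{Disjoint families.} If we drop the disjointness constraint, the sum factorizes as $\prod_\ell W(N+\ell)^{r_\ell}$, where $W(M):=\sum_{E\in\mathcal E_M}w(E)$. Since $E$ is an unordered set of distinct indices,
\[
W(M)=\frac{1}{d!}\Bigl(U_d(M)-\sum_{\text{tuples with a repeated index}}\Bigr).
\]
A tuple with a repeated index satisfies a relation $m_1i_1+\cdots+m_ri_r=M$ with $r<d$ distinct variables. Bounding $p_i^2\le p_i$, such terms contribute $O(\sum V_{\mathbf m}(M))=o(1)$ by Lemma~\ref{lem:asympdfold}. Hence $W(M)\to \Gamma(1/d)^dC^d/d!=\lambda$, uniformly for shifts $M=N+\ell$ with $\ell\le K$. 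It remains to show that the non-disjoint families in this unconstrained product are negligible, which is exactly the overlap estimate of the next step.

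\emph{Overlapping families.} This is the main obstacle. Take a family of $R=\sum r_\ell$ sets whose union has fewer than $dR$ elements. Group the sets into connected components of the overlap graph; it suffices to show that each connected component with at least two sets contributes $o(1)$, while isolated sets contribute $O(1)$ each. For a connected component of $s\ge2$ sets, write the union as $u$ distinct indices, with $u\le ds-1$. The indices satisfy $s$ linear constraints, one sum equation per set. These constraints are independent, because each new set in a spanning-tree order introduces at least one fresh index; if it introduced none, we use instead that the component's indices are subject to a shared constraint. So, after solving the constraints, the free variables number $u-s$, each index is of size $\lesssim N$, and each index carries weight $p_i\asymp i^{-\alpha}$. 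A Riemann-sum estimate, as in Lemma~\ref{lem:asympdfold}, bounds the contribution by $N^{(u-s)-\alpha u}=N^{u/d-s}\le N^{(ds-1)/d-s}=N^{-1/d}\to0$. Integrability near the boundaries holds because $\alpha<1$. Small indices need care, since $p_i$ is not asymptotic there; they are handled by bounding $p_i\le 1$ and using the same exponent count with $i$ bounded, which gives an even smaller power of $N$. There are finitely many overlap patterns for fixed $R$ and $K$, so the total overlap contribution is $o(1)$. Consequently both the disjoint restricted sum and the full unconstrained product equal $\lambda^{R}+o(1)$.

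Combining the two steps, every joint factorial moment converges to $\lambda^{r_1+\cdots+r_K}$, which is the joint factorial moment of independent Poisson($\lambda$) variables, and the lemma follows. Throughout, uniformity under bounded shifts is used exactly as stated in Lemma~\ref{lem:asympdfold}.
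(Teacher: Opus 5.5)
Your overall architecture matches the paper's: factorial moments, the first moment via $U_d$ minus repeated-index tuples, factorization of the unconstrained sum into $\prod_\ell W(N+\ell)^{r_\ell}$, and moment determinacy. The overlap estimate, which you rightly identify as the main step, does not hold up as argued.

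Your power count $N^{u/d-s}$ rests on two claims: that $\prod_i x_i^{-\alpha}$ is integrable on the constraint polytope because $\alpha<1$, and that bounded indices only lower the exponent. Both fail for overlapping sets. Suppose $E\in\mathcal E_{N+a}$ and $F\in\mathcal E_{N+b}$ share $S$ with $|S|=s$. The sums of $E\setminus S$ and $F\setminus S$ differ by the constant $b-a$, so all $2(d-s)$ non-shared indices become small \emph{together}. The resulting singularity is non-integrable as soon as $2s\ge d$. Two concrete cases:
\begin{itemize}
\item For $d=3$, $E=\{x,y,z\}$, $F=\{x,y,z+1\}$: a single bounded value of $z$ already contributes of order $U_2(N)\asymp N^{-1/3}$, whereas your count gives $N^{4/3-2}=N^{-2/3}$.
\item For $d=2$, $E=\{a,N+1-a\}$, $F=\{N+1-a,a+1\}$: the sum over $a$ is of order $N^{-1/2}\log N$, which exceeds your claimed $N^{-1/d}$.
\end{itemize}
For a pair of sets, the regime where the non-shared indices are bounded contributes $N^{s/d-1}$. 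This dominates the bulk term $N^{-s/d}$ when $2s>d$, and it is why the paper separates $\sigma\le N/2$ from $\sigma>N/2$.

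There is a second problem with the count. The $s$ sum constraints of a component need not be independent: four pairs forming a $4$-cycle when $d=2$ give rank $3$. So the number of free variables is not $u-s$. For a pair the conclusion $o(1)$ happens to survive. For components of three or more sets, however, your sketch gives no valid control of these coupled boundary regimes.

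The missing idea is the paper's reduction to pairs. It proves only $\Delta_N(a,b)\to0$, and $\Delta_N^\circ(a,a)\to0$ for $E\neq F$, by summing over the sum $\sigma$ of the common indices. This bounds the overlap by $\sum_\sigma U_s(\sigma)U_{d-s}(N+a-\sigma)U_{d-s}(N+b-\sigma)$. It then uses the uniform extension bound $\sum_{E\in\mathcal E_M}\prod_{i\in E\setminus U}p_i\le B_{M_0}$ for all $|U|\le M_0$. That bound follows from $\sup_M U_q(M)<\infty$ after splitting according to $E\cap U$. Adding the remaining sets one at a time at bounded cost shows that every family containing an overlapping pair $\rho,\tau$ contributes at most $C_{d,R,K}\,\Delta_N$. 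Larger overlap components therefore never need to be analysed.

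One further omission concerns the unconstrained product $\prod_\ell W(N+\ell)^{r_\ell}$, where the same set can occur twice. For such terms the union weight sums to something of order one, and your fresh-index argument breaks down. You must keep the product weight $\prod_\rho w(E_\rho)$ and use $\max_{E}w(E)\to0$.
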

		
		The proof is quite straightforward as we essentially expand all factorial moments explicitly. The main idea is that under our scaling assumptions, overlaps between $A_{N+l}$ and $A_{N+k}$ are negligible resulting in the independence. The same mechanism also leads to the Poisson convergence for each vector component. Unfortunately,  this leads to quite lengthy combinatorial estimates.
		\begin{proof}
			Let us first check that the first moments coincide. By independence,
			$\mathbb \E A_N =
			\sum_{E\in\mathcal E_N}w(E)$.
			To relate this sum to $U_d(N)$ from Lemma~\ref{lem:asympdfold}, we note that the definition of $U_d(N)$ takes the ordering into account and includes repeated indices.  The ordered distinct \(d\)-tuples contribute \(d!\mathbb E A_N\). Moreover, tuples with repeated indices are
			bounded by finitely many sums of the form \(V_{\mathbf m}(N)\) with fewer
			than $d$ distinct variables.  By Lemma~\ref{lem:asympdfold}, the repeated part only contributes 
			\(o(1)\).  Hence
			\[
			d!\,\mathbb E A_N
			=
			U_d(N)+o(1)
			\to
			C^d\Gamma(1/d)^d,
			\]
			and therefore \(\mathbb E A_N\to\lambda\).
			
			To control higher joint moments, we need to show that overlaps are negligible.  We fix two distinct integers $a \neq b \in \Z$ and
			claim that
			\begin{equation} \Delta_N(a,b) :=
			\sum_{\substack{E\in\mathcal E_{N+a},\,
					F\in\mathcal E_{N+b}\\
					E\cap F\ne\emptyset}}
			\prod_{i\in E\cup F}p_i
			\longrightarrow 0 .
			\end{equation}
			If $a$ or $b$ is negative, we only consider large enough $N$ such that $\Delta_N$ is well-defined. Note that $|E \cap F| = d$ is impossible since $a \neq b$ 
			It suffices to consider pairs with \(|E\cap F|=s\), where
			\(1\le s\le d-1\).  Let \(S=E\cap F\), \(|S|=s\), and write
			\(\sigma=\sum_{i\in S}i\).  If we ignore that $F \setminus S \cap E \setminus S = \emptyset$,  we only increase the number of possible $d$-elements.
			Thus the contribution for this value of \(s\) is bounded by
			\[ 	\sum_{\substack{E\in\mathcal E_{N+a},\,
					F\in\mathcal E_{N+b}\\
					|E\cap F|\ne\emptyset}}
			\prod_{i\in E\cup F}p_i
			\leq \sum_{\sigma} U_s(\sigma)\, U_{d-s}(N+a-\sigma)\, U_{d-s}(N+b-\sigma),
			\]
			where we set $U_k(x) \equiv 0$ if $x < 0$.  By Lemma~\ref{lem:asympdfold}
			\[ U_s(\sigma) = O(\sigma^{s/d-1}), \qquad U_{d-s}(M) = O(M^{-s/d}).
			\]
			The contribution of small values of $\sigma$ and $N + a - \sigma$ is negligible and hence there is some absolute constant $D$ such that for $N$ large enough 
			\[ \sum_{\sigma} U_s(\sigma)\, U_{d-s}(N+a-\sigma)\, U_{d-s}(N+b-\sigma) \leq D  \sum_{\sigma = 1}^{N} \sigma^{s/d-1} (1+N-\sigma)^{-s/d} (1+N-\sigma)^{-s/d}.
			\]
			We split the sum into \(\sigma\le N/2\) and \(\sigma>N/2\). The first contribution is easily estimated by
			\[	\sum_{\sigma\le N/2} \sigma^{s/d-1}N^{-2s/d} = O(N^{-s/d}),
			\]
			whereas
			\[
			\sum_{\sigma>N/2}
			N^{s/d-1}(1+N-\sigma)^{-2s/d}
			=
			\begin{cases}
				O(N^{-s/d}), & 2s<d,\\
				O(N^{-1/2}\log N), & 2s=d,\\
				O(N^{s/d-1}), & 2s>d.
			\end{cases}
			\]
			All three bounds tend to \(0\).  Hence \(\Delta_N(a,b)\to0\). Note that for $a = b$, we still have by the same argument
			\[ \Delta_N^{\circ}(a,a) :=
			\sum_{\substack{E\in\mathcal E_{N+a},\,
					F\in\mathcal E_{Nab}\\
					E\cap F\ne\emptyset}, \, E \neq F}
			\prod_{i\in E\cup F}p_i
			\longrightarrow 0
			\]
			if we exclude the diagonal term.
			
		 We move to our specific case of interest. It is convenient to 
	    abbreviate for 
		\(a\in\{1,\ldots,K\}\)
		\[
		\mathcal E_a(N):=\mathcal E_{N+a},
		\qquad
		A_a(N):=A_{N+a}.
		\]
		Thus
		\[
		A_a(N)=\sum_{E\in\mathcal E_a(N)} X_E,
		\qquad
		X_E:=\prod_{i\in E}\xi_i.
		\]
		Then $w(E):=\mathbb E X_E=\prod_{i\in E}p_i$ and we have already shown that $S_a(N):=\sum_{E\in\mathcal E_a(N)}w(E) \to \lambda$
	    for each fixed \(a\). Let us record two further elementary
		facts.
		First,
		\begin{equation}
		\max_{E\in\mathcal E_a(N)} w(E)\to0.
		\label{eq:max-weight-zero}
		\end{equation}
		is immediate
		
		Second, we need a uniform extension bound.   A direct consequence of Lemma~\ref{lem:asympdfold} is the estimate
		diagonal estimates proved above imply
		\[
		\sup_{M\in\mathbb Z} U_q(M)<\infty
		\qquad (0\le q\le d).
		\]
		Consequently, a simple decomposition implies that for every fixed integer \(M_0\ge0\), there is a constant
		\(B_{M_0}<\infty\) such that, for every set \(U\subset\mathbb N\) with
		\(|U|\le M_0\),
		\begin{equation}
		\sum_{E\in\mathcal E_a(N)} \prod_{i\in E\setminus U}p_i \leq B_{M_0},
		\label{eq:extension-bound-clean}
		\end{equation}
		uniformly in \(N\) and \(a\in\{1,\ldots,K\}\).  
		
	     We now finish the proof of the joint Poisson convergence by computing joint
		factorial moments.  We write $(x)_r=x(x-1)\cdots(x-r+1)$ for any real number $x$.  Fix non-negative integers
		\(r_1,\ldots,r_K\), and we define $R=r_1+\cdots+r_K$.
		Let
		\[
		\mathcal P
		:=
		\{(a,j):1\le a\le K,\ 1\le j\le r_a\}.
		\]
		For \(\rho=(a,j)\in\mathcal P\), we write \(a(\rho)=a\).
		An application of the inclusion-exclusion principle shows
		\[
		(A_a(N))_{r_a}=
		\sum_{\substack{E_{a,1},\ldots,E_{a,r_a}\in\mathcal E_a(N)\\
				E_{a,1},\ldots,E_{a,r_a}\ \text{distinct}}}
		X_{E_{a,1}}\cdots X_{E_{a,r_a}}.
		\]
		Hence, we may write the product of interests as
		\[
		\prod_{a=1}^K (A_a(N))_{r_a}
		=
		\sum_{\mathbf E\in\Omega_N}
		\prod_{\rho\in\mathcal P} X_{E_\rho},
		\]
		where \(\Omega_N\) is the set of all choices
		\[
		\mathbf E=(E_\rho)_{\rho\in\mathcal P},
		\qquad
		E_\rho\in\mathcal E_{a(\rho)}(N),
		\]
		such that for each fixed \(a\), the edges $E_{(a,1)},\ldots,E_{(a,r_a)}$
		are distinct.  Therefore
		\begin{equation}
		\mathbb E\prod_{a=1}^K (A_a(N))_{r_a}v= \sum_{\mathbf E\in\Omega_N}
		\prod_{i\in\bigcup_{\rho\in\mathcal P}E_\rho}p_i.
		\label{eq:factorial-expansion-clean}
		\end{equation}
		By the above overlap estimate, one expects that only disjoint configuration contribute in the limit $N \to \infty$. 
		We call such a configuration \(\mathbf E\in\Omega_N\) good, that is, $E$ is \textit{good} if all its selected
		edges \(E_\rho\), \(\rho\in\mathcal P\), are pairwise disjoint.  Otherwise
		we call it \textit{bad}.  
		
		We first show that the contribution of bad configurations
		to \eqref{eq:factorial-expansion-clean} is \(o(1)\).
		Fix two distinct positions \(\rho,\tau\in\mathcal P\).  We estimate the
		contribution of configurations with
		$E_\rho\cap E_\tau\ne\emptyset$.
		Choose \(E_\rho\) and \(E_\tau\) first.  Their contribution to the union weight is
		$\prod_{i\in E_\rho\cup E_\tau}p_i$.
		Now add the remaining \(R-2\) edges one at a time.  If the current union of
		previously chosen edges is \(U\), then adding a new edge \(E\) multiplies
		the union weight by
		\[
		\prod_{i\in E\setminus U}p_i.
		\]
		Since the total number of chosen vertices is always at most \(dR\), the
		extension bound \eqref{eq:extension-bound-clean}, with \(M_0=dR\), shows
		that the total contribution of each additional edge is bounded by a
		constant depending only on \(d,R,K\).  Hence
		\[
		\begin{aligned}
			\sum_{\substack{\mathbf E\in\Omega_N\\
					E_\rho\cap E_\tau\ne\emptyset}}
			\prod_{i\in\bigcup_{\sigma\in\mathcal P}E_\sigma}p_i
			&\leq C_{d,R,K}
			\Delta_N(a(\rho),a(\tau))
			=
			o(1).
		\end{aligned}
		\]
		if $a(\rho) \neq a(\tau)$ and otherwise the bound has to be replaced by $\Delta_N^{\circ}$.
		There are only finitely many pairs \(\rho,\tau\).  Therefore the full
		contribution of bad configurations is \(o(1)\).  Thus
		\begin{equation}
		\mathbb E\prod_{a=1}^K (A_a(N))_{r_a}
		= \sum_{\substack{\mathbf E\in\Omega_N\\ \mathbf E\ \mathrm{good}}}
		\prod_{\rho\in\mathcal P}w(E_\rho) +o(1).
		\label{eq:good-configurations-clean}
		\end{equation}
		
		It remains to evaluate the good sum.  Consider the unrestricted product $\prod_{a=1}^K S_a(N)^{r_a}$.
		Expanding it gives
		\[
		\prod_{a=1}^K S_a(N)^{r_a} =
		\sum_{\mathbf E\in\Omega_N^{\mathrm{all}}} \prod_{\rho\in\mathcal P}w(E_\rho),
		\label{eq:unrestricted-product-clean}
		\]
		where \(\Omega_N^{\mathrm{all}}\) is the set of all choices
		\(E_\rho\in\mathcal E_{a(\rho)}(N)\), with no distinctness or disjointness
		condition. A similar argument as for \eqref{eq:good-configurations-clean} shows that bad configurations and non-distinct configurations are again negligible due to the overlap estimate. We arrive at
		\begin{equation}
		\sum_{\substack{\mathbf E\in\Omega_N\\ \mathbf E\ \mathrm{good}}}
		\prod_{\rho\in\mathcal P}w(E_\rho)
		= \prod_{a=1}^K S_a(N)^{r_a}
		+o(1).
		\label{eq:good-sum-product-clean}
		\end{equation}
		
		Combining \eqref{eq:good-configurations-clean} and
		\eqref{eq:good-sum-product-clean}, we obtain
		\[
		\mathbb E\prod_{a=1}^K (A_a(N))_{r_a}
		=
		\prod_{a=1}^K S_a(N)^{r_a}
		+o(1).
		\]
		Since \(S_a(N)\to\lambda\), it follows that
		\[
		\mathbb E\prod_{a=1}^K (A_{N+a})_{r_a}
		\longrightarrow
		\prod_{a=1}^K \lambda^{r_a}.
		\]
		These are exactly the joint factorial moments of \(K\) independent
		Poisson random variables with common mean \(\lambda\). The Poisson moments satisfy the Carleman condition and thus uniquely determine its distribution. Hence, the claimed convergence in distribution follows.
		\end{proof}
		
		In our next result, we exploit the Poisson convergence to deduce a non-trivial limit of the Fourier transform of $Q(X)$ along the dyadic sequence $2^N$.
		\begin{prop}\label{prop:fourier-obstruction}
			Let \(Q\in\mathbb Z[x]\) be an integer-valued polynomial of  degree $d$, say
			\[ Q(x)=a_dx^d+a_{d-1}x^{d-1}+\cdots+a_0, \qquad a_d\ne0.
			\]
			Then
			\begin{equation}
			\lim_{N\to\infty}
			\mathbb E\, e(2^NQ(X))=L_Q,
			\end{equation}
			where
			\[
			L_Q
			=
			\prod_{\ell=1}^{\infty}
			\exp\left(
			\lambda\bigl(e(a_dd!2^{-\ell})-1\bigr)
			\right).
			\]
			In particular, $0<|L_Q|<1$.
		\end{prop}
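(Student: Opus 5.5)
We will compute $2^NQ(X)$ modulo $1$ and isolate the part that is asymptotically driven by the diagonal counts $A_{N+\ell}$ from Lemma~\ref{lem:Poisson}. Expand $X^k=\sum_{i_1,\ldots,i_k}\xi_{i_1}\cdots\xi_{i_k}2^{-(i_1+\cdots+i_k)}$ and multiply by $2^N$. Since $Q$ has integer coefficients, every monomial with $i_1+\cdots+i_k\le N$ contributes an integer and can be dropped. Hence modulo $1$,
\[
2^NQ(X)\equiv \sum_{k=1}^d a_k\sum_{\substack{i_1,\ldots,i_k\\ i_1+\cdots+i_k>N}}\xi_{i_1}\cdots\xi_{i_k}2^{N-(i_1+\cdots+i_k)} .
\]
We split the sum into three parts. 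The \emph{main part} consists of the $k=d$ terms with pairwise distinct indices and $i_1+\cdots+i_d=N+\ell$ for $1\le\ell\le K$; grouping ordered tuples into sets gives exactly $a_dd!\sum_{\ell=1}^K A_{N+\ell}2^{-\ell}$. The \emph{tail part} consists of the $k=d$ distinct-index terms with sum exceeding $N+K$. The \emph{error part} consists of all terms with $k<d$ or with repeated indices. The overall strategy is that the tail part is small in $L^1$ uniformly in $N$ once $K$ is large, and the error part tends to $0$ in $L^1$.

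For the error part, a repeated-index term of degree $d$ collapses to a product over $r<d$ distinct variables with multiplicities $m_j$, because $\xi_i^m=\xi_i$. Its expected absolute contribution is therefore bounded by $\sum_{\ell\ge1}2^{-\ell}V_{\mathbf m}(N+\ell)$. Lemma~\ref{lem:asympdfold} gives $V_{\mathbf m}(M)=O(M^{r/d-1})$. Combined with $\sup_M V_{\mathbf m}(M)<\infty$, dominated convergence shows this sum tends to $0$. The same argument, using $U_k(M)\to0$ for $k<d$, handles the lower-degree terms. For the tail part, $\E\bigl|a_dd!\sum_{\ell>K}A_{N+\ell}2^{-\ell}\bigr|\le |a_d|d!\sup_M\E A_M\,2^{-K}$, which is uniformly $O(2^{-K})$.

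Since $|e(x)-e(y)|\le2\pi|x-y|$, these two estimates give
\[
\limsup_{N}\Bigl|\E e(2^NQ(X))-\E e\Bigl(a_dd!\textstyle\sum_{\ell=1}^K A_{N+\ell}2^{-\ell}\Bigr)\Bigr|\le C2^{-K}.
\]
By Lemma~\ref{lem:Poisson}, with $e(\cdot)$ bounded and continuous, the inner expectation converges to $\prod_{\ell=1}^K\E e(a_dd!2^{-\ell}Z_\ell)$. The Poisson characteristic function turns this product into $\prod_{\ell=1}^K\exp(\lambda(e(a_dd!2^{-\ell})-1))$. This partial product converges to $L_Q$ as $K\to\infty$, since $|e(x)-1|\le2\pi|x|$ makes $\sum_\ell|e(a_dd!2^{-\ell})-1|$ converge. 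Letting $K\to\infty$ then identifies the limit.

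Finally we check $0<|L_Q|<1$. We have
\[
|L_Q|=\exp\Bigl(-\lambda\sum_\ell(1-\cos(2\pi a_dd!2^{-\ell}))\Bigr).
\]
The series converges and is finite, so $|L_Q|>0$. Each summand is nonnegative. For large $\ell$ the number $a_dd!2^{-\ell}$ is nonzero and lies in $(0,1/2)$ in absolute value, so the corresponding summand is strictly positive and $|L_Q|<1$.

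I expect the main obstacle to be the bookkeeping in the error part, namely verifying the uniform bounds $\sup_M U_k(M),\sup_M V_{\mathbf m}(M)<\infty$ together with the decay needed for dominated convergence. A secondary point is a careful justification of the reduction modulo $1$ for integer coefficients. If $Q$ is only integer-valued rather than in $\Z[x]$, one would first multiply by a common denominator.
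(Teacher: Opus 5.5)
Your proposal is correct and is essentially the paper's proof. It uses the same reduction of $2^NQ(X)$ modulo $1$, the same splitting of the degree-$d$ terms into $d!A_{N+\ell}$ plus an $L^1$-negligible repeated-index part, the same use of Lemma~\ref{lem:asympdfold} for the lower-degree terms, a uniform $O(2^{-K})$ tail, and Lemma~\ref{lem:Poisson} for the truncated main part; your Lipschitz bound $|e(x)-e(y)|\le 2\pi|x-y|$ with an explicit $\limsup_N\le C2^{-K}$ step is a tidier version of the paper's truncation argument. One caveat: drop your closing remark about integer-valued $Q\notin\mathbb Z[x]$, since multiplying by a common denominator that is not a power of $2$ changes the frequencies $2^N$ and hence the quantity being computed (the statement assumes $Q\in\mathbb Z[x]$, so this is not needed).
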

		
		\begin{proof}
			We start from the simple identity for the monomial
			$$ 	X^k=\sum_{M\geq k} B_M^{(k)}2^{-M}$$
			with 
			\[ B_M^{(k)} =
			\sum_{\substack{i_1,\ldots,i_k\ge1\\
					i_1+\cdots+i_k=M}} \xi_{i_1}\cdots\xi_{i_k}
			\]
			for $1 \leq k \leq d$. Note that is just a binomial expansion and in particular $B_M^{(k)}$ does not need to coincide with the binary digits. 
			We still have modulo \(1\),
			\begin{equation}
			2^NQ(X)\equiv \sum_{\ell\ge1}2^{-\ell} \sum_{k=1}^{d} a_k B_{N+\ell}^{(k)}
			\pmod 1,
			\end{equation}
			because the terms with \(M\le N\) give integers.
			
			If \(k<d\), then \(\mathbb E B_M^{(k)}\to0\). This follows from the observation that $B_M^{(k)}$ can be written as finite linear combination in terms of 
			\(V_{\mathbf m}(M)\) with fewer than \(d\) distinct variables. 
			By Lemma~\ref{lem:asympdfold} these terms converge to $0$ and  are uniformly bounded. \\
			For \(k=d\), one may revisit the proof of Lemma~\ref{lem:Poisson} to see that  
			\[ B_M^{(d)}=d!A_M+R_M, \qquad \mathbb E|R_M|\to0.\]
			Therefore, for each fixed integer $K$, Lemma~\ref{lem:Poisson} further implies 
			\[ \left(B_{N+1}^{(d)},\ldots,B_{N+K}^{(d)}\right)
			\xrightarrow{D} \left(d!Z_1,\ldots,d!Z_K\right),
			\]
			where \(Z_1,\ldots,Z_K\) are independent Poisson variables with parameter $\lambda := \frac{C^d}{d!}\Gamma(1/d)^d$.
			
			Since the whole vector $\left(B_{N+1}^{(d)},\ldots,B_{N+K}^{(d)}\right)$ converges in distribution, the Cramer-Wold device implies that this also holds for all linear combinations of the vector components.   It follows that the finite truncations satisfy
			\begin{equation}
			\sum_{\ell=1}^{K}2^{-\ell}
			\sum_{k=1}^{d} a_k B_{N+\ell}^{(k)}
			\xrightarrow{D} a_dd!\sum_{\ell=1}^{K}2^{-\ell}Z_\ell .
		    \end{equation}
			Since the expectations of
			\(B_M^{(k)}\) are uniformly bounded for $1\le k\le d$,
			we obtain a uniform tail estimate of the form
			\[ \sup_N \mathbb E \sum_{\ell>K}2^{-\ell} \sum_{k=1}^{d}|a_k|B_{N+\ell}^{(k)}
			\leq D  \sum_{\ell>K}2^{-\ell} \to 0, \]
			where $D$ is an absolute constant depending on $Q$ and the decay rate $C$. 
			In total, we obtain the convergence in distribution for the dyadic series 
			\begin{equation}
			2^NQ(X) \pmod1 \xrightarrow{D}
			a_dd!\sum_{\ell\ge1}2^{-\ell}Z_\ell \pmod1.
			\end{equation}
			Since \(e(\cdot)\) is a continuous function on $\mathbb R/\mathbb Z$, the definition of weak convergence directly yields
			\begin{align*}
				\lim_{N\to\infty}\mathbb E\,e(2^NQ(X))
				&=
				\mathbb E\,
				e\left(a_dd!\sum_{\ell\ge1}2^{-\ell}Z_\ell\right)  
				=
				\prod_{\ell=1}^{\infty}
				\mathbb E\,e(a_dd!2^{-\ell}Z_\ell),
			\end{align*}
			where we used the independence in the last step
			For a Poisson\((\lambda)\) variable $ \mathbb E e(tZ)
			= \exp\bigl(\lambda(e(t)-1)\bigr)$.
			Hence
			\[ L_Q = \prod_{\ell=1}^{\infty}
			\exp\left(
			\lambda\bigl(e(a_dd!2^{-\ell})-1\bigr)
			\right). \]
			The product defining $L_Q$ converges absolutely because
			$ \sum_{\ell\ge1}|e(a_dd!2^{-\ell})-1|<\infty$.
			In particular, \(L_Q\ne0\).  For large enough $\ell$, we have
			\(e(a_dd!2^{-\ell})\ne1\) and this yields $|L_Q|<1$.
		\end{proof}
		
		The Fourier obstruction in form of Proposition~\ref{prop:fourier-obstruction} is now the key to show the non-normality of $Q(X)$. Indeed, in view of Weyl's criterion $Q(X)$ being normal almost surely would imply by dominated convergence that $L_Q = 0$. This simple argument does not exclude, however, the possibility that $Q(X)$ is normal with probability less than one. To show the almost sure non-normality, we employ a martingale argument.
		
		\begin{proof}[Proof of Theorem~\ref{thm:necessary}]
			We use the standard fact that multiplication by a non-zero rational number preserves normality to a fixed integer base. Hence, we may restrict ourselves to $Q \in \Z[X]$ of degree $d$.  
			Let \(a_d\) be the leading coefficient of \(Q\), and let \(L_Q\ne0\) be the non-zero constant from Proposition~\ref{prop:fourier-obstruction}.
			
			To make use of the martingale convergence theorem later, we record a conditional version of Proposition~\ref{prop:fourier-obstruction}. Consider the for $m \in \N$ the generated $\sigma$-algebra
			$ \mathcal F_m=\sigma(\xi_1,\ldots,\xi_m)$.
			Conditioned on \(\mathcal F_m\), the random number $X$ takes the form
			\[
			X=2^{-m}(K+Z_m),
			\]
			where \(K\in\{0,\ldots,2^m-1\}\) is a fixed integer and
			$Z_m=\sum_{n\ge1}\xi_{m+n}2^{-n}$ is independent from $\mathcal{F}_m$ 
			with tail variables \((\xi_{m+n})_{n\ge1}\) are independent and our decay assumptions reads
			$\mathbb P(\xi_{m+n}=1)=p_{m+n}\sim Cn^{-\alpha}$.
			The conditioning gives naturally rise to a polynomial acting on $Z$ as follows
			\begin{equation}
			Q_{m,K}(z)= 2^{md}Q\left(\frac{K+z}{2^m}\right).
			\end{equation}
			Then \(Q_{m,K}\in\mathbb Z[z]\).  Indeed, if
			\(Q(x)=\sum_{r=0}^d a_rx^r\), then
			\[
			2^{md}Q\left(\frac{K+z}{2^m}\right)
			=
			\sum_{r=0}^d a_r2^{m(d-r)}(K+z)^r,
			\]
			which has integer coefficients.  Moreover, the leading coefficient is again \(a_d\). This allows us to apply 
		    Proposition~\ref{prop:fourier-obstruction} conditionally to the tail variable \(Z_m\) and the polynomial
			\(Q_{m,K}\). We get, almost surely,
			\begin{equation*}
			\lim_{N\to\infty}
			\mathbb E\left[ e\left(2^NQ_{m,K}(Z_m)\right) \,\middle|\,\mathcal F_m \right] = L_Q .
			\end{equation*}
			In particular, by construction we have 
			\[ \lim_{N\to\infty} \mathbb E\left[ e\left(2^{md}2^N Q(X)\right)
			\,\middle|\,\mathcal F_m
			\right] = L_Q .\]
			By Cesaro averaging, we also have 
			\begin{equation}\label{eq:cesaro} \lim_{N\to\infty} \mathbb E\left[
			\frac1N\sum_{n=1}^N e\left(2^{md}2^nQ(X)\right)
			\,\middle|\,\mathcal F_m
			\right] = L_Q .
			\end{equation}
			
			Now, we conclude the proof. Let \(\mathcal N_2\) be the event that \(Q(X)\) is normal in base $2$.
			Under the event \(\mathcal N_2\), Weyl's criterion gives, for every non-zero integer \(h\), the convergence
			$\frac1N\sum_{n=1}^N e(h2^nQ(X))\to0.$
			In particular,
			\[ A_N^{(m)} := \frac1N\sum_{n=1}^N e(2^{md}2^nQ(X))
			\to0 \qquad\text{on }\mathcal N_2.\]
			By conditional dominated convergence,
			\[ \mathbb E\left[ A_N^{(m)}\mathbf 1_{\mathcal N_2}
			\,\middle|\,\mathcal F_m \right] \to0. \]
			By \eqref{eq:cesaro}, we obtain on the complementary event
			\[ \mathbb E\left[A_N^{(m)}\mathbf 1_{\mathcal N_2^c}\,\middle|\,\mathcal F_m \right] \to L_Q. \]
			Therefore, almost surely $ |L_Q| \leq \mathbb P(\mathcal N_2^c\mid\mathcal F_m)$ or, equivalently,
			\[ \mathbb P(\mathcal N_2\mid\mathcal F_m)
			\leq 1-|L_Q| \qquad\text{a.s.}\]
			and this holds for every natural number $m$.  Since
			\(\mathcal N_2\) is measurable with respect to
			\(\sigma(\xi_1,\xi_2,\ldots)\), Levy's martingale convergence theorem gives
			\[ \mathbb P(\mathcal N_2\mid\mathcal F_m)
			\to \mathbf 1_{\mathcal N_2} \qquad\text{a.s.}
			\]
			The last two displays are almost sure statements on the random variable $\mathbf 1_{\mathcal N_2}$ and contradict each other (since $|L_Q|>0$) unless
			\[ \mathbf 1_{\mathcal N_2}=0 \qquad\text{a.s.}
			\]
			Thus, \(Q(X)\) is almost surely not normal in base $2$.
		\end{proof}
		
		\paragraph{Acknowledgments.} The author thanks Christoph Aistleitner for an initial idea, which ultimately lead to this work. This work was funded by the Deutsche Forschungsgemeinschaft (DFG, German Research Foundation) - 558731723.

		\end{document}